\numberwithin{equation}{section}
\numberwithin{figure}{section}
\theoremstyle{plain}
\newtheorem{thm}{\protect\theoremname}[section]
\theoremstyle{definition}
\newtheorem{defn}[thm]{\protect\definitionname}
\theoremstyle{remark}
\newtheorem*{acknowledgement*}{\protect\acknowledgementname}
\theoremstyle{remark}
\newtheorem{rem}[thm]{\protect\remarkname}
\theoremstyle{plain}
\newtheorem{cor}[thm]{\protect\corollaryname}
\theoremstyle{plain}
\newtheorem{prop}[thm]{\protect\propositionname}
\numberwithin{equation}{section}
\numberwithin{figure}{section}
 \let\footnote=\endnote
\theoremstyle{theorem}
\newtheorem{thmx}{Theorem}
\def\s{\sigma}
\def\SI{\Sigma}
\def\G{\Gamma}
\def\g{\gamma}
\def\L{\Lambda}
\def\dd{\mathrm{d}}
\def\D{\mathbb{D}}
\def\R{\mathbb{R}}
\def\C{\mathbb{C}}
\def\H{\mathbb{H}}
\def\N{\mathbb{N}}
\def\Z{\mathbb{Z}}
\def\ep{\varepsilon}
\def\cal{\mathcal}
\address{Department of Mathematics, The University of Chicago, Chicago, IL 60637, USA} 
\email{lkao@math.uchicago.edu}
\keywords{}
\subjclass[2000]{}
\theoremstyle{theorem}
\newtheorem{DEFN-THM}[thm]{Definition/Theorem}
\def\s{\sigma}
\def\SI{\Sigma}
\def\G{\Gamma}
\def\g{\gamma}
\def\L{\Lambda}
\def\dd{\,\mathrm{d}}
\def\D{\mathbb{D}}
\def\R{\mathbb{R}}
\def\C{\mathbb{C}}
\def\H{\mathbb{H}}
\def\N{\mathbb{N}}
\def\Z{\mathbb{Z}}
\def\ep{\varepsilon}
\def\cal{\mathcal}
\def\psl{\mathrm{PSL}}
\def\vbdy{\partial_{\infty}}
\def\TMS{\Sigma^{+}}
\def\psltr{\mathrm{PSL}(2,\R)}
\def\vp{\varphi}
\providecommand{\acknowledgementname}{Acknowledgement}
\providecommand{\corollaryname}{Corollary}
\providecommand{\definitionname}{Definition}
\providecommand{\propositionname}{Proposition}
\providecommand{\remarkname}{Remark}
\providecommand{\theoremname}{Theorem}
\begin{document}

\title[Pressure Metrics for Teichm\"uller Spaces of Punctured Surfaces ]{Pressure Metrics and Manhattan Curves for Teichm\"uller Spaces
of Punctured Surfaces }

\author{Lien-Yung Kao}

%\date{\today}
\begin{abstract}
In this paper, we extend the construction of pressure metrics to Teichm\"uller
spaces of surfaces with punctures. This construction recovers Thurston's
Riemannian metric on Teichm\"uller spaces. Moreover, we prove the
real analyticity and convexity of Manhattan curves of finite area
type-preserving Fuchsian representations, and thus we obtain several related
entropy rigidity results. Lastly, relating the two topics mentioned
above, we show that one can derive the pressure metric by varying
Manhattan curves.
\end{abstract}

\maketitle

\tableofcontents{}

\section{Introduction\label{sec:Introduction}}

Let $S=S_{g,n}$ be an orientable surface of genus $g$ and $n$ punctures
with negative Euler characteristic. In this paper, we discuss how
one can characterize Fuchsian representations and the geometry of
${\cal T}(S)$ the Teichm\"uller space of $S$ by studying dynamics
objects associated with them. For example, we prove rigidity results
via examining the shape of Manhattan curves, and we construct a Riemannian
metric on ${\cal T}(S)$ by derivatives of pressure. 

When $S$ has no puncture, results in this work are not new. Manhattan
curves and rigidity results are, for instance, discussed in \cite{Burger:1993wb,Sharp:1998if},
and the pressure metric on ${\cal T}(S)$ is discovered in \cite{McMullen:2008eh}
and further investigated in \cite{Pollicott:2014uk,Bridgeman:2017jy}.
Nevertheless, when $S$ has punctures, especially when Fuchsian representations
are not convex co-compact, much less results along this line are proved.
Indeed, in such cases, their dynamics are much more complicated because
the presence of parabolic elements. 

Using a similar idea in \cite{Ledrappier:2008wq,Kao:2018th}, we study
geodesics flows over hyperbolic surfaces with cusps by countable state
Markov shifts and corresponding suspension flows. Notice that for
countable state Markov shifts, different from compact cases, for unbounded
potentials without sufficient control of their regularity and values
around cusps, the pressure of their perturbation might not only lose
the analyticity but also information of some thermodynamics data.
For example, time changes for suspension flows over a non-compact
Markov shift may not take equilibrium states to equilibrium states
for some potentials (cf. \cite{Cipriano:2018wz}). 

To overcome these issues, we carefully study the associated geometric
potential (or the roof function of the suspension flow). By doing
so, we know exactly where the pressure function (of geometric potentials
and their weighted sums) is analytic. Thus, we can mimic the procedure
used in compact cases. More precisely, we derive a version of Bowen's
formula which relating the topological entropy of the geodesic flow
and the corresponding roof function. With Bowen's formula and the
analyticity of pressure, we prove the convexity of Manhattan curves,
and using the second derivative of pressure we construct a Riemannian
metric on ${\cal T}(S)$. 

To put our results in context, we now introduce necessary notations
and definitions. Recall that a representation $\rho\in{\rm Hom}(\pi_{1}S,\psltr)$
is \textit{Fuchsian} if it is discrete and faithful, and $\rho$ has
\textit{finite area} if the hyperbolic surface $X_{\rho}=\rho(\pi_{1}S)\backslash\H$
has finite area. We say two finite area Fuchsian representations $\rho_{1},\rho_{2}$
are \textit{type-preserving} if there exists an isomorphism $\iota:\rho_{1}(\pi_{1}S)\to\rho_{2}(\pi_{1}S)$
sending parabolic elements to parabolic elements and hyperbolic elements
to hyperbolic elements. Here ${\rm PSL}(2,\R)$ refers to the space
of orientation preserving isometries of the hyperbolic plane $\H$.

Let $\rho_{1}$ and $\rho_{2}$ be two Fuchsian representations. Recall
that $d_{\rho_{1},\rho_{2}}^{a,b}$ the \textit{weighted Manhattan
metric} on $\H\times\H$ with respect to $\rho_{1},\rho_{2}$ is given
by, fixing $o=(o_{1},o_{2})$, $d_{\rho_{1},\rho_{2}}^{a,b}(o,\g o):=ad(o_{1},\rho_{1}(\g)o_{1})+bd(o_{2},\rho_{1}(\g)o_{2})$
for $\g\in\pi_{1}(S)$ where $d$ is the hyperbolic distance on $\H$.
Notice that we only interested in non-negative weights, i.e., $a,b\geq0$
and $ab\neq0$. We denote the associated \textit{Poincar\'e series}
by
\[
Q_{\rho_{1},\rho_{2}}^{a,b}(s):={\displaystyle \sum_{\g\in\pi_{1}(S)}e^{-s\cdot d_{\rho_{1},\rho_{2}}^{a,b}(o,\g o)}.}
\]

\begin{defn}
[Manhattan Curve]The Manhattan curve ${\cal C}(\rho_{1},\rho_{2})$
of $\rho_{1}$, $\rho_{2}$ is given by
\[
{\cal C}(\rho_{1},\rho_{2}):=\{(a,b)\in\R_{\geq0}\times\R_{\geq0}\backslash(0,0):\ \delta_{\rho_{1},\rho_{2}}^{a,b}=1\}
\]
where $\delta_{\rho_{1},\rho_{2}}^{a,b}$ is the \textit{critical
exponent} of $Q_{\rho_{1},\rho_{2}}^{a,b}(s)$, i.e., $Q_{\rho_{1},\rho_{2}}^{a,b}(s)$
is divergent if $s<\delta_{\rho_{1},\rho_{2}}^{a,b}$ and is convergent
if $s>\delta_{\rho_{1},\rho_{2}}^{a,b}$.
\end{defn}

By definition, one can regard ${\cal C}(\rho_{1},\rho_{2})$ as a
generalization of the critical exponents for $\rho_{1}$ and $\rho_{2}$.
Obviously, taking $a=0$ (respectively, $b=0$), $\delta_{\rho_{1},\rho_{2}}^{a,b}$
reduces to $\delta_{\rho_{1}}$ the classical critical exponent for
$\rho_{1}$ (respectively, $\delta_{\rho_{2}}$). By Otal and Peign\'e
\cite{Otal:2004fn}, we know $\delta_{\rho_{1}}$ is also the topological
entropy of the geodesic flow over $X_{\rho_{1}}$. 

As mentioned above, using a symbolic model given in \cite{Ledrappier:2008wq},
for every finite area Fuchsian representation $\rho$, we can code
the geodesic flow over $X_{\rho}$. Elaborated discussion of the coding
of geodesic flows is in Section \ref{sec:Geodesic-Flows-Symbolic}.
We briefly introduce the idea and strategy below. We will associate
the geodesic flow on the smaller special section $\Omega_{0}\subset T^{1}X_{\rho}$
with a suspension flow $(\TMS,\sigma,\tau_{\rho})$ where $(\TMS,\sigma)$
is a countable state Markov shift and $\tau_{\rho}:\TMS\to\R^{+}$
is the roof function. Furthermore, by the construction, the roof function
$\tau_{\rho}$ is a continuous function prescribing the length of
closed geodesics. We sometimes call $\tau_{\rho}$ the \textit{geometric
potential} of $\rho$. Moreover, one important feature of this symbolic
model is that if $\rho_{1},\rho_{2}$ are finite area type-preserving
Fuchsian representations, then they correspond to the same Markov
shift $(\TMS,\sigma)$ but to different roof functions $\tau_{\rho_{1}},\tau_{\rho_{2}}$.
In other words, we can use roof functions to characterize finite area
type-preserving Fuchsian representations.

Using this symbolic model, we can characterize ${\cal C}(\rho_{1},\rho_{2})$
as solutions of a version of Bowen's formula. Furthermore, we derive
the first main result of the paper:

\begin{thmx}\label{Thm:manhattan-strictly-convex}

Let $\rho_{1}$, $\rho_{2}$ be two finite area type-preserving Fuchsian
representations. Then ${\cal C}(\rho_{1},\rho_{2})$ is a real analytic
curve, and ${\cal C}(\rho_{1},\rho_{2})$ is strictly convex unless
$\rho_{1}$ and $\rho_{2}$ are conjugate in ${\rm PSL}(2,\R)$, in
such cases ${\cal C}(\rho_{1},\rho_{2})$ is a straight line.

\end{thmx}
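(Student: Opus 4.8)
The plan is to realize the Manhattan curve as the zero set of a pressure function on the suspension flow, then exploit the thermodynamic formalism for the associated countable-state Markov shift. Concretely, for finite-area type-preserving Fuchsian $\rho_1,\rho_2$ we have (by the symbolic model of Section~\ref{sec:Geodesic-Flows-Symbolic}) a common countable Markov shift $(\TMS,\sigma)$ and two roof functions $\tau_{\rho_1},\tau_{\rho_2}$. The first step is to express, via a version of Bowen's formula, the critical exponent $\delta^{a,b}_{\rho_1,\rho_2}$ as the unique value $h$ for which the $\sigma$-Gurevich pressure $P_\sigma\big(-h(a\tau_{\rho_1}+b\tau_{\rho_2})\big)$ vanishes. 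Thus $\mathcal C(\rho_1,\rho_2)=\{(a,b):\ F(a,b)=0\}$ where $F(a,b)$ is defined implicitly by $P_\sigma\big(-F(a,b)(a\tau_{\rho_1}+b\tau_{\rho_2})\big)=0$; equivalently, setting $G(a,b,h):=P_\sigma\big(-h(a\tau_{\rho_1}+b\tau_{\rho_2})\big)$, the curve is $G=0$. The scaling homogeneity of $d^{a,b}$ shows $F$ is homogeneous of degree $-1$, which already encodes that the curve is a graph and is asymptotically linear.

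**Next I would establish real analyticity.** The key input is that, because $\rho_1,\rho_2$ are type-preserving, the roof functions $\tau_{\rho_1},\tau_{\rho_2}$ have matched behavior near the cusps (the parabolic contributions are controlled), so that the weighted sum $a\tau_{\rho_1}+b\tau_{\rho_2}$ lies, for $(a,b)$ in a neighborhood of the curve, in the class of potentials for which the Gurevich pressure is real analytic in parameters — this is exactly the range isolated by the careful study of the geometric potential promised in the introduction, where one knows the pressure is finite and the Ruelle/transfer operator has a spectral gap. Given this, $G(a,b,h)$ is real analytic in $(a,b,h)$ on the relevant open set, and $\partial G/\partial h = -P'_\sigma(\cdot)\cdot(a\tau_{\rho_1}+b\tau_{\rho_2})$, which one identifies with $-\int(a\tau_{\rho_1}+b\tau_{\rho_2})\,d\mu < 0$ for the equilibrium state $\mu$ (the roof functions are strictly positive and bounded below by a constant times the return time). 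Hence $\partial G/\partial h\neq 0$ along the curve, and the analytic implicit function theorem gives that $\mathcal C(\rho_1,\rho_2)$ is a real analytic curve, with $h=F(a,b)$ real analytic.

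**For convexity,** I would compute the Hessian of $F$ along the curve using implicit differentiation of $G(a,b,F(a,b))=0$. Writing $\phi_t:=a\tau_{\rho_1}+b\tau_{\rho_2}$ and using the standard formulas for first and second derivatives of pressure — $DP(\psi)[\xi]=\int\xi\,d\mu_\psi$ and $D^2P(\psi)[\xi,\xi]=\mathrm{Var}_{\mu_\psi}(\xi)$ (the asymptotic variance, via the central limit theorem for the Markov shift, which is nonnegative and vanishes iff $\xi$ is cohomologous to a constant) — the second derivative of $F$ in any direction reduces, after the algebra, to a nonnegative multiple of an asymptotic variance of a linear combination of $\tau_{\rho_1}$ and $\tau_{\rho_2}$. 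This shows $F$ is convex, hence the curve $\{F=1\}$ (equivalently $\{F(a,b)=1\}$, a level set of a convex homogeneous-degree-$(-1)$ function) bounds a convex region; strict convexity fails precisely when that asymptotic variance vanishes identically along the curve, i.e., when $\tau_{\rho_1}$ and $\tau_{\rho_2}$ are cohomologous up to scaling. The last step is a rigidity argument: $\tau_{\rho_1}$ and $\tau_{\rho_2}$ are cohomologous up to a constant factor $c$ if and only if $\ell_{\rho_1}(\gamma)=c\,\ell_{\rho_2}(\gamma)$ for every closed geodesic $\gamma$ (Livšic theorem, applied on each finite subgraph and passed to the limit), and by the marked length spectrum rigidity for hyperbolic surfaces this forces $c=1$ and $\rho_1,\rho_2$ conjugate in $\psltr$; in that case $\tau_{\rho_1}=\tau_{\rho_2}$ and $F(a,b)=\delta_{\rho_1}/(a+b)$, so the curve is the straight line $a+b=\delta_{\rho_1}$.

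**The main obstacle** is the non-compactness of the Markov shift: one must verify that for $(a,b)$ near $\mathcal C(\rho_1,\rho_2)$ the potential $-h(a\tau_{\rho_1}+b\tau_{\rho_2})$ is \emph{positive recurrent} with a spectral gap and finite pressure, so that equilibrium states exist, the CLT/variance formulas are valid, and pressure varies real-analytically. This is where the type-preserving hypothesis and the precise asymptotics of the geometric potential near cusps (roof function growing like the logarithm of the horocyclic coordinate, with the two representations' roof functions differing by a bounded amount) are essential — without uniform control one could lose analyticity or even lose the equilibrium state under the time change, as flagged in the introduction. I would handle this by citing the analyticity-locus results established earlier in the paper (Bowen's formula and the identification of where $P_\sigma$ is analytic) and checking that a neighborhood of the curve lies inside that locus, using the degree $-1$ homogeneity to reduce to a compact arc.
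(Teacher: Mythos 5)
Your proposal is correct and follows essentially the same route as the paper: Bowen's formula identifies $\mathcal C(\rho_1,\rho_2)$ with the zero set $\{P(-a\tau_{\rho_1}-b\tau_{\rho_2})=0\}$, the phase-transition bound $t>\tfrac{1}{2(a+b)}$ (Theorem~\ref{thm:phase-transition}) locates the analyticity region, the implicit function theorem gives real analyticity, convexity comes from the standard thermodynamic argument (variance nonnegativity, equivalently H\"older's inequality), and rigidity reduces via Liv\v{s}ic to marked length spectrum rigidity. The only cosmetic difference is in the rigidity step: you reach $\tau_{\rho_1}\sim\tau_{\rho_2}$ by observing that vanishing second derivative of the graph $\chi$ forces $\mathrm{Var}(\tau_{\rho_1}+\chi'\kappa)=0$ with $\chi'\equiv-1$, whereas the paper evaluates the slope at the endpoints $(1,0),(0,1)$ and invokes uniqueness of equilibrium states to conclude $m_{-\tau}=m_{-\kappa}$; both land on the same cohomology statement. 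One small imprecision to tidy: the constant $c=1$ in your ``cohomologous up to scaling'' step is forced by the normalization $P(-\tau_{\rho_1})=P(-\tau_{\rho_2})=0$ (the cohomological constant equals the pressure difference, hence vanishes), not by the marked length spectrum rigidity theorem itself — though the theorem as quoted handles proportional spectra anyway, so this does not affect correctness.
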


Using the shape of Manhattan curve, we can further prove rigidity
results related with following dynamics quantities.

\begin{defn}
Let $\rho_{1}$, $\rho_{2}$ be a pair of Fuchsian representations.

\begin{enumerate}[font=\normalfont]

\item \textit{Bishop-Steiger entropy} $h_{BS}(\rho_{1},\rho_{2})$
of $\rho_{1}$ and $\rho_{2}$ is defined as 
\[
h_{{\rm BS}}(\rho_{1},\rho_{2}):={\displaystyle \lim_{T\to\infty}\frac{1}{T}\ln\left(\#\{[\g]\in[\pi_{1}(S)]:d(o,\rho_{1}(o,\g o))+d(o,\rho_{2}(o,\g o))\leq T\}\right)}.
\]

\item \textit{The intersection number} ${\rm I}(\rho_{1},\rho_{2})$
of $\rho_{1}$ and $\rho_{2}$ is defined as 
\[
{\rm I}(\rho_{1},\rho_{2}):=\lim_{n\to\infty}\frac{l_{2}[\g_{n}]}{l_{1}[\g_{n}]}
\]
 where $\{[\g_{n}]\}_{n=1}^{\infty}$ is a sequence of conjugacy classes
for which the associated closed geodesics $\g_{n}$ become equidistributed
on $X_{\rho_{1}}$ with respect to area.

\end{enumerate}
\end{defn}

Using a dynamics interpretation of ${\rm I}(\rho_{1},\rho_{2})$ and
the convexity and analyticity of pressure, we recover the following
results of Bishop and Steiger \cite{Bishop:1991gz}, and Thurston.

\begin{thmx}\label{thm-entropy-rigidity}

Let $\rho_{1}$, $\rho_{2}$ be a pair of area type-preserving Fuchsian
representations, we have 

\begin{enumerate}[font=\normalfont]

\item$($Bishop-Steiger Rigidity$)$ $h_{{\rm BS}}(\rho_{1},\rho_{2})\leq\frac{1}{2}$,
and the equality holds if and only if $\rho_{1}$ and $\rho_{2}$
are conjugate in $\psltr$.

\item $($The Intersection Number Rigidity$)$ ${\rm I}(\rho_{1},\rho_{2})\geq1$,
and the equality holds if and only if $\rho_{1}$ and $\rho_{2}$
are conjugate in $\psltr$.

\end{enumerate}

\end{thmx}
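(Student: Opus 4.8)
\textbf{Proof proposal for Theorem~\ref{thm-entropy-rigidity}.}
The plan is to deduce both statements from the shape of the Manhattan curve ${\cal C}(\rho_1,\rho_2)$ established in Theorem~\ref{Thm:manhattan-strictly-convex}, by reading off the two dynamical quantities as specific geometric features of that curve. First I would recall the elementary fact that ${\cal C}(\rho_1,\rho_2)$ always passes through the two points $(\delta_{\rho_1},0)$ and $(0,\delta_{\rho_2})$, and that since $\rho_1,\rho_2$ are finite area type-preserving Fuchsian, Otal--Peign\'e gives $\delta_{\rho_1}=\delta_{\rho_2}=1$; thus the curve joins $(1,0)$ to $(0,1)$. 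By definition of the Bishop--Steiger entropy, $h_{\mathrm{BS}}(\rho_1,\rho_2)$ is exactly the critical exponent $\delta^{1,1}_{\rho_1,\rho_2}$ of the Poincar\'e series with weights $a=b=1$; equivalently, $h_{\mathrm{BS}}(\rho_1,\rho_2)=1/t_0$ where $(t_0,t_0)\in{\cal C}(\rho_1,\rho_2)$ is the intersection of the curve with the diagonal. So the claim $h_{\mathrm{BS}}\le \tfrac12$ is precisely the statement that this diagonal intersection point satisfies $t_0\ge 2$, i.e. that the convex curve through $(1,0)$ and $(0,1)$ meets the diagonal no closer to the origin than the line segment between those endpoints does. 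Convexity of ${\cal C}(\rho_1,\rho_2)$ (Theorem~\ref{Thm:manhattan-strictly-convex}) forces the curve to lie on the far side of the chord $a+b=1$, hence $t_0\ge 2$, with equality forcing the curve to coincide with that chord, which by the rigidity clause of Theorem~\ref{Thm:manhattan-strictly-convex} happens iff $\rho_1,\rho_2$ are conjugate in $\psltr$.

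For part (2), the key step is to identify the intersection number ${\rm I}(\rho_1,\rho_2)$ with the slope of ${\cal C}(\rho_1,\rho_2)$ at its endpoint on the $a$-axis. Using the symbolic model of Section~\ref{sec:Geodesic-Flows-Symbolic}, the Manhattan curve is cut out by the implicit equation $P(-a\tau_{\rho_1}-b\tau_{\rho_2})=0$ (Bowen's formula), where $P$ is the pressure for the countable Markov shift $(\TMS,\sigma)$ common to both representations. Differentiating this relation implicitly at the point $(1,0)$ and using that the equilibrium state of $-\tau_{\rho_1}$ is the measure of maximal entropy pushing forward to the Liouville/area measure on $X_{\rho_1}$, I get
\[
\frac{db}{da}\Big|_{(1,0)} \;=\; -\,\frac{\int \tau_{\rho_1}\,d\mu}{\int \tau_{\rho_2}\,d\mu},
\]
and the ratio on the right is exactly $-{\rm I}(\rho_1,\rho_2)$ by the definition of the intersection number via area-equidistributed closed geodesics (the ratio $l_2[\g_n]/l_1[\g_n]$ converges to $\int\tau_{\rho_2}d\mu / \int\tau_{\rho_1}d\mu$). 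Thus the tangent line to ${\cal C}(\rho_1,\rho_2)$ at $(1,0)$ has slope $-{\rm I}(\rho_1,\rho_2)$. Since the curve is convex and passes through $(0,1)$, this tangent line at $(1,0)$ lies below the chord, which means its slope is at most $-1$, i.e. ${\rm I}(\rho_1,\rho_2)\ge 1$. Equality means the tangent line at $(1,0)$ is the chord itself; strict convexity (Theorem~\ref{Thm:manhattan-strictly-convex}) then forces ${\cal C}(\rho_1,\rho_2)$ to be that straight chord, hence $\rho_1,\rho_2$ conjugate in $\psltr$. Conversely, if they are conjugate then $\tau_{\rho_1}$ and $\tau_{\rho_2}$ are cohomologous, so ${\rm I}=1$ and the curve is the chord.

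The routine parts are the convexity-geometry arguments (a convex arc joining $(1,0)$ and $(0,1)$ stays above the chord and its endpoint tangent stays below it) and unwinding the definitions of $h_{\mathrm{BS}}$ and ${\rm I}$. \emph{The main obstacle} is the rigorous justification of the two dictionary entries, especially the identification of ${\rm I}(\rho_1,\rho_2)$ with the derivative of the Manhattan curve at its endpoint: this requires knowing that the equilibrium state of $-\tau_{\rho_1}$ for the countable Markov shift is genuinely the lift of the area (Liouville) measure on $X_{\rho_1}$ --- so that the equidistribution with respect to area in the definition of ${\rm I}$ matches the Birkhoff averages appearing in the pressure derivative --- and that the suspension-flow/roof-function formalism over a \emph{non-compact} Markov shift still allows one to differentiate the pressure and to express $d_{\rho_1,\rho_2}^{a,b}(o,\g o)$ in terms of the weighted roof function $a\tau_{\rho_1}+b\tau_{\rho_2}$ up to bounded error uniformly in $\g$. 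Once the analyticity of $P$ on the relevant domain (already used for Theorem~\ref{Thm:manhattan-strictly-convex}) and this measure identification are in hand, both rigidity statements follow from the convexity picture with no further work.
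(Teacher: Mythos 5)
Your overall architecture is sound, and for part (2) it is genuinely different from the paper's: the paper deduces ${\rm I}(\rho_1,\rho_2)\ge 1$ directly from the variational principle (applying $P(-\kappa)=0\ge h_\sigma(m_{-\tau})-\int\kappa\,\dd m_{-\tau}$ together with $h_\sigma(m_{-\tau})=\int\tau\,\dd m_{-\tau}$), whereas you read ${\rm I}$ off as the endpoint slope of the Manhattan curve and invoke convexity. Both routes work, and yours has the virtue of making the two rigidity statements visibly two features of the same convex curve; the paper's is shorter because it never needs the implicit differentiation. Your part (1) is essentially the paper's argument (diagonal intersection point plus convexity), and the technical prerequisites you flag — the identification of $m_{-\tau}$ with the lift of Liouville measure, the bounded comparison of $d^{a,b}_{\rho_1,\rho_2}$ with $a\tau+b\kappa$, and analyticity of the pressure on the relevant domain — are exactly what Sections 3--5 of the paper supply (Proposition \ref{prop:Geo_Poten_II}, Proposition \ref{prop:intersection-dynamics}, Theorem \ref{prop:pressue_matchs}).

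However, the quantitative dictionary entries in both parts are inverted, and your conclusions come out right only because two errors cancel each time. For part (1): since $d^{t,t}=t\,d^{1,1}$, the critical exponent scales as $\delta^{t,t}=\delta^{1,1}/t$, so the diagonal point $(t_0,t_0)\in{\cal C}$ satisfies $t_0=\delta^{1,1}=h_{\rm BS}$, not $h_{\rm BS}=1/t_0$. Correspondingly, convexity places the curve on the \emph{origin} side of the chord $a+b=1$ (the superlevel region $\{\delta^{a,b}\le 1\}$ is the convex one, and it contains the chord), so the correct conclusion is $t_0\le\tfrac12$, not $t_0\ge 2$; your sentence ``no closer to the origin than the segment'' is also internally inconsistent with ``hence $t_0\ge 2$.'' For part (2): your implicit differentiation is correct, but
\[
\frac{db}{da}\Big|_{(1,0)}=-\frac{\int\tau\,\dd m_{-\tau}}{\int\kappa\,\dd m_{-\tau}}=-\frac{1}{{\rm I}(\rho_1,\rho_2)},
\]
not $-{\rm I}(\rho_1,\rho_2)$ (this contradicts your own parenthetical identification of ${\rm I}$ as $\int\kappa\,\dd m_{-\tau}/\int\tau\,\dd m_{-\tau}$). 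And for a convex arc $b=\chi(a)$ joining $(0,1)$ to $(1,0)$ and lying below the chord, the endpoint derivative satisfies $\chi'(1)\ge -1$ (a line through $(1,0)$ lying below the chord on $[0,1)$ is \emph{less} steep than the chord, not steeper). The correct chain is $-1/{\rm I}\ge -1$, hence ${\rm I}\ge 1$. Once these reciprocals and orientations are fixed, the equality cases do follow from the strict-convexity dichotomy of Theorem \ref{Thm:manhattan-strictly-convex} exactly as you say.
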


\begin{rem} \

\begin{enumerate}[font=\normalfont]

\item One might prove ${\cal C}(\rho_{1},\rho_{2})$ is $C^{1}$
and Theorem \ref{thm-entropy-rigidity} without employing symbolic
dynamics. Nevertheless, symbolic dynamics provides a convenient approach
to control the analyticity of pressure, and hence to prove the analyticity
of ${\cal C}(\rho_{1},\rho_{2})$.

\item It is no very clear why ${\rm I}(\rho_{1},\rho_{2})$ is well-defined.
We will justify it in Section \ref{sec:Geodesic-Flows-Symbolic}.

\item The intersection number rigidity is known as a work of Thurston
amount experts. However, due to the limited knowledge of the author,
for the non-convex co-compact cases we cannot find a reference of
it.

\end{enumerate}

\end{rem}

We now change gear from pairs of Fuchsian representations to the space
of conjugacy classes of Fuchsian representations, that is, the Teichm\"uller
space of $S=S_{g,n}$. Recall that the Teichm\"uller space of $S$
is defined as 
\[
{\cal T}(S):={\rm Hom_{tp}^{F}}(\pi_{1}(S),\psl(2,\R))/\sim
\]
where ${\rm Hom_{tp}^{F}}(\pi_{1}(S),\psl(2,\R))$ is the space of
finite area type-preserving Fuchsian representations, and $\rho_{1}\sim\rho_{2}$
if they are conjugate in $\psl(2,\R)$. 

Through the symbolic model, there is a thermodynamic mapping $\Psi:{\cal T}(S)\to{\rm \mathbf{P}}$
where ${\rm \mathbf{P}}$ is a special space of continuous functions
over $\TMS$ containing geometric potentials. Using the pressure and
variance we can define a norm $||\cdot||_{{\rm P}}$ over ${\rm \mathbf{P}}$.
Using the pullback of $||\cdot||_{{\rm P}}$, we can define a Riemannian
metric $||\cdot||_{{\rm }}$ on ${\cal T}(S)$. We call this Riemannian
metric the \textit{pressure metric.} Moreover, $||\cdot||_{{\rm }}$
can also be derived by the Hessian of the intersection number:

\begin{thmx}[The Pressure Metric]\label{thm:pressure metric}

Suppose $\rho_{t}\in{\cal T}(S)$ is an analytic path for $t\in(-\ep,\ep)$.
Then ${\rm I}(\rho_{0},\rho_{t})$ is real analytic and 
\[
||\dot{\rho}_{0}||^{2}:=||{\rm d}\psi(\dot{\rho}_{0})||_{{\rm P}}^{2}=\left.\frac{{\rm d}^{2}{\rm I}(\rho_{0},\rho_{t})}{{\rm d}t^{2}}\right|_{t=0}
\]
defines a Riemannian metric on ${\cal T}(S_{g,n})$. 

\end{thmx}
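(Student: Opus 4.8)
The plan is to transfer everything to the symbolic side via the thermodynamic map $\Psi$ and then invoke the standard thermodynamic-formalism machinery, being careful about non-compactness. First I would set up notation: for $\rho_t$ an analytic path in ${\cal T}(S)$, the roof functions $\tau_{\rho_t}$ vary analytically in the appropriate function space $\mathbf{P}$, and one needs to know that the reparametrization-pressure function $(a,b)\mapsto P(-a\tau_{\rho_0}-b\tau_{\rho_t})$ (equivalently the function whose zero set is the Manhattan curve ${\cal C}(\rho_0,\rho_t)$) is real analytic in a neighborhood of the relevant point, using the analysis of geometric potentials near the cusps established earlier in the paper (this is exactly the place where the careful control of $\tau_\rho$ around parabolics is used, so that the pressure is finite and analytic and the equilibrium state exists). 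Granting Theorem~\ref{Thm:manhattan-strictly-convex}, the Manhattan curve ${\cal C}(\rho_0,\rho_t)$ is a real analytic curve, and implicit differentiation of the defining equation $\delta^{a,b}_{\rho_0,\rho_t}=1$ together with the dynamical interpretation of ${\rm I}(\rho_0,\rho_t)$ (promised in Section~\ref{sec:Geodesic-Flows-Symbolic}) as the slope of this curve at the point $(1,0)$ — or, equivalently, as $\int \tau_{\rho_t}\,d\mu_{\rho_0}/\int \tau_{\rho_0}\,d\mu_{\rho_0}$ where $\mu_{\rho_0}$ is the measure of maximal entropy pulled back to the shift — gives real analyticity of $t\mapsto {\rm I}(\rho_0,\rho_t)$ on $(-\ep,\ep)$.

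Next I would compute the second derivative. Writing $f_t=\tau_{\rho_t}$ and normalizing so that $f_0=\tau_{\rho_0}$ has entropy-pressure zero after rescaling (i.e. using the pressure function $P(-s f_t)$ and its implicit solution $s=s(t)$ with $s(0)=\delta_{\rho_0}$), the quantity ${\rm I}(\rho_0,\rho_t)$ is up to the constant $1/h_{\rho_0}$ exactly this implicit function, so $\frac{d^2}{dt^2}{\rm I}(\rho_0,\rho_t)|_{t=0}$ is expressed through first and second variations of pressure. The first-variation term $\frac{d}{dt}\big|_{0}\int f_t\,d\mu_{\rho_0}$ is the derivative of the intersection number in the direction $\dot\rho_0$; the key point — and this is the heart of the computation — is that it \emph{vanishes}, which is the analog of the classical fact that the derivative of the Hausdorff dimension / critical exponent along a path through the "round" point is zero, following from the variational principle (the equilibrium measure is critical). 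What then survives is the second-order term, which by the standard Ruelle/Parry–Pollicott second-derivative-of-pressure formula equals $\mathrm{Var}(\dot f_0,\mu_{\rho_0})/\int \tau_{\rho_0}\,d\mu_{\rho_0}$ (with $\dot f_0$ suitably re-centered), and this is by definition $||{\rm d}\psi(\dot\rho_0)||_{\rm P}^2$. So the identity $||\dot\rho_0||^2=\frac{d^2}{dt^2}{\rm I}(\rho_0,\rho_t)|_{t=0}$ is really a restatement of the second-derivative-of-pressure formula once the first-order term is killed.

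Finally I would check the two properties needed for $||\cdot||$ to be a genuine Riemannian metric: smoothness (in fact analyticity) of $\rho\mapsto ||\cdot||_\rho$ as a quadratic form on $T_\rho{\cal T}(S)$, which follows from analytic dependence of the equilibrium state $\mu_\rho$ and of the variance functional on the potential (again using the function-space analysis of $\mathbf{P}$), and non-degeneracy, i.e. $||\dot\rho_0||^2=0 \iff \dot\rho_0=0$. Non-degeneracy comes from the strict convexity half of Theorem~\ref{Thm:manhattan-strictly-convex}: $\mathrm{Var}(\dot f_0,\mu_{\rho_0})=0$ forces $\dot f_0$ to be cohomologous to a constant, hence (by the coding, where periods of $\tau_\rho$ are lengths of closed geodesics) forces all length ratios $\ell_{\rho_t}[\gamma]/\ell_{\rho_0}[\gamma]$ to agree to first order with a single constant $=1$, which by the marked-length-spectrum rigidity for hyperbolic surfaces means $\dot\rho_0=0$ in ${\cal T}(S)$; concretely this is the statement that the Manhattan curve degenerates to a line exactly along the path $\rho_t\equiv\rho_0$ up to conjugacy.

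I expect the main obstacle to be the justification that the first-variation term $\frac{d}{dt}\big|_0\int \tau_{\rho_t}\,d\mu_{\rho_0}$ vanishes and, more basically, that one is \emph{allowed} to differentiate under these limits/integrals in the non-compact setting: on a countable-state Markov shift with an unbounded roof function, interchanging $\frac{d}{dt}$ with the Poincaré series limit defining ${\rm I}$, and with the integral against $\mu_{\rho_0}$, requires uniform integrability and uniform (in $t$) control of $\tau_{\rho_t}$ and $\partial_t\tau_{\rho_t}$ near the cusps — precisely the kind of estimate that fails for generic unbounded potentials (as the remark about time changes not preserving equilibrium states warns) and that must be supplied by the explicit form of the geometric potential near parabolic elements. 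Everything else — real analyticity of pressure on the region where it is analytic, the second-derivative formula, and marked-length-spectrum rigidity — can be quoted from the earlier sections and the cited literature.
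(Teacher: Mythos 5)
Your proposal follows essentially the same route as the paper: ${\rm I}(\rho_0,\rho_t)=\int\tau_t\,\dd m_{-\tau_0}\big/\int\tau_0\,\dd m_{-\tau_0}$ via the symbolic model, analyticity from the analytic thermodynamic mapping, the identity $\frac{{\rm d}^2}{{\rm d}t^2}{\rm I}\big|_{t=0}={\rm Var}(\dot\tau_0,m_{-\tau_0})/\int\tau_0\,\dd m_{-\tau_0}$ obtained by differentiating $P(-\tau_t)\equiv 0$ twice, and non-degeneracy from ``variance zero $\Rightarrow$ cohomologically trivial $\Rightarrow$ all length derivatives vanish.'' The technical worry you flag (control of $\dot\tau_0$ near the cusps) is exactly what the paper supplies: $|\tau_{\rho_t}-\tau_{\rho_0}|$ is uniformly bounded, so $\dot\tau_0$ is a \emph{bounded} locally H\"older function and the derivative-of-pressure corollary applies. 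Two points need repair, though neither is load-bearing. First, your claim that ${\rm I}(\rho_0,\rho_t)$ is, up to $1/h_{\rho_0}$, the implicit solution $s(t)$ of $P(-s\,f_t)=0$ is false: by Bowen's formula that solution is the critical exponent of $\rho_t$, hence identically $1$ on ${\cal T}(S)$, and its Hessian is zero. The correct identity is the quotient of integrals you also write down (equivalently, $-1/{\rm I}$ is the \emph{slope} of the Manhattan curve at $(1,0)$, not ${\rm I}$ itself); since the denominator $\int\tau_0\,\dd m_{-\tau_0}$ does not depend on $t$, the second derivative is simply $\int\ddot\tau_0\,\dd m_{-\tau_0}/\int\tau_0\,\dd m_{-\tau_0}$, which the second-variation-of-pressure formula converts into the variance. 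Second, for non-degeneracy you invoke marked-length-spectrum rigidity, which is a statement about representations, not tangent vectors; what is actually needed (and what the paper uses) is the infinitesimal statement that finitely many geodesic length functions give coordinates on ${\cal T}(S)$, so that $\frac{{\rm d}}{{\rm d}t}\big|_{t=0}\,l(\rho_t[h])=S_m\dot\tau_0(x)=0$ for all hyperbolic $h$ forces $\dot\rho_0=0$. Also note that the constant to which $\dot\tau_0$ is cohomologous is necessarily $\int\dot\tau_0\,\dd m_{-\tau_0}=0$ (from the first variation of $P(-\tau_t)\equiv 0$), which is why the length derivatives vanish outright rather than merely agreeing.
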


We briefly discuss the history of this Riemannian metric $||\cdot||_{{\rm }}$
on ${\cal T}(S_{g,n})$. When $n=0$, Thurston first discovered it
by using the Hessian of the intersection number. Thus, this Riemannian
metric is also known as \textit{Thurston's Riemannian metric}. Moreover,
proved by Wolpert \cite{Wolpert:1986wv}, this Riemannian metric is
exactly the Weil-Petersson metric on ${\cal T}(S_{g,0})$. McMullen
\cite{McMullen:2008eh} recovered this Riemannian metric using thermodynamic
formalism and called it the \textit{pressure metric}. Carrying over
the same spirit, Bridgeman, Canary, Labourie, and Sambarino \cite{Bridgeman:2013to}
generalized this dynamics approach and constructed a Riemannian metric
on the space of Anosov representations, i.e., a higher rank generalization
of ${\cal T}(S_{g,0})$. Our Theorem \ref{thm:pressure metric} extends
the pressure metric and Thurston's construction to ${\cal T}(S_{g,n})$
for $n>0$. 

The last result of the paper is to link the two main topics in this
work: Manhattan curves and the pressure metric. We prove that when
we look at a path in ${\cal T}(S)$, the variation of corresponding
Manhattan curves contains information of the pressure metric. Similar
result has been proved by Pollicott and Sharp \cite{Pollicott:2014uk}
when $S$ is a closed surface. We generalize it to surfaces with punctures.

\begin{thmx}\label{thm:pressure-metric-manhattan-curve}

Let $(s,\chi_{t}(s))$ be the coordinates of points on the Manhattan
curve ${\cal C}(\rho_{0},\rho_{t})$, then we have 
\[
\left.\frac{{\rm d}^{2}{\cal \chi}_{t}(s)}{{\rm d}t^{2}}\right|_{t=0}=s(s-1)\cdot||\dot{\rho}_{0}||_{{\rm }}^{2}\ \ {\rm for\ }s\in(0,1).
\]

\end{thmx}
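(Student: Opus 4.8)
The plan is to identify the Manhattan curve ${\cal C}(\rho_0,\rho_t)$ with the zero set of a pressure function and then differentiate twice in $t$, transferring everything to the thermodynamic side where the pressure metric lives. Concretely, write $\tau_0 := \tau_{\rho_0}$ and $\tau_t := \tau_{\rho_t}$ for the geometric potentials (roof functions) associated to $\rho_0$ and $\rho_t$ on the common countable state Markov shift $(\TMS,\sigma)$. By the version of Bowen's formula established earlier in the paper, a point $(s,\chi_t(s))$ lies on ${\cal C}(\rho_0,\rho_t)$ precisely when
\[
P\bigl(-s\,\tau_0 - \chi_t(s)\,\tau_t\bigr) = 0,
\]
where $P$ denotes the pressure on the appropriate space ${\rm \mathbf{P}}$ of potentials. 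This defines $\chi_t(s)$ implicitly, and I would first record that $\chi_0(s) = 1-s$ (since $\rho_0$ is compared with itself, so $-s\tau_0 - (1-s)\tau_0 = -\tau_0$ has zero pressure by the self-Bowen-formula), together with the fact that the equilibrium state for $-\tau_0$ is the measure of maximal entropy $m_0$, normalized so that $\int \tau_0 \, dm_0 = 1$.

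Next I would differentiate the relation $P(-s\tau_0 - \chi_t(s)\tau_t) = 0$ twice with respect to $t$ at $t=0$, holding $s$ fixed. The first derivative of pressure in the direction $\dot\phi$ is $\int \dot\phi \, dm$ (the equilibrium state), and the second derivative in direction $\dot\phi$, for a fixed family with vanishing first-order effect, is the variance ${\rm Var}(\dot\phi, m)$; these are exactly the ingredients of $||\cdot||_{\rm P}$. Writing $\dot\tau_0 := \frac{d}{dt}\big|_{t=0}\tau_t$ for the infinitesimal deformation of the roof function (so that ${\rm d}\psi(\dot\rho_0)$ is, up to the projection defining ${\rm \mathbf{P}}$, represented by $\dot\tau_0$), the first derivative in $t$ gives
\[
-\chi_0'(s)\!\int \tau_0\, dm_0 \;-\; \chi_0(s)\!\int \dot\tau_0 \, dm_0 \;=\; 0,
\]
where here $'$ denotes $\partial_t$ evaluated at $t=0$ and I abuse notation by writing $\chi_0(s)$ for $\chi_t(s)|_{t=0}=1-s$ and $\chi_0'(s)$ for $\partial_t\chi_t(s)|_{t=0}$. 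Using $\int\tau_0\,dm_0=1$ this computes the first $t$-derivative of $\chi_t(s)$ explicitly in terms of $\int\dot\tau_0\,dm_0$. The key normalization point, which I expect to be the main technical obstacle, is to arrange (by a harmless reparametrization / choice of basepoints, or by working modulo coboundaries in ${\rm \mathbf{P}}$ as in the definition of the pressure metric) that $\int \dot\tau_0 \, dm_0 = 0$, i.e. the deformation is "pressure-metric normalized"; granting this, the first $t$-derivative of $\chi_t(s)$ vanishes and the second-derivative computation simplifies dramatically.

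With that normalization in hand, differentiating a second time in $t$ at $t=0$, and using that the first-order term is a critical point of pressure so that the Hessian of $P$ reduces to the variance, yields
\[
-\chi_0''(s)\int\tau_0\,dm_0 \;+\; {\rm Var}\bigl(-s\,\dot{(\,\cdot\,)} \text{-terms}\bigr) \;=\; 0,
\]
which after collecting terms becomes $\chi_0''(s) = {\rm Var}\bigl((s-1)\dot\tau_0,\, m_0\bigr)$ up to the normalization of the roof function; since variance is quadratic, ${\rm Var}((s-1)\dot\tau_0, m_0) = (s-1)^2 {\rm Var}(\dot\tau_0,m_0)$. The final step is to reconcile this with the stated answer $s(s-1)||\dot\rho_0||^2$: one must use the intersection-number characterization of the pressure norm from Theorem \ref{thm:pressure metric} (where $||\dot\rho_0||^2$ appears as a Hessian of ${\rm I}(\rho_0,\rho_t)$, itself expressible via the variance of $\dot\tau_0$ against $m_0$, possibly after the correct normalization $\int\tau_0\,dm_0=1$ absorbs a factor), and then a bookkeeping identity relating $(s-1)^2{\rm Var}(\dot\tau_0,m_0)$ to $s(s-1)||\dot\rho_0||^2$ — the discrepancy between the exponent $(s-1)^2$ and $s(s-1)$ being exactly accounted for by the contribution of the $-s\tau_0$ term to the variance, i.e. by the cross term ${\rm Var}(-s\tau_0 + (s-1)\dot\tau_0)$ rather than ${\rm Var}((s-1)\dot\tau_0)$ alone, once one carefully tracks which quantities are held fixed. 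I expect the bulk of the work, and the one genuinely delicate point, to be making the implicit-function differentiation rigorous on the \emph{non-compact} shift: one needs the analyticity of $t\mapsto P(-s\tau_0-\chi_t(s)\tau_t)$ and the validity of the derivative formulas $\partial P = \int(\cdot)\,dm$, $\partial^2 P = {\rm Var}$ for the unbounded geometric potentials, which is precisely the regularity control of $\tau_\rho$ near the cusps established in Section \ref{sec:Geodesic-Flows-Symbolic} and underlying Theorems \ref{Thm:manhattan-strictly-convex} and \ref{thm:pressure metric}; once that machinery is invoked, the rest is the routine differentiation sketched above, valid for $s\in(0,1)$ where the relevant potentials lie in ${\rm \mathbf{P}}$ and the equilibrium states exist and vary analytically.
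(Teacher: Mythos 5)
Your overall strategy is the same as the paper's: identify ${\cal C}(\rho_0,\rho_t)$ with $\{(s,\chi_t(s)):P(-s\tau_0-\chi_t(s)\tau_t)=0\}$, note $\chi_0(s)=1-s$, and differentiate twice in $t$ using the first and second derivative formulas for pressure. One minor misconception first: $\int\dot{\tau}_0\,{\rm d}m_{-\tau_0}=0$ is not a normalization you need to ``arrange'' by reparametrization or by working modulo coboundaries; it is automatic, because every $\tau_t$ satisfies Bowen's formula $P(-\tau_t)=0$, and differentiating this identity once in $t$ at $t=0$ gives exactly $\int\dot{\tau}_0\,{\rm d}m_{-\tau_0}=0$. (Likewise you cannot normalize $\int\tau_0\,{\rm d}m_{-\tau_0}=1$ --- it is a fixed geometric quantity --- but this is harmless since the pressure norm $||f||_{\rm P}={\rm Var}(f,m_{-\tau_0})/\int\tau_0\,{\rm d}m_{-\tau_0}$ already carries that integral in its denominator.)

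The genuine gap is in the second-derivative step. Writing $\varphi_t=-s\tau_0-\chi_t(s)\tau_t$, one has $\ddot{\varphi}_0=-\ddot{\chi}_0\tau_0-2\dot{\chi}_0\dot{\tau}_0-\chi_0\ddot{\tau}_0$, and the second derivative of $P(\varphi_t)$ at $t=0$ is ${\rm Var}(\dot{\varphi}_0,m_{-\tau_0})+\int\ddot{\varphi}_0\,{\rm d}m_{-\tau_0}$. You retain only the variance piece $(1-s)^2{\rm Var}(\dot{\tau}_0,m_{-\tau_0})$ and drop the term $-\chi_0(s)\int\ddot{\tau}_0\,{\rm d}m_{-\tau_0}$. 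That term is essential: differentiating the identity $P(-\tau_t)=0$ \emph{twice} yields $\int\ddot{\tau}_0\,{\rm d}m_{-\tau_0}={\rm Var}(\dot{\tau}_0,m_{-\tau_0})$, so it contributes $-(1-s){\rm Var}(\dot{\tau}_0,m_{-\tau_0})$, and $\bigl((1-s)^2-(1-s)\bigr){\rm Var}(\dot{\tau}_0,m_{-\tau_0})=s(s-1)\int\tau_0\,{\rm d}m_{-\tau_0}\cdot||\dot{\rho}_0||^2_{\rm }$ is exactly what produces the stated coefficient. Your proposed repair --- a ``cross term'' coming from the $-s\tau_0$ part of the potential --- cannot supply the missing $-(1-s)$ factor, because $-s\tau_0$ is independent of $t$ and contributes nothing to $\dot{\varphi}_0$ or to the variance. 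The missing ingredient is the second-order deformation $\ddot{\tau}_0$ of the roof function combined with the constraint $P(-\tau_t)\equiv 0$ along the whole path.
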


The paper is organized as follows. In Section \ref{sec:Preliminary},
we introduce some background knowledge of geometry and thermodynamic
formalism of countable state Markov shifts. In Section \ref{sec:Geodesic-Flows-Symbolic}
we discuss the coding of geodesic flows and important properties of
the corresponding roof functions. We study the analyticity of the
pressure function in Section \ref{sec:Phase-Transitions}. Section
\ref{sec:Manhattan-and-Rigidity} is devoted to investigating the
shape of Manhattan curve and rigidity. In Section \ref{sec:The-Pressure-Metric},
we construct the pressure metric. In the last section, we focus on
the relation between Manhattan curves and the pressure metric. 

\begin{acknowledgement*}
The author is grateful to Prof. Fran\c{c}ois Ledrappier for proposing
the problem and numerous supports, to Prof. Dick Canary for many insightful
suggestions and helps. Substantial portions of this paper were written
while the author was visiting Prof. Jih-Hsin Cheng at Academia Sinica,
Taiwan. The author would like to thank Prof. Jih-Hsin Cheng and Academia
Sinica for their hospitality. The author is partially supported by
the National Science Foundation Postdoctoral Research Fellowship under
grant DMS 1703554.
\end{acknowledgement*}

\section{Preliminary\label{sec:Preliminary}}

\subsection{Geometry\label{subsec:Geometry}}

Through out this paper, $S=S_{g,n}$ is an orientable surface of $g$
genus and $n$ punctures and with negative Euler characteristic. In
this work, we are interested in finite area hyperbolic surfaces homemorphic
to $S$, that is, $S$ pair with a Riemannian metric ${\rm g}$ of
Gaussian curvature -1. Notice that every such surface $(S,{\rm g)}$
can be obtained by a Fuchsian representation. More precisely, $(S,{\rm g)}$
is isomorphic to the hyperbolic surface $X_{\rho}=\rho(\pi_{1}(S))\backslash\H$. 

For short, let us denote $\rho(\pi_{1}S)$ by $\G$. Recall that $\vbdy\H$
the \textit{boundary} of $\H$ is defined as $\R\cup\{0\}$, and $\Lambda(\G):=\overline{\{\g\cdot o:\g\in\G\}}$
denotes the \textit{limit set} of $\G$. An element $\g\in\G$ is
called \textit{hyperbolic} if $\g$ has two fixed points on $\L(\G)$,
namely, the \textit{attracting fixed point} $\g_{+}$ (i.e., $\lim_{n\to\infty}\g^{n}o=\g_{+}$)
and the \textit{repelling fixed point} $\g_{-}$ (i.e., $\lim_{n\to-\infty}\g^{n}o=\g_{-}$);
$\g$ is called \textit{parabolic} if it has one fixed point. Because
$X_{\rho}$ is negatively curved, we know that every closed geodesic
$\lambda$ on $X_{\rho}$ corresponds to a unique hyperbolic element
$\g$ (up to conjugation), and vice versa. Moreover, the length of
$\lambda$ equals to $l[\g]$ the translation distance of $\g$, that
is, $l[\g]:=\min\{d(x,\g x):x\in\H\}$.

A natural dynamical system associated to $X_{\rho}$ is the geodesic
flow $g_{t}:T^{1}X_{\rho}\to T^{1}X_{\rho}$ on the unit tangent bundle
$T^{1}X_{\rho}$, which translates many geometric problems to dynamics
problems. We recall that the \textit{Busemann function} $B:\vbdy\H\times\H\times\H$
is defined as 
\[
B_{\xi}(x,y):={\displaystyle \lim_{z\to\xi}d(x,z)-d(y,z)}
\]
for $x,y,z\in\H$ and $\xi\in\vbdy\H.$ Lift the geodesic flow $g_{t}:T^{1}X_{\rho}\to T^{1}X_{\rho}$
to its universal covering $T^{1}\H$, by abusing the notation, we
have the geodesic flow $g_{t}:T^{1}\H\to T^{1}\H.$\textcolor{red}{{} }

Recall that two Fuchsian representations $\rho_{1}$, $\rho_{2}$
are\textit{ type-preserving} if there exists an isomorphism $\iota:\rho_{1}(\pi_{1}S)\to\rho_{2}(\pi_{1}S)$
such that $\iota$ sends hyperbolic elements to hyperbolic elements
and parabolic elements to parabolic elements. The following theorem
indicates that if $\rho_{1},\rho_{2}$ are type-preserving finite
area Fuchsian representations, then we can link $X_{\rho_{1}}$ and
$X_{\rho_{2}}$ is a controlled manner.

\begin{thm}
[Fenchel-Nielsen Isomorphism Theorem; \cite{Kapovich:2009fk}, Theorem 5.5, 8.16, 8.29]\label{thm:Fenchel-Neilsen-Thm}
Suppose $\rho_{1},\rho_{2}$ are two finite area type-preserving Fuchsian
representations of $\pi_{1}S$. Then there exists an bilipschitz homeomorphism
${\rm b}:X_{\rho_{1}}\to X_{\rho_{2}}$. Moreover, one can extend
${\rm b}$ to an equivarient bilipschitz map, abusing the notation,
${\rm b}:\vbdy\H\cup\H\to\vbdy\H\cup\H$.
\end{thm}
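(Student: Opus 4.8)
The plan is to construct a bilipschitz model of the change of marking $\iota$ directly on the surfaces, using the thick--thin decomposition, and then lift it to $\H$. First I would note that, since $\iota$ is type-preserving, it carries the peripheral (parabolic) subgroups of $\rho_1(\pi_1 S)$ to peripheral subgroups of $\rho_2(\pi_1 S)$; hence, by the Dehn--Nielsen--Baer theorem for surfaces with punctures, $\iota$ is induced by a homeomorphism $f\colon X_{\rho_1}\to X_{\rho_2}$ (such an $f$ exists and is unique up to isotopy), so that $f_\ast=\iota$ on fundamental groups and $f$ matches the cusps of $X_{\rho_1}$ with those of $X_{\rho_2}$.

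Next I would exploit the geometry of finite area hyperbolic surfaces. Each $X_{\rho_i}$ has a thick--thin decomposition $X_{\rho_i}=K_i\cup C_i^{1}\cup\cdots\cup C_i^{n}$, where $K_i$ is a compact core and each cusp neighborhood $C_i^{j}$ is isometric to a fixed standard horoball quotient $\{\,z\in\H:\mathrm{Im}\,z>T\,\}/\langle z\mapsto z+1\rangle$. Because $f$ respects the peripheral structure, after an isotopy I may assume $f$ carries $C_1^{j}$ onto a subcusp of $C_2^{j}$ and, on the deep part of each cusp, equals the canonical isometry between the standard horoball quotients; on the remaining bounded-geometry annular collar I interpolate by a bilipschitz homeomorphism, which is possible since the collar is compact and the two boundary parametrizations lie in the same isotopy class. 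On the compact core, the homeomorphism $f|_{K_1}\colon K_1\to K_2$ may be replaced, rel boundary, by a bilipschitz one (smoothing of homeomorphisms in dimension two, together with compactness). Splicing the pieces produces a homeomorphism $\mathrm{b}\colon X_{\rho_1}\to X_{\rho_2}$ that is bilipschitz on each member of a finite closed cover with comparable constants; since a finite area hyperbolic surface is a geodesic (hence length) space, subdividing a near-geodesic path by the Lebesgue number of the cover upgrades these local estimates to a global bilipschitz bound for $\mathrm{b}$, and the same argument applied to $\mathrm{b}^{-1}$ on the image cover gives the bound for the inverse.

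Finally I would lift and extend to the boundary. The map $\mathrm{b}$ lifts to $\widetilde{\mathrm{b}}\colon\H\to\H$, and $\mathrm{b}_\ast=\iota$ gives the equivariance $\widetilde{\mathrm{b}}\circ\rho_1(\g)=\iota(\rho_1(\g))\circ\widetilde{\mathrm{b}}$ for all $\g\in\pi_1 S$. A global Lipschitz bound for $\widetilde{\mathrm{b}}$ (and for $\widetilde{\mathrm{b}}^{\,-1}$) descends from that of $\mathrm{b}$ by lifting geodesics: a geodesic segment of $\H$ projects to a path of the same length in $X_{\rho_1}$, whose $\mathrm{b}$-image has length at most $L$ times as long, and lifting this image path back to $\H$ from $\widetilde{\mathrm{b}}(z)$ controls $d(\widetilde{\mathrm{b}}(z),\widetilde{\mathrm{b}}(w))$. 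Being bilipschitz, $\widetilde{\mathrm{b}}$ is a quasi-isometry of the proper Gromov hyperbolic space $\H$, so it extends to a homeomorphism of the Gromov boundary $\vbdy\H$; the combined map on $\vbdy\H\cup\H$ is then a homeomorphism, equivariant for $\iota$ by continuity of the boundary action of $\psltr$ and bilipschitz on the interior, which is the asserted extension (abusing notation, still denoted $\mathrm{b}$).

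The main obstacle is the behavior near the cusps: the change of marking must be made bilipschitz there without inflating the bilipschitz constant, and this is precisely where the finite area and type-preserving hypotheses are essential --- there are only finitely many cusps, each with an isometrically standard horoball model, and there is a peripheral-structure-preserving isotopy class in which to choose $\mathrm{b}$. The compact-core step (smoothing homeomorphisms on a compact surface) and the boundary-extension step (stability of the Gromov boundary under quasi-isometries of $\H$) are standard.
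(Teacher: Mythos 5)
Your argument is correct in outline, but it proceeds along a genuinely different route from the paper, which does not prove this statement at all: it cites it from Kapovich's book, where the map is produced as a quasiconformal homeomorphism, and then appeals (in Remark \ref{rem:bilipschitz}) to Mori's theorem to pass from quasiconformal to bilipschitz. You instead build the map by hand: Dehn--Nielsen--Baer for the type-preserving change of marking, the thick--thin decomposition with standard horoball models on the cusps, smoothing on the compact core, interpolation on the collars, a Lebesgue-number/length-space argument to globalize the bilipschitz constant, and finally the quasi-isometry extension to the Gromov boundary. This costs more work but has a real payoff: your construction yields a genuinely bilipschitz map in the hyperbolic metric directly, whereas the quasiconformal route requires the (somewhat delicate) conversion step --- Mori's theorem as usually stated gives H\"older control in the Euclidean metric of the disk, and a general quasiconformal self-map of $\H$ is only a coarse quasi-isometry for the hyperbolic metric, so the paper's remark is doing nontrivial work there. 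Two small points you should make explicit if you write this up: (i) when you replace $f$ by the canonical cusp isometry on the deep part of each cusp, the two boundary parametrizations of the interpolation annulus differ at worst by a Dehn twist about a puncture-parallel curve, which is trivial in the mapping class group of the punctured surface, so the spliced map still induces $\iota$ up to inner automorphism; and (ii) the asserted extension to $\vbdy\H\cup\H$ is bilipschitz only on the interior --- on the boundary the quasi-isometry extension is merely an equivariant homeomorphism (quasi-symmetric), which is the correct reading of the statement and is all that the rest of the paper uses.
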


\begin{rem}
\label{rem:bilipschitz}In \cite{Kapovich:2009fk}, the homeomorphism
${\rm b}:X_{\rho_{1}}\to X_{\rho_{2}}$ is stated to be quasiconformal.
Nevertheless, using Mori's Theorem (cf. p.30 \cite{Ahlfors:2006ix})
it is not hard to see that quasiconformal homeomorphisms are indeed
bilipschitz maps. 
\end{rem}

In the following, we state a special case of \cite[Theorem A]{Kim:2001km}.

\begin{thm}
[Marked Length Spectrum Rigidity]\label{thm:proportinal marked length spectrum}Let
$\rho_{1},\rho_{2}:\pi_{1}(S)\to{\rm PSL}(2,\R)$ be Zariski dense
Fuchsian representations. Suppose $\rho_{1},\rho_{2}$ have the same
marked length spectrum, that is, $l[\rho_{1}(\gamma)]=k\cdot l[\rho_{2}(\gamma)]$
for some $k>0$ and for sufficiently many yet finite $\g\in\pi_{1}(S)$.
Then $\rho_{1}$ and $\rho_{2}$ are conjugate in ${\rm PSL}(2,\R)$. 
\end{thm}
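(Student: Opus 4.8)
\emph{Proof proposal.} The plan is to translate the hypothesis on translation lengths into an equality of $\mathrm{SL}(2)$-characters and then invoke rigidity of the $\mathrm{SL}(2,\mathbb{C})$-character variety. The starting point is the elementary length--trace dictionary in $\mathrm{PSL}(2,\mathbb{R})$: a hyperbolic isometry $g$ with translation length $l[g]$ satisfies $|\mathrm{tr}(g)| = 2\cosh(l[g]/2)$, the trace being computed for any lift of $g$ to $\mathrm{SL}(2,\mathbb{R})$. Hence the hypothesis $l[\rho_1(\gamma)] = k\, l[\rho_2(\gamma)]$ becomes $|\mathrm{tr}(\rho_1(\gamma))| = 2\cosh(\tfrac{k}{2}\, l[\rho_2(\gamma)])$; in particular $\mathrm{tr}^2(\rho_1(\gamma))$ is a prescribed function of $\mathrm{tr}^2(\rho_2(\gamma))$ on the elements in question.

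First I would show that the scaling factor is $k=1$. If the length identity is available for all $\gamma\in\pi_1(S)$ this is immediate: $X_{\rho_1}$ and $X_{\rho_2}$ being finite-area hyperbolic surfaces, $\rho_1(\pi_1 S)$ and $\rho_2(\pi_1 S)$ are lattices, so their critical exponents both equal $1$ (their limit sets fill the boundary circle); comparing the exponential growth of $\#\{[\gamma] : l_i[\gamma]\le T\}$, whose counting functions differ by the reparametrization $T\mapsto T/k$, forces $1=\delta_{\rho_1}=\delta_{\rho_2}/k = 1/k$. If only finitely many $\gamma$ are at hand, one argues algebraically instead: along a rich family of pairs $\gamma_1,\gamma_2$ the traces obey the Fricke identities $\mathrm{tr}(AB)+\mathrm{tr}(AB^{-1})=\mathrm{tr}(A)\mathrm{tr}(B)$, and substituting $2\cosh(\tfrac k2\, l_{\rho_2}(\cdot))$ for each trace of $\rho_1$ turns these polynomial relations into $\cosh$-functional equations that, over enough such quadruples, hold only when $k=1$.

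With $k=1$ in hand, $\mathrm{tr}^2(\rho_1(\gamma)) = \mathrm{tr}^2(\rho_2(\gamma))$ on the relevant finite determining set of $\gamma$. Lift to $\widetilde{\rho}_i\colon\pi_1(S)\to\mathrm{SL}(2,\mathbb{R})$ --- unobstructed since $\pi_1(S_{g,n})$ is free for $n\ge1$, and harmless for $n=0$. The $\mathrm{SL}(2,\mathbb{C})$-character variety of a finitely generated group embeds into some $\mathbb{C}^N$ via finitely many trace functions (Fricke--Vogt--Horowitz; for a free group, traces of the generators and of their products two and three at a time), so one only needs to pin down the traces of this determining set. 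Propagating the sign ambiguity $\mathrm{tr}\mapsto\pm\mathrm{tr}$ through the Fricke identities, one twists each $\widetilde{\rho}_i$ by a homomorphism $\pi_1(S)\to\{\pm1\}$ so that $\mathrm{tr}(\widetilde{\rho}_1(\gamma))=\mathrm{tr}(\widetilde{\rho}_2(\gamma))$ on that set, the remaining sign patterns being excluded using discreteness and faithfulness of a Fuchsian representation (e.g.\ by choosing a geometric marking in which the relevant hyperbolic elements have positive trace). Thus $\widetilde{\rho}_1$ and $\widetilde{\rho}_2$ have the same $\mathrm{SL}(2,\mathbb{C})$-character. Since Zariski density makes them irreducible and an irreducible $\mathrm{SL}(2,\mathbb{C})$-representation of a finitely generated group is determined up to conjugacy by its character, there is $h\in\mathrm{SL}(2,\mathbb{C})$ with $\widetilde{\rho}_1=h\widetilde{\rho}_2 h^{-1}$; as both representations are real and irreducible, the one-dimensional intertwiner space is defined over $\mathbb{R}$, so $h$ may be taken real with nonzero determinant and, after rescaling, in $\mathrm{SL}(2,\mathbb{R})$ (up to the orientation-reversing ambiguity that any length spectrum inevitably carries). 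Projecting to $\mathrm{PSL}(2,\mathbb{R})$ gives the conclusion.

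\textbf{Main obstacle.} The technical core --- and the reason the statement needs genuine Fuchsian hypotheses rather than just two faithful representations --- is the passage from \emph{equality of the absolute values of traces on a finite determining set} (together with forcing $k=1$) to \emph{equality of the signed $\mathrm{SL}(2,\mathbb{C})$-characters}: resolving the global sign ambiguity compatibly with the Fricke relations is exactly where discreteness, faithfulness and Zariski density are used in an essential way, and it is the heart of Kim's argument in \cite{Kim:2001km}.
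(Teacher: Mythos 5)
The paper does not actually prove this statement: it is quoted as a special case of \cite[Theorem A]{Kim:2001km}, and the remark following it explicitly disclaims originality. So there is no in-paper argument to compare yours against; what can be judged is whether your sketch stands on its own. Its architecture --- translate lengths to traces via $|\mathrm{tr}(g)|=2\cosh(l[g]/2)$, cut down to a finite determining set of trace functions via the Fricke--Vogt embedding of the character variety, use that an irreducible $\mathrm{SL}(2,\mathbb{C})$-representation is determined up to conjugacy by its character, and descend to a real conjugator --- is the standard route, and for the only use this paper makes of the theorem (in the proof of Theorem \ref{thm:man-strctily-conv}, where $k=1$ is already known and the length identity holds for \emph{all} hyperbolic $\gamma$ of two finite-area representations) your outline would essentially suffice.

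As a proof of the theorem as stated, however, there are two genuine gaps, and they sit exactly at the decisive steps. First, the reduction to $k=1$: the critical-exponent comparison requires the length identity on infinitely many conjugacy classes and requires finite area (the hypothesis is only Zariski dense Fuchsian, where the exponents need not equal $1$), while the proposed alternative --- that the $\cosh$-functional equations coming from the Fricke identities "hold only when $k=1$" over enough quadruples --- is asserted, not proved. Second, and more seriously, the passage from $\mathrm{tr}^2(\rho_1(\gamma))=\mathrm{tr}^2(\rho_2(\gamma))$ on a finite determining set to equality of signed $\mathrm{SL}(2,\mathbb{C})$-characters: you correctly identify this as the heart of Kim's argument and then defer it to \cite{Kim:2001km}, but that is precisely the content of the theorem being proved, so the proposal is a reduction to the cited result rather than an independent proof. (A mitigating remark: since $\pm I$ acts trivially in $\mathrm{PSL}(2,\mathbb{R})$, twisting a lift by a character $\pi_1(S)\to\{\pm 1\}$ does not change the $\mathrm{PSL}(2,\mathbb{R})$-representation, so one can work directly in the $\mathrm{PSL}(2,\mathbb{C})$-character variety whose coordinates are squared traces of elements \emph{and of their products}; the residual work is to show the length data pins down the squared traces of the products, which is where discreteness and faithfulness genuinely enter. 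Also, length data only ever determines a representation up to conjugacy in the full isometry group, so the conclusion "conjugate in $\mathrm{PSL}(2,\mathbb{R})$" tacitly fixes orientations --- an imprecision already present in the paper's statement, not introduced by you.)
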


\begin{rem}
\ \begin{enumerate}[font=\normalfont]

\item A representation $\rho:\pi_{1}(S)\to{\rm PSL}(2,\R)$ is called
\textit{Zariski dense} if it is irreducible and $\rho(\pi_{1}(S))$
has no global fixed point on $\vbdy\H$. It is clear that finite area
Fuchsian representations are Zariski dense. 

\item The above result should be known before Kim, and \cite[Theorem A]{Kim:2001km}
is much more general than the above one. Nevertheless for the convenience
we quote \cite[Theorem A]{Kim:2001km}. 

\end{enumerate}
\end{rem}

\subsection{Countable State Markov Shifts\label{subsec:Countable-Markov}}

In this subsection we aim to introduce terminologies of thermodynamic
formalism for countable state (topological) Markov shifts. Reader
can find more details in Mauldin's and Urba\'nski's book \cite{Mauldin:2003dn}
and Sarig's note \cite{Sarig:2009wta}. 

Let ${\cal A}$ a countable set and $\mathbb{A}=(t_{ij})_{{\cal A}\times{\cal A}}$
be a matrix of zeros and ones with no columns or rows are all zeros.

\begin{defn}
[Countable State Markov Shift] The (one-sided) countable state Markov
shift with set of \textit{alphabet} (or states) ${\cal {\cal A}}$
and \textit{transition matrix} $\mathbb{A}$ is defined by 
\[
\TMS_{\mathbb{A}}:=\{x=(x_{i})\in{\cal A}^{\N}:t_{x_{n}x_{n+1}}=1\ {\rm \forall}n\in\N\}
\]
equipped with the topology generated by the collection of \textit{cylinders}
\[
[a_{0},...,a_{n}]:=\{x\in\TMS_{\mathbb{A}}:x_{i}=a_{i},0\leq i\leq n\}\ \ (n\in\N,a_{0},...,a_{n}\in{\cal A})
\]
and coupled the the left (shift) map $\sigma:(x_{0},x_{1},x_{2}...)\mapsto(x_{1},x_{2},...)$. 
\end{defn}

A \textit{word of length} $n$ on an alphabet $\mathcal{A}$ is a
finite sequence $(a_{0},a_{1},...,a_{n-1})\in{\cal A}^{n-1}$ for
all $n\in\N\backslash\{0\},$ and a word $(a_{0},a_{1},...,a_{n-1})$
is \textit{admissible} with respect to $\mathbb{A}=(t_{ab})_{\mathcal{A}\times\mathcal{A}}$
if $t_{a_{i}a_{j}}=1$. 

From now on we will omit the subscript $\mathbb{A}$ from $\TMS_{\mathbb{A}}$
and simply use $\TMS$ for one-sided Markov shifts because our discussion
here only focus on a fixed transition matrix. 

Recall that a Markov shift $(\TMS,\sigma)$ is\textit{ topologically
transitive} if for all $a,b\in\mathcal{A}$ there exists an admissible
word $(a,...,b)$, and is \textit{topological mixing} if for all $a,b\in\mathcal{A}$
there exists a number $N_{ab}$ such that for all $n\geq N_{ab}$
there exists an admissible word $(a,...,b)$ of length $n$.

Let $g:\TMS\to\R$ be a function. For $n\geq1,$ the $n$-th \textit{variation}
of $g$ is defined by 
\[
{\rm V}_{n}(g):=\sup\{|g(x)-g(y)|:\ x,y\in\TMS,x_{i}=y_{i}\ {\rm for\ }0\leq i\leq n-1\}.
\]
When $\sum_{n=0}^{\infty}{\rm V}_{n}(g)<\infty$ we say that $g$
has \textit{summable variations,} and in particular, we call $g$
a \textit{locally H\"{o}lder continuous function} if there exists
$C>0$ and $\theta\in(0,1)$ such that ${\rm V}_{n}(g)\leq C\cdot\theta^{n}$
for $n\geq1.$ 

We remark that when the set of alphabet ${\cal A}$ is finite the
Markov shift is called a \textit{subshift of finite type}, and in
that case $\TMS$ is a compact set. When $\mathcal{A}$ is infinite
$\TMS$ is no longer compact.  Nevertheless, countable state Markov
shifts with the following property can be studied similarly as in
the compact cases. 

\begin{defn}
[BIP] We say $(\TMS_{\mathbb{A}},\sigma)$ has the \textit{big image
and preimages (BIP) property }if there exists a finite collection
of states $s_{1},s_{2},...,s_{n}\in\mathcal{A}$ such that for every
state $s\in\mathcal{A}$ there are some $i,j\in\{1,2,..,n\}$ such
that $(s_{i},s)$, $(s,s_{j})$ are admissible. 
\end{defn}

\begin{defn}
[Topological Pressure for Countable State Markov Shifts] Let $(\TMS,\s)$
be a topologically mixing Markov shifts and $g:\TMS\to\R$ has summable
variations. The \textit{topological pressure} (or the \textit{Gurevich
pressure}) of $g$ is defined by 
\[
P(g):=\lim_{n\to\infty}\frac{1}{n}\log\sum_{x\in{\rm Fix}^{n}}e^{S_{n}g(x)}\mathds1{}_{[a]}(x),
\]
where ${\rm Fix}^{n}:=\{x\in\TMS:\sigma^{n}(x)=x\}$, $a\in\mathcal{A}$
is any state, and $S_{n}g(x)=g(x)+...+g(\s^{n-1}(x))$ is the $n$-th
ergodic sum of $g$. 
\end{defn}

Notice that the topological pressure is independent on the state $a\in\mathcal{A}$
(cf. \cite{Sarig:2009wta}). 

\begin{thm}
[Variational Principle; \cite{Sarig:1999wo} Theorem 3] Let $(\TMS,\s)$
be a topologically mixing Markov shifts and $g:\TMS\to\R$ has summable
variations. If $\sup g<\infty$ then 
\[
P(g)=\sup\{h_{\sigma}(m)+\int_{\TMS}g\dd m:\ m\in{\cal M_{\s}}\ {\rm and}\ -\int_{\TMS}g\dd m<\infty\}
\]
where $h_{\s}(m)$ is the measure theoretic entropy of $m$ and ${\cal M}_{\s}$
is the set of $\sigma-$invariant Borel probability measures on $\TMS$. 
\end{thm}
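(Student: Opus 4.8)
The plan is to establish the two inequalities separately, exactly as in the compact variational principle, inserting the Shannon--McMillan--Breiman and Birkhoff ergodic theorems at the two places where compactness is lost. I will use freely the facts recalled above as cited background: that the limit defining $P(g)$ exists in $(-\infty,+\infty]$ and is independent of the reference state. Both rest on the observation that, writing $Z_n(g,a):=\sum_{x\in{\rm Fix}^n,\,x_0=a}e^{S_ng(x)}$, the sequence $n\mapsto\log Z_n(g,a)$ is superadditive up to the fixed additive error $2\sum_j{\rm V}_j(g)<\infty$ (concatenate two loops based at $a$ and apply summable variations), so $\tfrac1n\log Z_n(g,a)$ converges to its supremum, which is $P(g)$.

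For the lower bound, $P(g)\ge h_\sigma(m)+\int g\dd m$ whenever $-\int g\dd m<\infty$, I would first reduce to $m$ ergodic: by the ergodic decomposition and affinity of $m\mapsto h_\sigma(m)$ and $m\mapsto\int g\dd m$ this is routine, noting that $\sup g<\infty$ together with $-\int g\dd m<\infty$ makes $g\in L^1(m)$. So let $m$ be ergodic, set $c:=\int g\dd m$ and $h:=h_\sigma(m)$, pick a state $a$ with $m([a])>0$, and fix $\varepsilon>0$. For all large $n$ I would produce a set $G_n\subseteq[a]$ with $m(G_n)\ge\tfrac12 m([a])$ on which, simultaneously: $m[x_0,\dots,x_{n-1}]\le e^{-n(h-\varepsilon)}$ (Shannon--McMillan--Breiman; replace $h$ by any prescribed $M$ when $h=\infty$); $S_\ell g(x)\ge \ell(c-\varepsilon)$ for every $\ell$ in the window $[n,(1+\varepsilon)n]$ (Birkhoff plus Egorov); and $R_n(x)\in[n,(1+\varepsilon)n]$, where $R_n(x)$ denotes the first return of $x$ to $[a]$ at or after time $n$. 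The last requirement is the crucial one and follows because the first-return time $\varphi_a$ to $[a]$ is $m$-integrable by Kac's formula, so $\sum_k m[\varphi_a>\varepsilon k]<\infty$ and the first Borel--Cantelli lemma forces the return gaps along a generic orbit to be eventually sublinear. For each $x\in G_n$ the word $w=(x_0,\dots,x_{R_n(x)-1})$ is admissible, begins with $a$, and closes up --- since $x_{R_n(x)}=a$ --- to a periodic point $w^\infty\in{\rm Fix}^{R_n(x)}\cap[a]$. Grouping these words by the common value $\ell_n$ of $R_n(x)$, there are at least $e^{n(h-\varepsilon)}$ of them up to a factor polynomial in $n$; each satisfies $S_{\ell_n}g(w^\infty)\ge S_{\ell_n}g(x)-\sum_j{\rm V}_j(g)\ge\ell_n(c-2\varepsilon)$ for $n$ large; and each periodic orbit accounts for at most $\ell_n$ of them. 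Hence $Z_{\ell_n}(g,a)$ is at least $e^{n(h-\varepsilon)+\ell_n(c-2\varepsilon)}$ up to a polynomial factor, and since $\ell_n/n\to1$, taking $\tfrac1{\ell_n}\log$ gives $P(g)\ge h+c-3\varepsilon$; let $\varepsilon\downarrow0$. (Alternatively one may induce on $[a]$, where the first-return system is a countable full shift, and argue there.)

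For the upper bound I would use the finite-approximation property $P(g)=\sup_Y P_Y(g)$, the supremum over compact (finite-alphabet) subshifts $Y\subseteq\TMS$. The inequality $\ge$ is clear, since periodic orbits of $Y$ are periodic orbits of $\TMS$. For $\le$: given $\varepsilon>0$, choose $n$ with $\tfrac1n\log Z_n(g,a)>P(g)-\varepsilon$ (or $>M$ if $P(g)=\infty$); since $Z_n(g,a)$ is a sum of nonnegative terms indexed by the length-$n$ loops at $a$, finitely many of them already carry half of that sum (respectively total more than $e^{nM}$), and the subshift $Y$ these loops generate in $\TMS$ is compact with $\tfrac1n\log Z_n^Y(g,a)\ge P(g)-\varepsilon-\tfrac{\log 2}{n}$; superadditivity up to a constant on $Y$ then gives $P_Y(g)\ge P(g)-2\varepsilon$ for $n$ large, so $\sup_Y P_Y(g)\ge P(g)$. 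On the compact set $Y$, $g$ is continuous, hence bounded, so the classical variational principle (see \cite{Sarig:2009wta}) yields $P_Y(g)=\sup_{\mu\in\mathcal M_\sigma(Y)}\big(h_\sigma(\mu)+\int g\dd\mu\big)$, and every such $\mu$ lies in $\mathcal M_\sigma(\TMS)$ with $-\int g\dd\mu<\infty$. Taking the supremum over $Y$ gives $P(g)\le\sup\{h_\sigma(m)+\int g\dd m\}$, which completes the proof.

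The step I expect to be the main obstacle is the closing-up in the lower bound when $g$ is unbounded below --- precisely the situation of the geometric potentials near cusps. One cannot close an orbit by grafting on an arbitrary connecting segment, because that segment could carry an enormous negative $g$-mass. The resolution is to close at a genuine \emph{return} of the orbit itself to the fixed positive-measure cylinder $[a]$, occurring at time $R_n=n(1+o(1))$ --- the sublinearity of $R_n-n$ being exactly what Kac's formula and Borel--Cantelli provide --- so that Birkhoff's theorem, applied to the whole length-$R_n$ orbit, already controls the full ergodic sum, closing segment included. A secondary point deserving care is the finite-approximation property, which uses only that the defining series has nonnegative terms (so finite partial sums approximate its value) together with the constant-error superadditivity of $\log Z_n$.
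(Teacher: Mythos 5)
This theorem is quoted background: the paper gives no proof and simply cites \cite{Sarig:1999wo}, Theorem 3, so there is no in-paper argument to compare against. Your proposal is essentially the standard proof from that reference: the upper bound via the approximation property $P(g)=\sup_Y P_Y(g)$ over compact subshifts (Sarig's Theorem 2, proved exactly as you say, from nonnegativity of the terms of $Z_n(g,a)$ plus superadditivity up to $2\sum_j V_j(g)$) followed by the classical variational principle on each compact $Y$; and the lower bound by producing, for an ergodic $m$, roughly $e^{n(h-\varepsilon)}$ admissible loops at a positive-measure state $[a]$ of length $n(1+o(1))$ and closing them at genuine returns so that summable variations controls the error in the ergodic sum. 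Your diagnosis of the delicate point --- that one must close at an actual return to $[a]$ rather than graft on a connector, precisely because $g$ is unbounded below --- is correct, and the Kac/Borel--Cantelli control of $R_n-n$ is the right tool.

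One step of the sketch needs more care than you give it: the Shannon--McMillan--Breiman input. The generating partition into $1$-cylinders is countable and may have infinite static entropy $H_m(\alpha)$ even when $h_\sigma(m)<\infty$, so SMB cannot be applied to it directly; the standard repair is to take a finite coarsening $\beta$ of the alphabet partition with $h_\sigma(m,\beta)$ close to $h_\sigma(m)$ (or exceeding any prescribed $M$ when $h_\sigma(m)=\infty$), count distinct $\beta$-names via SMB for $\beta$, and observe that distinct $\beta$-names force distinct admissible $\alpha$-words, which is all the closing-up argument needs. With that adjustment the argument goes through.
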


We want to remark that although Mauldin and Urba\'{n}ski, and Sarig
defined countable state Markov shifts and the topological pressure
differently. However, when the Markov shift is topologically mixing
and has the BIP property, their definition are the same (cf. \cite[Section 7]{Mauldin:2001dn}).
Since in this paper we only focus on topologically mixing Markov shifts
with the BIP property, we will use both results from Mauldin and Urba\'{n}ski,
and Sarig.

Recall that a measure $m\in\mathcal{M_{\s}}$ is called an \textit{equilibrium
state} for $g$ if $P(g)=h_{\s}(m)+\int g\dd m$. A measure $\nu\in{\cal M}_{\s}$
is called a \textit{Gibbs measure} for $g$ if there exists constants
$G>1$ and $P$ such that for all cylinder $[a_{0},...,a_{n-1}]$
and for very $x\in[a_{0},...,a_{n-1}]$ we have 
\[
\frac{1}{G}\leq\frac{\nu[a_{0},a_{1},...,a_{n-1}]}{\exp[S_{n}g(x)-nP]}\leq G.
\]

\begin{rem}
We would like to point out that there are subtle differences between
Gibbs states and equilibrium states. Every equilibrium state is a
Gibbs state but not vice versa. More precisely, if $g$ is locally
H\"older with finite pressure and $\sup g<\infty$. Then $g$ has
a unique Gibbs measure $\nu_{g}$, and $g$ has at most one equilibrium
state. Furthermore, with the additional condition $-\int g\dd\nu_{g}<\infty$,
we know the unique Gibbs state $\nu_{g}$ is the equilibrium state
for $g$ (cf. \cite[Theorem 4.5, 4.6, 4.9]{Sarig:2009wta} and \cite[Theorem 2.2.4, 2.2.9]{Mauldin:2003dn}). 
\end{rem}

Two functions $f,g:\TMS\to\R$ are \textit{cohomologus}, denoted by
$f\sim g$, if there exists a function $h:\TMS\to\R$ such that $f=g+h-h\circ\sigma$
where $h$ is called a \textit{transition function}. The following
theorem shows that the thermodynamic data are invariant in each cohomologus
class of locally H\"older continuous functions.

\begin{thm}
\cite[Theorem 2.2.7]{Mauldin:2003dn} \label{thm:props-of-Gibbs-measures}Suppose
$(\TMS,\sigma)$ is topologically mixing, and $f,g:\TMS\to\R$ are
locally H\"older continuous function with Gibbs measures $\nu_{f}$
and $\nu_{g}$, respectively. Then the following are equivalent: \begin{enumerate}[font=\normalfont]

\item $\nu_{f}=\nu_{g}$.

\item$($\rm{Liv\v{s}ic Theorem}$)$ There exists a constant $R>0$
such that $\forall$ $n\geq1$ and $x\in{\rm Fix}^{n}$ we have $S_{n}f(x)-S_{n}g(x)=nR$. 

\item $f-g$ is cohomologus to a constant $R$ via a bounded H\"older
continuous transition function.\\
Moreover, when above assertions are true, then $R=P(f)-P(g)$.

\end{enumerate}
\end{thm}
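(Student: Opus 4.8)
The plan is to establish the cycle of implications $(1)\Rightarrow(2)\Rightarrow(3)\Rightarrow(1)$, and then deduce the final assertion $R=P(f)-P(g)$. Since $(\TMS,\sigma)$ is topologically mixing and $f,g$ are locally H\"older with (existing) Gibbs measures, all standard tools of the countable-state thermodynamic formalism are available. I would first record the basic Gibbs estimate: for $h$ locally H\"older with Gibbs measure $\nu_h$ and pressure $P(h)$, there is $G>1$ with $G^{-1}\le \nu_h[a_0,\dots,a_{n-1}]\,/\,\exp[S_nh(x)-nP(h)]\le G$ for every cylinder and every $x$ in it; this will be the workhorse for comparing $\nu_f$ and $\nu_g$ along cylinders.

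For $(1)\Rightarrow(2)$: assume $\nu_f=\nu_g=:\nu$. Apply the Gibbs inequality for both $f$ and $g$ to a common cylinder $[a_0,\dots,a_{n-1}]$ containing a given $x\in\mathrm{Fix}^n$; dividing the two estimates gives $G^{-2}\le \exp[S_nf(x)-S_ng(x)-n(P(f)-P(g))]\le G^2$, hence $|S_nf(x)-S_ng(x)-n(P(f)-P(g))|\le 2\log G$ uniformly in $n$ and in $x\in\mathrm{Fix}^n$. Next I use periodicity to upgrade this bounded estimate to an exact identity: for $x\in\mathrm{Fix}^n$ the point $x$ is also in $\mathrm{Fix}^{kn}$ for every $k\ge1$, and $S_{kn}f(x)-S_{kn}g(x)=k\big(S_nf(x)-S_ng(x)\big)$ while the right comparison term scales as $kn(P(f)-P(g))$; letting $k\to\infty$ forces $S_nf(x)-S_ng(x)=n\big(P(f)-P(g)\big)$ exactly. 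Setting $R:=P(f)-P(g)$ yields (2) (and simultaneously pins down the value of $R$ for the final clause).

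For $(2)\Rightarrow(3)$: this is the Liv\v sic cohomology construction, which goes through verbatim in the countable-state setting under topological mixing and local H\"older regularity of $\varphi:=f-g-R$ (note $\varphi$ has summable variations and $S_n\varphi$ vanishes on all periodic points). Fix a reference state $a\in\mathcal A$ and a point $p$ in the cylinder $[a]$; using topological mixing choose, for each $x\in\TMS$, a point $y=y(x)$ asymptotic to $x$ whose orbit returns near $p$, and define $h(x):=\sum_{k\ge0}\varphi(\sigma^k x)-\varphi(\sigma^k y)$. The local H\"older control gives absolute convergence and a uniform bound on $h$, and a telescoping/approximation argument using the vanishing of $S_n\varphi$ on periodic orbits shows $\varphi=h-h\circ\sigma$, i.e. $f-g=R+h-h\circ\sigma$ with $h$ bounded and H\"older. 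For $(3)\Rightarrow(1)$: if $f-g=R+h-h\circ\sigma$ with $h$ bounded, then $S_nf-S_ng=nR+h-h\circ\sigma^n$, so $S_nf(x)=S_ng(x)+nR$ on every $x\in\mathrm{Fix}^n$; plugging into the Gibbs characterization shows $\exp[S_nf(x)-nP(f)]$ and $\exp[S_ng(x)-n(P(g)+R)]$ differ by a factor controlled by $\|h\|_\infty$, so $\nu_g$ is a Gibbs measure for $f$ (with pressure $P(g)+R$); by uniqueness of the Gibbs measure, $\nu_f=\nu_g$ and also $P(f)=P(g)+R$, re-deriving $R=P(f)-P(g)$. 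I expect the main obstacle to be the $(2)\Rightarrow(3)$ step: one must be careful that the mixing times $N_{ab}$ and the bounded-distortion constants interact correctly in the noncompact setting so that the series defining $h$ converges uniformly and the transition function is genuinely bounded (not merely finite), but this is exactly the content of the cited Mauldin--Urba\'nski result and requires no new idea beyond the summable-variation/local-H\"older hypotheses already in force.
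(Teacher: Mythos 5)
The paper does not actually prove this statement --- it is quoted verbatim from Mauldin--Urba\'nski \cite[Theorem 2.2.7]{Mauldin:2003dn} --- so there is no in-paper argument to compare against; your cycle $(1)\Rightarrow(2)\Rightarrow(3)\Rightarrow(1)$ is precisely the standard textbook proof of that cited result. Each step is sound: dividing the two Gibbs estimates on the length-$kn$ cylinder of a period-$n$ point and letting $k\to\infty$ gives $(1)\Rightarrow(2)$ with $R=P(f)-P(g)$; the Liv\v{s}ic construction gives $(2)\Rightarrow(3)$, where the only delicate point in the countable-state setting is (as you correctly flag) the uniform convergence and boundedness of the transfer function $h$, which rests on the finite irreducibility/BIP structure implicit in the existence of Gibbs measures; and uniqueness of the Gibbs measure gives $(3)\Rightarrow(1)$ together with $P(f)=P(g)+R$.
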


We remark that we can define a two-sided countable state Markov shift
$\Sigma_{\mathbb{A}}$ as 
\[
\SI_{\mathbb{A}}:=\{x=(x_{i})\in{\cal A}^{\Z}:t_{x_{n}x_{n+1}}=1\ {\rm \forall}n\in\Z\}
\]
and define similarly all the thermodynamics data. Notice that if a
potential on a two-sided shift space $(\Sigma,\sigma)$ is only depending
on its future coordinate, then to understand the associated thermodynamics
data, it is sufficient to study its behavior on the one-sided shift
$(\TMS,\sigma)$. For a two-sided sequence $(...,a,\dot{b},c,...)$,
$\dot{b}$ means $b$ is at the zero-th coordinate, i.e., $a=x_{-1},b=x_{0},c=x_{1}$. 

Let $(\TMS_{\mathbb{}},\sigma)$ be a topologically mixing countable
state Markov shift with the\textit{ }BIP property. In the following,
we list a few theorems about the analyticity of pressure and phase
transition phenomena.

\begin{thm}
[Analyticity of Pressure; \cite{Mauldin:2003dn} Theorem 2.6.12 and 2.6.13, \cite{Sarig:2003hl} Corollary 4]\label{thm:analyticity-pressure}
Suppose $t\mapsto f_{t}$ is an real analytic family of locally H\"{o}lder
continuous functions for $t\in\Delta$ where $\Delta$ is an interval
of $\R$ and $P(f_{t})<\infty$ for $\Delta$. Then the pressure function
$t\mapsto P(f_{t})$, for $t\in\Delta$, is also real analytic. Moreover,
the derivative of the pressure is 
\[
\left.\frac{{\rm d}}{{\rm d}t}P(f_{t})\right|_{t=0}=\int_{\TMS}\dot{f}_{0}\dd\nu_{f_{0}},
\]
where $\nu_{f_{0}}$ is the unique Gibbs state for $f_{0}$. 
\end{thm}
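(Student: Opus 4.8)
The plan is to reduce everything to the transfer operator formalism and then invoke analytic perturbation theory for its leading eigenvalue. First I would recall that, because $(\TMS,\sigma)$ is topologically mixing with the BIP property, each locally H\"older $f_t$ with $P(f_t)<\infty$ and (after normalizing) $\sup f_t<\infty$ has an associated Ruelle transfer operator $L_{f_t}$ acting on a suitable Banach space of locally H\"older observables,
\[
(L_{f_t}\varphi)(x) = \sum_{\sigma y = x} e^{f_t(y)}\varphi(y).
\]
By the Ruelle--Perron--Frobenius theorem in the countable-state BIP setting (Sarig, Mauldin--Urba\'nski), $L_{f_t}$ is bounded, has a spectral gap, and its leading eigenvalue is exactly $e^{P(f_t)}$, with a one-dimensional eigenspace spanned by a strictly positive eigenfunction $h_{f_t}$ and a left eigen-measure $\mu_{f_t}$; the Gibbs state is $\nu_{f_t} = h_{f_t}\,\mu_{f_t}$ after normalization. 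The key structural input is the spectral gap: $e^{P(f_t)}$ is a \emph{simple, isolated} point of the spectrum of $L_{f_t}$.

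Next I would show that $t\mapsto L_{f_t}$ is a real-analytic family of bounded operators on the fixed Banach space. This is where the local H\"older hypothesis and the uniform control $P(f_t)<\infty$ on the interval $\Delta$ do the work: analyticity of $t\mapsto f_t$ (in the appropriate H\"older norm, together with summable-variation/tail control ensuring $L_{f_t}$ stays bounded) lets one expand $e^{f_t(y)} = e^{f_0(y)}\sum_{k\geq 0}\frac{1}{k!}(f_t-f_0)(y)^k$ and check term-by-term that the resulting operator series converges in operator norm for $t$ near $0$. Once $t\mapsto L_{f_t}$ is analytic and $e^{P(f_0)}$ is an isolated simple eigenvalue, Kato--Rellich analytic perturbation theory gives a real-analytic branch of simple eigenvalues $t\mapsto \lambda(t)$ with $\lambda(0)=e^{P(f_0)}$, together with analytic eigenprojections $t\mapsto \Pi_t$. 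Since the leading eigenvalue varies continuously and remains the spectral radius / Gurevich pressure exponential on $\Delta$ (here one uses that $P(f_t)<\infty$ throughout, so no phase transition intervenes and the branch stays the genuine leading eigenvalue), we get $\lambda(t)=e^{P(f_t)}$, and hence $t\mapsto P(f_t)=\log\lambda(t)$ is real analytic.

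Finally, for the derivative formula I would differentiate the eigenvalue relation $L_{f_t}h_{f_t} = \lambda(t) h_{f_t}$ at $t=0$ and pair against the left eigen-measure $\mu_{f_0}$. Writing $\dot L_0$ for the derivative of $L_{f_t}$ at $t=0$, one has $\dot L_0 = M_{\dot f_0}\circ L_{f_0}$ where $M_{\dot f_0}$ is multiplication by $\dot f_0$ (from differentiating $e^{f_t}$); applying $\mu_{f_0}$ to $\dot L_0 h_{f_0} + L_{f_0}\dot h_0 = \dot\lambda(0) h_{f_0} + \lambda(0)\dot h_0$ and using $\mu_{f_0}\circ L_{f_0} = \lambda(0)\mu_{f_0}$ cancels the $\dot h_0$ terms and yields $\dot\lambda(0) = \lambda(0)\int \dot f_0\, h_{f_0}\,\dd\mu_{f_0} = \lambda(0)\int \dot f_0\,\dd\nu_{f_0}$, so $\frac{\dd}{\dd t}P(f_t)|_{t=0} = \dot\lambda(0)/\lambda(0) = \int_{\TMS}\dot f_0\,\dd\nu_{f_0}$. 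The main obstacle I anticipate is purely the non-compact bookkeeping: verifying that the Banach space can be chosen uniformly for all $t$ in a neighborhood, that $L_{f_t}$ is genuinely bounded with a spectral gap there (this is exactly where one must ensure the perturbation does not destroy the finiteness of the pressure, i.e.\ stays away from the phase-transition threshold), and that the operator-norm convergence of the exponential series is legitimate given only local H\"older regularity. Everything else is the standard compact-case argument transplanted via Sarig's and Mauldin--Urba\'nski's machinery, which is why in the paper I would simply cite \cite{Mauldin:2003dn,Sarig:2003hl} for the statement and only flag these non-compact subtleties.
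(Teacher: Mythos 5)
The paper does not provide a proof of this theorem --- it is stated as a background result and cited directly to Mauldin--Urba\'nski (Theorem 2.6.12, 2.6.13) and Sarig (2003, Corollary 4). Your proposal correctly reconstructs the argument those references use: transfer operator on a space of locally H\"older observables, spectral gap via the BIP Ruelle--Perron--Frobenius theorem, operator-norm analyticity of $t\mapsto L_{f_t}$, Kato analytic perturbation of the isolated simple leading eigenvalue $e^{P(f_t)}$, and the first-derivative formula by pairing $\dot L_0 h_{f_0}+L_{f_0}\dot h_0=\dot\lambda(0)h_{f_0}+\lambda(0)\dot h_0$ with the conformal measure. One remark worth making precise rather than just flagging: in the BIP + locally H\"older setting, $P(f)<\infty$ already forces $Z_1(f)=\sum_{a}e^{\sup_{[a]}f}<\infty$ (this is the paper's quoted \cite[Theorem 2.1.9]{Mauldin:2003dn} equivalence), and that summability is exactly what makes $L_f$ a bounded operator with a spectral gap --- so your ``after normalizing $\sup f_t<\infty$'' aside is not an extra hypothesis but a consequence. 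Also, for the final integral $\int \dot f_0\,\dd\nu_{f_0}$ to be finite one wants $\dot f_0$ bounded (or at least $\nu_{f_0}$-integrable); the paper only ever applies this with a bounded perturbation direction (cf.\ Corollary \ref{cor:derivative-pressure}), so that restriction is implicit in how the theorem is used.
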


\begin{thm}
[Phase Transition; \cite{Sarig:1999wo,Sarig:2001bj}, \cite{Mauldin:2003dn}]

Let $g:\TMS\to\R$ be a locally Hölder continuous function with $g>0$.
Then there exists $s_{\infty}>0$ such that 
\[
P_{\sigma}(-tg)=\begin{cases}
\infty & \mbox{if}\ t<s_{\infty},\\
\mbox{real analytic} & \mbox{if}\ t>s_{\infty},
\end{cases}
\]
where $t\mapsto P_{\sigma}(-tg)$ is the pressure function. Moreover,
$-tg$ has a unique Gibbs state $\nu_{-tg}$ for $t>s_{\infty}$.
\end{thm}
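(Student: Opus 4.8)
The goal is to show that for a locally Hölder continuous $g:\TMS\to\R$ with $g>0$ on a topologically mixing countable state Markov shift with the BIP property, there is a threshold $s_\infty\ge 0$ separating a region where $P_\sigma(-tg)=\infty$ from a region where it is finite and real analytic. The strategy is to combine three facts we are entitled to use: (i) monotonicity and convexity of $t\mapsto P_\sigma(-tg)$; (ii) the Analyticity of Pressure theorem (Theorem \ref{thm:analyticity-pressure}), which upgrades finiteness on an interval to real analyticity there; and (iii) the variational principle together with the structure of the Gurevich pressure for BIP shifts, which controls the unique Gibbs state on the finite region. The only genuinely new content is locating $s_\infty$ and establishing the dichotomy at its two sides; everything else is assembly.

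\textbf{Step 1: Monotonicity and convexity.} First I would record that $t\mapsto P_\sigma(-tg)$ is nonincreasing, since $g>0$ and $-t_1 g\ge -t_2 g$ pointwise for $t_1\le t_2$, and that it is convex in $t$ by Hölder's inequality applied to the partition sums $\sum_{x\in\mathrm{Fix}^n}e^{-tS_ng(x)}\mathds1_{[a]}(x)$ (this is the standard convexity of pressure in a linear parameter, valid for the Gurevich pressure as well). Consequently the set $\{t\ge 0:\ P_\sigma(-tg)<\infty\}$ is an interval, say $(s_\infty,\infty)$ or $[s_\infty,\infty)$, and I would set $s_\infty:=\inf\{t\ge 0:\ P_\sigma(-tg)<\infty\}$, with $s_\infty\ge 0$. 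It remains to show (a) the interval is nonempty, i.e. $s_\infty<\infty$, and (b) $P_\sigma(-tg)=+\infty$ strictly below $s_\infty$, i.e. the function does not silently become finite there.

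\textbf{Step 2: Finiteness for large $t$ and analyticity above the threshold.} To see $s_\infty<\infty$, I would use the BIP property: fix the finite ``magnet'' set $s_1,\dots,s_n\in\mathcal A$ from the BIP definition, and estimate $Z_n(-tg,[a]):=\sum_{x\in\mathrm{Fix}^n}e^{-tS_ng(x)}\mathds1_{[a]}(x)$ by grouping periodic orbits according to the states they visit. Because $g$ is locally Hölder with $g>0$, there is $c>0$ with $\inf_{[b]}g>0$ on each cylinder and, more usefully, the ``first return'' style bound: one shows $\sum_{b\in\mathcal A}\sup_{x\in[b]}e^{-tg(x)}<\infty$ for $t$ large — this is exactly the condition that forces convergence of the partition function — and from this, summing the geometric-like series over admissible words of length $n$ anchored in the BIP set, one gets $Z_n(-tg,[a])\le C^n$ for some finite $C=C(t)$, hence $P_\sigma(-tg)\le \log C<\infty$. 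Once $P_\sigma(-tg)<\infty$ on the open interval $(s_\infty,\infty)$, Theorem \ref{thm:analyticity-pressure} applies verbatim to the real analytic family $t\mapsto -tg$ (each $-tg$ is locally Hölder with the same constants up to the factor $t$) and yields that $t\mapsto P_\sigma(-tg)$ is real analytic on $(s_\infty,\infty)$; the existence and uniqueness of the Gibbs state $\nu_{-tg}$ for $t>s_\infty$ follows from the cited Gibbs-measure theory for locally Hölder potentials with finite pressure and $\sup(-tg)=-t\inf g<\infty$ (Theorem \ref{thm:props-of-Gibbs-measures} and the surrounding remarks).

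\textbf{Step 3: Infinite pressure strictly below the threshold.} This is the main obstacle, because a priori convexity alone permits the pressure to jump from $+\infty$ to a finite value and stay finite on a half-open interval — the claim is precisely that on $[0,s_\infty)$ (equivalently $(-\infty,s_\infty)$ after noting the problem only asserts the statement for the stated range) it is identically $+\infty$, not merely ``$\ge$ something''. Here I would argue by the definition of $s_\infty$ as an infimum combined with lower semicontinuity/monotonicity: if $t<s_\infty$ then for every $t'$ with $t<t'<s_\infty$ we have, by definition of the infimum, $P_\sigma(-t'g)=+\infty$ or $t'$ is not in the finiteness set; chasing the definitions, the finiteness set being an interval unbounded above with infimum $s_\infty$ forces $P_\sigma(-tg)=+\infty$ for all $t<s_\infty$. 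The substantive point that needs the BIP/locally Hölder hypotheses is that the transition is genuinely ``$\infty$ vs.\ finite'' and not ``$\infty$ vs.\ $\infty$'' everywhere or a more exotic behavior: one must exhibit at least one $t$ with $P_\sigma(-tg)<\infty$ (done in Step 2) and one must rule out $P_\sigma(-tg)<\infty$ persisting down to $t=-\infty$ — but since $g>0$ and $P$ is nonincreasing, for $t$ very negative $-tg\to+\infty$ pointwise and the partition sums diverge, giving $P_\sigma(-tg)=+\infty$; hence $s_\infty$, as the infimum of a nonempty proper sub-interval, is finite and the dichotomy holds. I would then remark that $s_\infty=0$ exactly when $P_\sigma(0)=h_{\mathrm{top}}(\sigma)<\infty$ and the partition function already converges at $t=0$, and $s_\infty>0$ in the cusped geodesic-flow application of interest, which is why the theorem is invoked in this paper.

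\textbf{Remark on attribution.} The detailed estimates in Steps 2--3 are carried out in \cite{Sarig:1999wo,Sarig:2001bj} and \cite{Mauldin:2003dn}; I have only reorganized them around the monotonicity--convexity skeleton so that the two endpoints of the dichotomy are isolated cleanly.
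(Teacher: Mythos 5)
There is a genuine gap in Step 2, and it is the only step that carries real content. You claim that $\sum_{b\in\mathcal{A}}\sup_{x\in[b]}e^{-tg(x)}<\infty$ for $t$ large, and hence that $P_{\sigma}(-tg)<\infty$ for large $t$, ``exactly because'' $g$ is locally H\"older and positive. This does not follow from those hypotheses and is in fact false in general: take the full shift on a countably infinite alphabet (topologically mixing, BIP) and $g\equiv 1$. Then $\sum_{b}\sup_{[b]}e^{-tg}=\sum_{b}e^{-t}=\infty$ for every $t$, and $P_{\sigma}(-tg)=P_{\sigma}(0)-t=\infty$ for every $t$, so no finite threshold exists. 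Finiteness of $P_{\sigma}(-tg)$ for some $t$ is equivalent (under BIP, by the criterion $Z_{1}(f)=\sum_{x_{0}}e^{\sup_{[x_{0}]}f}<\infty$ quoted in the paper from Mauldin--Urba\'nski) to a growth condition on $g$ along the alphabet, which is an \emph{additional} hypothesis, not a consequence of $g>0$ and local H\"older continuity. The honest form of the dichotomy either allows $s_{\infty}=+\infty$ or assumes $P_{\sigma}(-t_{0}g)<\infty$ for some $t_{0}$; and separately, the assertion $s_{\infty}>0$ requires divergence of $Z_{1}(-tg)$ for small $t>0$, which also needs the growth of $g$ to be slow enough (e.g.\ it fails if $g$ grows linearly in an enumeration of the states). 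Your Step 3 reduction to ``the finiteness set is an up-interval with infimum $s_{\infty}$'' is correct but only restates the definition; it cannot repair Step 2.

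For context: the paper does not prove this statement at all --- it is quoted as background from Sarig and Mauldin--Urba\'nski --- and precisely because the general theorem gives no handle on where (or whether) the transition occurs, the paper re-does the finiteness analysis from scratch for the potentials it actually cares about. In Theorem \ref{thm:phase-transition} the value $s_{\infty}=\frac{1}{2(a+b)}$ is located by combining the criterion $Z_{1}(f)<\infty\iff P(f)<\infty$ with the two-sided estimate $\tau(x)=2\ln|x_{0}|+O(1)$ on type II cylinders from Proposition \ref{prop:Geo_Poten_II}, together with the fact that there are only finitely many shapes of cylinders of each word-length. That estimate is exactly the ingredient your Step 2 is missing. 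The remaining parts of your sketch (monotonicity and convexity in $t$, analyticity on the finiteness interval via Theorem \ref{thm:analyticity-pressure}, and existence/uniqueness of the Gibbs state from $\sup(-tg)<\infty$ and finite pressure) are correct as assembly, but they are the routine part.
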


Let $f:\TMS\to\R$ be a locally H\"older continuous function and
$m\in{\cal M}_{\s}$ is an invariant measure. Recall that the \textit{variance}
${\rm Var}(f,m)$ of $f$ with respect to $m$ is defined by 
\[
{\rm Var}(f,m):=\lim_{n\to\infty}(\frac{1}{n}\int_{\TMS}(S_{n}f-\int_{\TMS}f\dd m)^{2})^{\frac{1}{2}}.
\]

Using Theorem \ref{thm:analyticity-pressure} and \cite[Theorem 5.10, 5.12]{Sarig:2009wta}
(or \cite[Theorem 2.6.14, Lemma 4.8.8]{Mauldin:2003dn}, we have the
following corollary.

\begin{cor}
[Derivatives of Pressure]\label{cor:derivative-pressure} Suppose
$f+tg$ is a family of locally H\"older continuous functions with
finite pressure for $t\in(-\ep,\ep)$. If $g$ is bounded then 
\[
P(f+tg)=P(f)+t\cdot\int_{\TMS}f\dd\nu_{f}+\frac{t^{2}}{2}\cdot{\rm Var}(g,\nu_{f})+o(t^{2})
\]
 where $\nu_{f}$ is the Gibbs measure for $f$. Moreover, ${\rm Var}(g,\nu_{f})=0$
if and only if $g$ is cohomologus to zero. 
\end{cor}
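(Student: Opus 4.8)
The plan is to deduce everything from the analyticity of the Gurevich pressure (Theorem~\ref{thm:analyticity-pressure}), the standard identification of the second derivative of pressure with an asymptotic variance, and Taylor's theorem. \emph{Step 1: reduce to an analytic pressure function.} The curve $t\mapsto f+tg$ is affine, hence real analytic, in the Banach space of bounded locally H\"older continuous functions: by hypothesis each $f+tg$ is locally H\"older, so $g=\tfrac{2}{\ep}\bigl((f+\tfrac{\ep}{2}g)-f\bigr)$ is locally H\"older as well (the variations $\mathrm{V}_{n}$ are subadditive, and a difference of two geometrically decaying sequences still decays geometrically), while $g$ is bounded by assumption, so in particular $g\in L^{2}(\nu_{f})$. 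Since moreover $P(f+tg)<\infty$ throughout $(-\ep,\ep)$, Theorem~\ref{thm:analyticity-pressure} applies with $\Delta=(-\ep,\ep)$ and shows that $\phi(t):=P(f+tg)$ is real analytic there. In particular $\phi$ is $C^{2}$ near $0$, so $\phi(t)=\phi(0)+\phi'(0)\,t+\tfrac12\phi''(0)\,t^{2}+o(t^{2})$ as $t\to0$.

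\emph{Step 2: identify the Taylor coefficients.} The derivative formula in Theorem~\ref{thm:analyticity-pressure}, applied at the base point $0$ with $\dot f_{0}=g$, gives $\phi'(0)=\int_{\TMS}g\dd\nu_{f}$ (this is the linear term in the asserted expansion). For the quadratic coefficient I would invoke the second-order companion of that result, \cite[Theorem 5.10 and 5.12]{Sarig:2009wta} (equivalently \cite[Theorem 2.6.14 and Lemma 4.8.8]{Mauldin:2003dn}): for bounded locally H\"older $g$ one has $\phi''(0)=\mathrm{Var}(g,\nu_{f})$. Conceptually this holds because $\phi'(t)=\int_{\TMS}g\dd\nu_{f+tg}$ and $t\mapsto\nu_{f+tg}$ is differentiable through the analytic perturbation theory of the Ruelle transfer operator $\mathcal{L}_{f+tg}$ on that Banach space --- the very mechanism underlying Theorem~\ref{thm:analyticity-pressure} --- but I would cite the statement rather than redo the computation. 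Substituting both coefficients into the Taylor expansion gives
\[
P(f+tg)=P(f)+t\int_{\TMS}g\dd\nu_{f}+\frac{t^{2}}{2}\,\mathrm{Var}(g,\nu_{f})+o(t^{2}).
\]

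\emph{Step 3: the rigidity clause.} For the ``if'' direction I would argue directly: if $g=c+h-h\circ\sigma$, where $h$ may be taken bounded and locally H\"older by the Liv\v{s}ic regularity theory for locally H\"older potentials over mixing Markov shifts with Gibbs measures (cf.\ Theorem~\ref{thm:props-of-Gibbs-measures} and \cite[Theorem 2.2.7]{Mauldin:2003dn}), then $\sigma$-invariance of $\nu_{f}$ forces $c=\int_{\TMS}g\dd\nu_{f}$, while $S_{n}g-nc=h-h\circ\sigma^{n}$ stays uniformly bounded, so the ergodic sums of $g-c$ are bounded and $\mathrm{Var}(g,\nu_{f})=0$. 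For the ``only if'' direction I would use the central limit theorem for Gibbs measures from the same references (\cite[Theorem 5.10 and 5.12]{Sarig:2009wta}, \cite[Section 4.8]{Mauldin:2003dn}): $\mathrm{Var}(g,\nu_{f})>0$ unless $g$ is cohomologous to a constant; equivalently, by strict convexity of $t\mapsto P(f+tg)$ off that cohomology class, $\phi''(0)=0$ forces $g$ to differ from a constant by a coboundary. Hence $\mathrm{Var}(g,\nu_{f})=0$ precisely when $g$ is cohomologous to a constant, i.e.\ when $g-\int_{\TMS}g\dd\nu_{f}$ is cohomologous to zero.

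The only point I expect to require genuine care is the hypothesis-checking feeding Steps~1 and~2: one must ensure that the perturbed potentials $f+tg$ remain in the class for which the transfer operator retains a spectral gap and the Gurevich pressure stays finite throughout a neighbourhood of $0$, so that the cited analyticity and variance identities are applicable; once that is in place the expansion is just Taylor's theorem together with two known derivative formulas, and the rigidity is the CLT dichotomy. The remaining subtlety is purely bookkeeping around conventions --- writing the linear coefficient as $\int_{\TMS}g\dd\nu_{f}$ and reading ``cohomologous to zero'' as ``cohomologous to the constant $\int_{\TMS}g\dd\nu_{f}$'' --- which I would settle with a clarifying sentence in the final write-up.
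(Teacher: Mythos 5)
Your proposal is correct and follows essentially the same route as the paper, which states this corollary without proof as a direct consequence of Theorem \ref{thm:analyticity-pressure} together with the cited second-derivative/variance results of Sarig and Mauldin--Urba\'nski; your Steps 1--3 simply fill in the details those citations are meant to cover. You also correctly repair the typo in the stated linear term (it should be $t\int_{\TMS}g\dd\nu_{f}$, not $t\int_{\TMS}f\dd\nu_{f}$) and rightly read ``cohomologous to zero'' as ``cohomologous to a constant,'' which is how the corollary is used later in the paper where the perturbation has zero mean.
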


\subsection{Suspension Flows over Countable State Markov Shifts}

Let $(\TMS,\sigma)$ be a topologically mixing countable state Markov
shift with the BIP property and $\tau:\TMS\to\R^{+}$ be bounded away
from zero and locally H\"older continuous. The \textit{suspension
space} (relatively to $\tau)$ is the set 
\[
\TMS_{\tau}:=\{(x,t)\in\TMS\times\R:\ 0\leq t\leq\tau(x)\}/\sim,
\]
where $(x,\tau(x))\sim(\sigma x,0)$ for every $x\in\TMS$. The \textit{suspension
flow} $\phi_{t}$ with \textit{roof function} $\tau$ is the (vertical)
translation flow on $\TMS_{\tau}$ given by 
\[
\phi_{t}(x,s)=(x,s+t)\ \mbox{for}\ x\in\TMS{\rm \ and\ }0\leq s+t\leq\tau(x).
\]
Similarly, we can define suspension flows over a two-sided shift.

In the following, we list several equivalent definitions of the topological
pressure for suspension flows. These definitions are from Savchenko
\cite{Savchenko:1998fh}; Barreira and Iommi \cite{Barreira:2006fd};
Kempton \cite{Kempton:2011hs}; and Jaerisch and Kesseböhmer, and
Lamei \cite{Jaerisch:2014js}.

Given a $F:\TMS_{\tau}:\R$ continuous function, we define the function
$\Delta_{F}:\TMS\to\R$ by
\[
\Delta_{F}(x):=\int_{0}^{\tau(x)}F(x,t)\dd t.
\]

\begin{DEFN-THM}[Topological Pressure for Suspension Flows]\label{def:-topo pressure for sus flow}
Suppose $F:\TMS_{\tau}\to\R$ is a function such that $\Delta_{F}:\TMS\to\R$
is locally H\"older continuous. The following description of $P_{\phi}(F)$
the \textit{topological pressure} of $F$ over the suspension flow
$(\TMS_{\tau},\phi)$ are equivalent: 

\begin{alignat*}{1}
P_{\phi}(F) & =\lim_{T\to\infty}\frac{1}{T}\log\left(\sum_{\underset{0\leq s\leq T}{\phi_{s}(x,0)=(x,0)}}\exp\left(\int_{0}^{s}F(\phi_{t}(x,0))\dd t\right)\mathds1_{[a]}(x)\right)\\
 & =\sup\left\{ h_{\phi}(\mu)+\int_{\TMS_{\tau}}F\dd\mu:\ \mu\in\mathcal{M_{\phi}}\ \mbox{and }-\int_{\TMS_{\tau}}\tau\dd\mu<\infty\right\} ,
\end{alignat*}

where $a$ is any state in $\mathcal{A}$ and $\mathcal{M}_{\phi}$
is the set of $\phi-$invariant Borel probability measures on $\TMS_{\tau}$.
Moreover, if $\mu\in{\cal M_{\phi}}$ such that $P_{\phi}(F)=h_{\phi}(\mu)+\int_{\TMS_{\tau}}F\dd\mu$
then we call $\mu$ an \textit{equilibrium state} for $F$.

\end{DEFN-THM}

We finish this subsection by recalling an important observation of
relations between invariant measures on $\TMS$ and on $\TMS_{\tau}$.
\begin{thm}
[\cite{Ambrose:1942fj}] \label{thm:lif_measures}Let ${\cal M}_{\s}(\tau):=\{m\in{\cal M}_{\s}:\ \int_{\TMS}\tau\dd m<\infty\}$
then there exists a bijection 
\[
\begin{array}{cccc}
R: & {\cal M}_{\s}(\tau) & \to & {\cal M}_{\phi}\\
 & m & \mapsto & \frac{m\times{\rm Leb}}{m\times{\rm Leb}(\TMS_{\tau})}
\end{array}
\]
where ${\rm Leb}$ is the Lebesgue measure for the flow direction. 
\end{thm}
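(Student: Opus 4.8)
The plan is to exhibit $R$ together with an explicit inverse, the latter obtained from the Rokhlin disintegration of a flow-invariant measure over the base section $\TMS\times\{0\}$; this is the Ambrose--Kakutani correspondence. Set $c:=\inf_{\TMS}\tau>0$ (recall $\tau$ is bounded away from $0$). For $m\in{\cal M}_{\sigma}(\tau)$ the normalizing constant $\int_{\TMS}\tau\dd m$ lies in $[c,\infty)$, so $R(m):=(m\times{\rm Leb})/\!\int_{\TMS}\tau\dd m$ is a well-defined Borel probability measure on $\TMS_{\tau}$; throughout I use the unfolding identity $\int_{\TMS_{\tau}}G\dd(m\times{\rm Leb})=\int_{\TMS}\int_{0}^{\tau(x)}G(x,t)\dd t\dd m(x)$.

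The substantive point on the forward side is $\phi$-invariance of $R(m)$. For a bounded Borel $G:\TMS_{\tau}\to\R$, put $\widehat G(x,u):=G(\phi_{u}(x,0))$ for $u\ge0$; the identification $(x,\tau(x))\sim(\sigma x,0)$ gives the cocycle relation $\widehat G(x,u+\tau(x))=\widehat G(\sigma x,u)$. For $s\ge0$, unfolding together with the substitution $u=t+s$ yields
\[
\int_{\TMS_{\tau}}(G\circ\phi_{s})\dd(m\times{\rm Leb})=\int_{\TMS}\int_{s}^{s+\tau(x)}\widehat G(x,u)\dd u\dd m(x).
\]
Splitting the inner integral at $\tau(x)$, rewriting the overhang $\int_{\tau(x)}^{s+\tau(x)}\widehat G(x,u)\dd u=\int_{0}^{s}\widehat G(\sigma x,u)\dd u$ via the cocycle relation, and invoking $\sigma$-invariance of $m$ telescopes the right-hand side back to $\int_{\TMS}\int_{0}^{\tau(x)}\widehat G(x,u)\dd u\dd m(x)=\int_{\TMS_{\tau}}G\dd(m\times{\rm Leb})$. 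Hence $R(m)\in{\cal M}_{\phi}$.

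For the inverse, let $\mu\in{\cal M}_{\phi}$ and let $\pi:\TMS_{\tau}\to\TMS$ be the projection on the fundamental domain $\{(x,t):0\le t<\tau(x)\}$. Disintegrate $\mu=\int_{\TMS}\mu_{x}\dd\rho(x)$ with $\rho=\pi_{*}\mu$ and $\mu_{x}$ a probability on $\{x\}\times[0,\tau(x))$. The $\phi$-invariance of $\mu$ forces $\mu_{x}=\tau(x)^{-1}{\rm Leb}|_{[0,\tau(x))}$ for $\rho$-a.e.\ $x$ --- the ``verticality'' of flow-invariant measures; it follows by testing $\mu$ against $g(x)\mathbf{1}_{[a,a+\ep)}(t)$ with $a+\ep\le c$, whose $\phi_{b}$-translates are of the same form with no wrapping, so each $\mu_{x}$ is translation-invariant, hence a constant multiple of Lebesgue. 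Let $\lambda$ be the measure $\dd\lambda=\tau^{-1}\dd\rho$; since $\tau\ge c$ we have $\lambda(\TMS)\le\rho(\TMS)/c=1/c<\infty$, so $m_{\mu}:=\lambda/\lambda(\TMS)$ is a Borel probability measure on $\TMS$ with $\int_{\TMS}\tau\dd m_{\mu}=\lambda(\TMS)^{-1}<\infty$. For $\sigma$-invariance, test flow-invariance of $\mu$ on $G(x,t)=g(x)\mathbf{1}_{[0,\ep)}(t)$ with $\ep\le c$: then $\int G\dd\mu=\ep\int_{\TMS}(g/\tau)\dd\rho$, while $G\circ\phi_{\ep}$ equals $g(\sigma x)$ exactly on the top slab $\{\tau(x)-\ep\le t<\tau(x)\}$, so $\int(G\circ\phi_{\ep})\dd\mu=\ep\int_{\TMS}(g\circ\sigma/\tau)\dd\rho$; equating the two gives $\int g\circ\sigma\dd\lambda=\int g\dd\lambda$, i.e.\ $\lambda$ --- hence $m_{\mu}$ --- is $\sigma$-invariant. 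Thus $m_{\mu}\in{\cal M}_{\sigma}(\tau)$.

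It remains to see that $R$ and $m\mapsto m_{\mu}$ are mutually inverse, which is a direct unwinding: using $\mu_{x}=\tau(x)^{-1}{\rm Leb}$ and $\pi_{*}R(m)=\bigl(\tau/\!\int_{\TMS}\tau\dd m\bigr)m$ one checks $R(m_{\mu})=\mu$ and $m_{R(m)}=m$. The main obstacle is the inverse direction, and within it the verticality assertion $\mu_{x}=\tau(x)^{-1}{\rm Leb}$: this is the only place the flow structure --- rather than mere measurability --- enters, and it is the heart of the Ambrose--Kakutani representation. Everything else (well-definedness of $R$, $\phi$-invariance of $R(m)$, $\sigma$-invariance of $m_{\mu}$, and the mutual inversion) is Fubini's theorem together with the single identification $(x,\tau(x))\sim(\sigma x,0)$.
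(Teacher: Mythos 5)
The paper does not prove this statement: it is quoted, with a citation to Ambrose's 1942 representation theorem, as a classical fact about flows built under a function, so there is no argument of the paper's to compare yours against. Your proposal is the standard Ambrose--Kakutani proof and is essentially sound in structure: well-definedness of $R$, the unfolding/telescoping computation for $\phi$-invariance of $R(m)$, the disintegration-plus-verticality construction of the inverse, and the $\sigma$-invariance of the base measure are exactly the right steps. Two points need repair. First, the telescoping computation splits $\int_{s}^{s+\tau(x)}$ at $\tau(x)$, which presupposes $s\leq\tau(x)$; you should say explicitly that it suffices to treat $0\leq s\leq c=\inf\tau$ and then invoke the flow property $\phi_{s}=(\phi_{s/n})^{n}$. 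Second, and more substantively, your verticality argument tests $\mu$ only against functions supported in $\TMS\times[0,c)$ (the constraint $a+\ep\leq c$), which yields translation-invariance of the conditionals $\mu_{x}$ only on $[0,c)$ and says nothing about $\mu_{x}$ on $[c,\tau(x))$; the stated conclusion $\mu_{x}=\tau(x)^{-1}{\rm Leb}|_{[0,\tau(x))}$ therefore does not follow as written. The fix is immediate: allow any $a$ with $a+\ep\leq\tau(x)$, restricting the base factor $g$ to $\{\tau>a+\ep\}$, and observe that flowing \emph{downward} by $a$ carries $\{x\}\times[a,a+\ep)$ to $\{x\}\times[0,\ep)$ within the same fiber with no wrapping; this gives translation-invariance on the entire fiber and hence the claimed form of the conditionals. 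With these two adjustments the argument is complete and the mutual-inversion check goes through as you describe.
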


In other words, for any continuous function $F:\TMS_{\tau}\to\R$
, we have 
\[
\int_{\TMS_{\tau}}F\dd R(m)=\frac{\int_{\TMS}\Delta_{F}\dd m}{\int_{\TMS}\tau\dd m}.
\]

\begin{thm}
[Equilibrium States for flows; \cite{Iommi:2015th} Theorem 3.4, 3.5 ]\label{thm:equilibrium-states-flow}
Let $F:\TMS_{\tau}\to\R$ be a continuous function such that $\Delta_{F}$
is locally H\"older. Suppose $\Delta_{F}$ has an equilibrium state
$m_{\Delta_{F}}$ such that $\int\tau\dd m_{\Delta_{F}}<\infty$.
Then $F$ has an unique equilibrium state $\mu=R(m_{\Delta_{F}})$. 
\end{thm}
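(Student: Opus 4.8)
The plan is to transfer the whole question down to the base shift $(\TMS,\s)$ through the Ambrose--Kakutani correspondence $R$ of Theorem~\ref{thm:lif_measures} together with Abramov's entropy formula, thereby reducing the existence and uniqueness of an equilibrium state for $F$ on $\TMS_{\tau}$ to the already-available uniqueness theory for locally H\"older potentials on $(\TMS,\s)$ recalled in Section~\ref{subsec:Countable-Markov}.

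First I would normalize by replacing $F$ with $F-P_\phi(F)$: this subtracts $P_\phi(F)\tau$ from $\Delta_F$, subtracts $P_\phi(F)$ from $P_\phi(F)$, and changes neither $\mathcal M_\phi$, nor the map $R$, nor the set of equilibrium states, so after it $P_\phi(F)=0$ and, by Savchenko's description of the flow pressure (one of the equivalent forms in Definition/Theorem~\ref{def:-topo pressure for sus flow}), also $P_\s(\Delta_F)=0$, where $\Delta_F$ now denotes the normalized potential. For any $m\in\mathcal M_\s$ with $\int_{\TMS}\tau\dd m<\infty$ --- automatically positive, since $\tau$ is bounded away from $0$ --- Abramov's formula gives $h_\phi(R(m))=h_\s(m)/\int_{\TMS}\tau\dd m$, and Theorem~\ref{thm:lif_measures} gives $\int_{\TMS_{\tau}}F\dd R(m)=\int_{\TMS}\Delta_F\dd m/\int_{\TMS}\tau\dd m$, whence
\[
h_\phi(R(m))+\int_{\TMS_{\tau}}F\dd R(m)=\frac{h_\s(m)+\int_{\TMS}\Delta_F\dd m}{\int_{\TMS}\tau\dd m}.
\]
Since $R$ is a bijection onto $\mathcal M_\phi$ and the side condition $-\int\tau\dd\mu<\infty$ in Definition/Theorem~\ref{def:-topo pressure for sus flow} is vacuous for $\tau>0$, the variational principle for the suspension flow and this identity give $0=P_\phi(F)=\sup\{\,(h_\s(m)+\int\Delta_F\dd m)/\int\tau\dd m : m\in\mathcal M_\s,\ \int\tau\dd m<\infty\,\}$. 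By the variational principle on $(\TMS,\s)$ the numerator is $\le P_\s(\Delta_F)=0$ for every admissible $m$, and since $\int\tau\dd m>0$ the supremum is attained exactly at the equilibrium states of $\Delta_F$; by hypothesis $m_{\Delta_F}$ is one such, with finite roof integral, so $\mu:=R(m_{\Delta_F})$ attains it, i.e.\ $h_\phi(\mu)+\int F\dd\mu=P_\phi(F)$, and $\mu$ is an equilibrium state for $F$.

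For uniqueness, let $\mu'$ be any equilibrium state for $F$. As $\mu'\in\mathcal M_\phi$, Theorem~\ref{thm:lif_measures} writes $\mu'=R(m')$ for a unique $m'\in\mathcal M_\s$ with $\int\tau\dd m'<\infty$; substituting into the transfer identity and using $h_\phi(\mu')+\int F\dd\mu'=P_\phi(F)=0$ forces $h_\s(m')+\int\Delta_F\dd m'=0=P_\s(\Delta_F)$, so $m'$ is an equilibrium state for $\Delta_F$. But $\Delta_F$ is locally H\"older with finite Gurevich pressure on the topologically mixing BIP shift $(\TMS,\s)$, hence has at most one equilibrium state (the uniqueness results recalled in Section~\ref{subsec:Countable-Markov}); therefore $m'=m_{\Delta_F}$ and $\mu'=R(m_{\Delta_F})=\mu$.

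The genuinely delicate point is the non-compactness of $\TMS$: over a countable Markov shift a locally H\"older potential of finite pressure need not admit any equilibrium state at all, and Abramov's formula, the correspondence $R$, and the two variational principles only interact cleanly on measures $m$ with $\int\tau\dd m<\infty$. The hypothesis $\int\tau\dd m_{\Delta_F}<\infty$ is precisely what places a maximizing measure inside this admissible class --- equivalently, it rules out escape of entropy and mass into the cusp --- and I would be careful to check that every measure entering the two variational principles, as well as the candidate $\mu'$, lies in this class; here that is automatic, because $R$ maps $\mathcal M_\s(\tau)$ \emph{onto} all of $\mathcal M_\phi$, so no genuinely new obstruction arises beyond invoking that surjectivity and the base-shift uniqueness theory.
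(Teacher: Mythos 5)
The paper offers no proof of this statement---it is quoted (in fact slightly imprecisely) from \cite{Iommi:2015th}---so your argument is measured against the cited source rather than against anything in the text; your route (Ambrose--Kakutani plus Abramov's formula, transferring both variational principles to the base) is the standard one and is in essence the proof given there. The genuine gap is the line ``by Savchenko's description of the flow pressure \dots also $P(\Delta_{F})=0$'' for the normalized potential. Nothing in Definition/Theorem \ref{def:-topo pressure for sus flow} gives this, and it does not follow from $P_{\phi}(F)=0$ alone. What does follow is the inequality $P(\Delta_{F})\leq0$: approximating the Gurevich pressure by compact subsystems (on which $\tau$ is integrable for every invariant measure) and applying the flow variational principle to each such measure gives $h_{\s}(m)+\int\Delta_{F}\dd m\leq0$. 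The reverse inequality is precisely what can fail: if $P(\Delta_{F}-P_{\phi}(F)\tau)<0$, then every $\mu=R(m)\in{\cal M}_{\phi}$ satisfies $h_{\phi}(\mu)+\int F\dd\mu\leq P(\Delta_{F}-P_{\phi}(F)\tau)/\int\tau\dd m<0$, the supremum defining $P_{\phi}(F)$ is approached only along measures with $\int\tau\dd m\to\infty$, and $F$ has no equilibrium state at all---even though the integrated potential may perfectly well have one with finite roof integral. This is why Theorem 3.4 of \cite{Iommi:2015th} carries the condition $P(\Delta_{F}-P_{\phi}(F)\tau)=0$ as an explicit part of its equivalence; it must either be added to the hypotheses or verified from extra structure (in this paper it is always checked directly, e.g. $P(-a\tau-b\kappa)=0$ on the Manhattan curve). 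Your closing paragraph correctly identifies escape of mass into the cusp as the delicate phenomenon, but the hypothesis $\int\tau\dd m_{\Delta_{F}}<\infty$ only places the candidate maximizer in the admissible class; it does not prevent the supremum from exceeding the value attained there.

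Two smaller points. First, your normalization replaces $\Delta_{F}$ by $\Delta_{F}-P_{\phi}(F)\tau$; since $P_{\phi}(F)\tau$ is in general unbounded, the equilibrium states of these two potentials on the shift are genuinely different objects, so the hypothesis ``$\Delta_{F}$ has an equilibrium state'' does not transfer as silently as your first sentence suggests. (This is really a defect of the statement as transcribed: it is only correct when read with $P_{\phi}(F)=0$, which is how the paper uses it.) Second, the uniqueness step invokes ``at most one equilibrium state'' for $\Delta_{F}$; the version of this fact recorded in Section \ref{subsec:Countable-Markov} requires $\sup\Delta_{F}<\infty$, which is not among the stated hypotheses, though it does hold for the potentials $-a\tau-b\kappa$ actually used in the paper.
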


\section{Geodesic Flows for Finite Area Hyperbolic Surfaces\label{sec:Geodesic-Flows-Symbolic}}

\subsection{A Symbolic Model for Geodesics Flows }

In this section, we survey a symbolic model for the geodesic flow.
More precisely, we will construct a geodesic flow invariant subset
$\Omega_{0}$ of the unit tangent bundle, and study it through a symbolic
model. This construction is given by Ledrappier and Sarig in \cite{Ledrappier:2008wq}.
We will mostly follow their notations and use the Poincaré disk model
$\D$ in this section. 

Let $S=S_{g,n}$ be a surface with $g$ genus and $n$ punctures,
$X=X_{\rho}$ be the finite area hyperbolic surface given by the Fuchsian
representation $\rho:\pi_{1}(S)\to\psl(2,\R)$, and $g_{t}:T^{1}X\to T^{1}X$
be the geodesic flow for $X$. In this paper, we only interested in
non-compact surfaces, because the compact cases have been studied
before. In other words, in our discussion $n$ is no less than 1. 
\begin{thm}
[\cite{Tukia:1972ti,Tukia:1973uw}] Suppose $X$ is a non-compact
finite area hyperbolic surface with negative Euler characteristic.
Then there exists a closed ideal hyperbolic polygon $D_{0}\subset\D$
such that \begin{enumerate}[font=\normalfont]

\item The origin is in $D_{0}$. 

\item $D_{0}$ has 2$k$ vertices, and all vertices are on $\vbdy\D$,
where $k=2g+n-1=-\chi(X)+1\geq2$. 

\item These vertices partition $\vbdy\D$ to $2k$ intervals $I_{i}$,
$i\in{\cal {\cal S}}$ where ${\cal S}:=\{1,1',2,2',...,k,k'\}$.
Moreover, each $I_{i}$ can be paired with the other interval $I_{i'}$
such that there exists a pair of M\"obius transformations $g_{i},g_{i'}=g_{i}^{-1}$
with $g_{i}$ maps $I_{i}$ onto $\vbdy\D\backslash I_{i'}$ and $g_{i'}$
maps $I_{s'}$ onto $\vbdy\D\backslash I_{i}$.

\item $X$ is isomorphic to the space obtained by identifying all
pairs of $(I_{i},I_{i'}$) through $g_{i}$ for all $i\in{\cal S}$. 

\item Take $i$ (or $i'$) from each side pair $(I_{i},I_{i'})$
and consider the corresponding M\"obius transformation $g_{i}$,
then 
\[
\G=\rho(\pi_{1}(X))=\langle g_{1},...,g_{k}\rangle
\]
 where $\rho$ is the Fuchsian representation such that $X=\G\backslash\D$. 

\end{enumerate}
\end{thm}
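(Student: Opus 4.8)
\emph{Proof strategy.} The plan is to realise $D_0$ as the image, under a developing map, of a fundamental domain obtained by cutting $X$ open along a geodesic ideal triangulation. First I would record that a non-compact finite-area surface is homotopy equivalent to a finite graph, so $\pi_1(S_{g,n})$ is free of rank $1-\chi(X)=2g+n-1$; this is the number $k$ in the statement, and it already explains why the answer must involve $k$ generators and $2k$ sides. Next I would invoke the (classical) existence of a geodesic ideal triangulation $\mathcal{T}$ of $X$: a locally finite triangulation all of whose vertices are the $n$ cusps and whose edges are complete simple geodesics running out to cusps (obtained, e.g., from the Epstein--Penner decomposition by subdividing, or by geodesically straightening any topological ideal triangulation). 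An Euler characteristic count gives that $\mathcal{T}$ has $F=2(2g-2+n)$ triangles and $E=3(2g-2+n)$ edges.

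Then I would pass to the dual graph $\mathcal{T}^{*}$, which is connected, and choose a spanning tree $T^{*}\subset\mathcal{T}^{*}$ with its $F-1$ edges. Gluing the $F$ ideal triangles of $\mathcal{T}$ to one another along exactly the $\mathcal{T}$-edges that are dual to $T^{*}$ yields a space $P$: since one is gluing disks along a tree pattern of arcs, $P$ is a closed topological disk, and its boundary circle is a cyclic concatenation of $2k$ ideal geodesic arcs, namely the two copies of each of the $E-(F-1)=2g+n-1=k$ edges of $\mathcal{T}$ that are \emph{not} dual to a tree edge. Because $T^{*}$ is a tree the gluing carries no holonomy, so the piecewise-isometric map of $P$ into $\D$ lifts to a well-defined isometric immersion $\mathrm{dev}\colon P\to\D$; the key claim is that $\mathrm{dev}$ is injective with image a closed ideal polygon $D_0\subset\D$ whose $2k$ vertices lie on $\vbdy\D$. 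After composing with a suitable element of $\psltr$ one may put the origin in the interior of $D_0$, establishing (1), (2), and the part of (3) concerning the number and location of the vertices.

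For the side pairings I would argue as follows. Each unglued edge $e$ of $\mathcal{T}$ appears as two sides of $\partial P$; the unique covering transformation of the universal cover $\D\to X$ carrying one developed copy of $e$ to the other lies in $\G\subset\psltr$, hence is a Möbius transformation. Index the $2k$ sides, and the boundary arcs $I_i\subset\vbdy\D$ that they cut off from $D_0$, by $\mathcal{S}=\{1,1',\dots,k,k'\}$ so that side $s_i$ is glued to side $s_{i'}$; call the corresponding transformation $g_i$ (chosen so $g_i$ carries $s_i$ to $s_{i'}$), and set $g_{i'}:=g_i^{-1}$. Collapsing the tree $T^{*}$ and applying van Kampen's theorem shows $\pi_1(S_{g,n})$ is generated by the loops that cross the $k$ unglued edges once, which gives $\G=\langle g_1,\dots,g_k\rangle$, i.e.\ (5); and by construction $X$ is the quotient of $D_0$ by the identifications $s_i\mapsto s_{i'}$ via $g_i$, which is (4). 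Finally, since $D_0$ is a fundamental domain, $g_i(D_0)$ is the tile adjacent to $D_0$ across the complete geodesic carrying $s_{i'}$; as the $\G$-tessellation tiles $\D$ and is locally finite, $g_i$ must send the half-disk behind $s_i$ onto the half-disk bounded by the geodesic through $s_{i'}$ that \emph{contains} $D_0$, which is precisely $g_i(I_i)=\vbdy\D\setminus I_{i'}$, and symmetrically for $g_{i'}$; this is the remaining, \emph{Schottky-type} part of (3).

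\emph{Main obstacle.} The hard part is the injectivity of $\mathrm{dev}$ in the second paragraph — equivalently, that successively attached ideal triangles never overlap, so that $D_0$ is genuinely embedded and is a fundamental domain for $\G$. This rests on the observation that when a new ideal triangle is glued across a boundary edge it lies in the closed half-plane bounded by the geodesic carrying that edge and meets the already-developed region only in the shared edge — a local-to-global convexity statement for hyperbolic ideal polygons. An alternative route that bypasses this is first to establish that $\G$ is discrete and of finite covolume and then invoke Poincaré's fundamental polyhedron theorem, using $\mathcal{T}$ only to guarantee that a fundamental domain with all vertices ideal and exactly $2k$ sides exists. A minor additional point, easily dispatched, is to choose $\mathcal{T}$ (or $T^{*}$) so that no two identified sides of $D_0$ are adjacent along $\partial D_0$, so that the cyclic sequence of $2k$ vertices is honest.
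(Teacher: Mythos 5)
The paper does not prove this statement: it is imported directly from Tukia's papers, so there is no internal argument to measure yours against. Judged on its own, your route --- geodesic ideal triangulation with vertices at the cusps, gluing the triangles along a spanning tree of the dual graph, and developing the resulting disk into $\D$ --- is a correct and standard way to produce such a polygon, and your bookkeeping is right: $F=-2\chi$ triangles, $E=-3\chi$ edges, $E-(F-1)=2g+n-1=k$ unglued edges giving $2k$ sides, and $3F-2(F-1)=2k$ ideal vertices.

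The step you flag as the main obstacle, injectivity of $\mathrm{dev}$, is indeed the only substantive point, but it closes by an induction you gesture at without carrying out: order the triangles $T_1,\dots,T_F$ so that each $T_{j+1}$ meets $T_1\cup\dots\cup T_j$ along exactly one edge (possible because the gluing pattern is a tree). Inductively assume the developed union $P_j$ is the convex hull in $\D\cup\vbdy\D$ of its ideal vertices, hence a convex ideal polygon. The developed $T_{j+1}$ lies in the closed half-plane bounded by the geodesic carrying the attaching edge and disjoint from the interior of $P_j$, so it meets $P_j$ only in that edge, its third vertex lands in the complementary boundary arc, and $P_{j+1}$ is again the convex hull of one more ideal point. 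This yields injectivity, convexity of $D_0$, and (since $D_0$ consists of exactly one lift of each triangle of the $\G$-invariant lifted triangulation) the fundamental-domain property in one stroke; convexity is also precisely what upgrades ``$g_i(D_0)$ is the tile adjacent to $D_0$ across $s_{i'}$'' to $g_i(I_i)=\vbdy\D\setminus I_{i'}$, since it forces $D_0$ to lie in the half-plane with ideal boundary $\vbdy\D\setminus I_i$. Two smaller remarks: the existence of a geodesic ideal triangulation with all vertices at the cusps is itself a nontrivial classical input (Epstein--Penner, or straightening a topological ideal triangulation) and should be cited rather than assumed; and the adjacency issue you raise at the end is harmless for the statement as written --- even if $s_i$ and $s_{i'}$ share an ideal vertex the polygon still has $2k$ distinct vertices --- though avoiding it does make the subsequent cutting-sequence coding cleaner.
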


\textcolor{red}{}
\begin{figure}
\textcolor{red}{\includegraphics[scale=0.3]{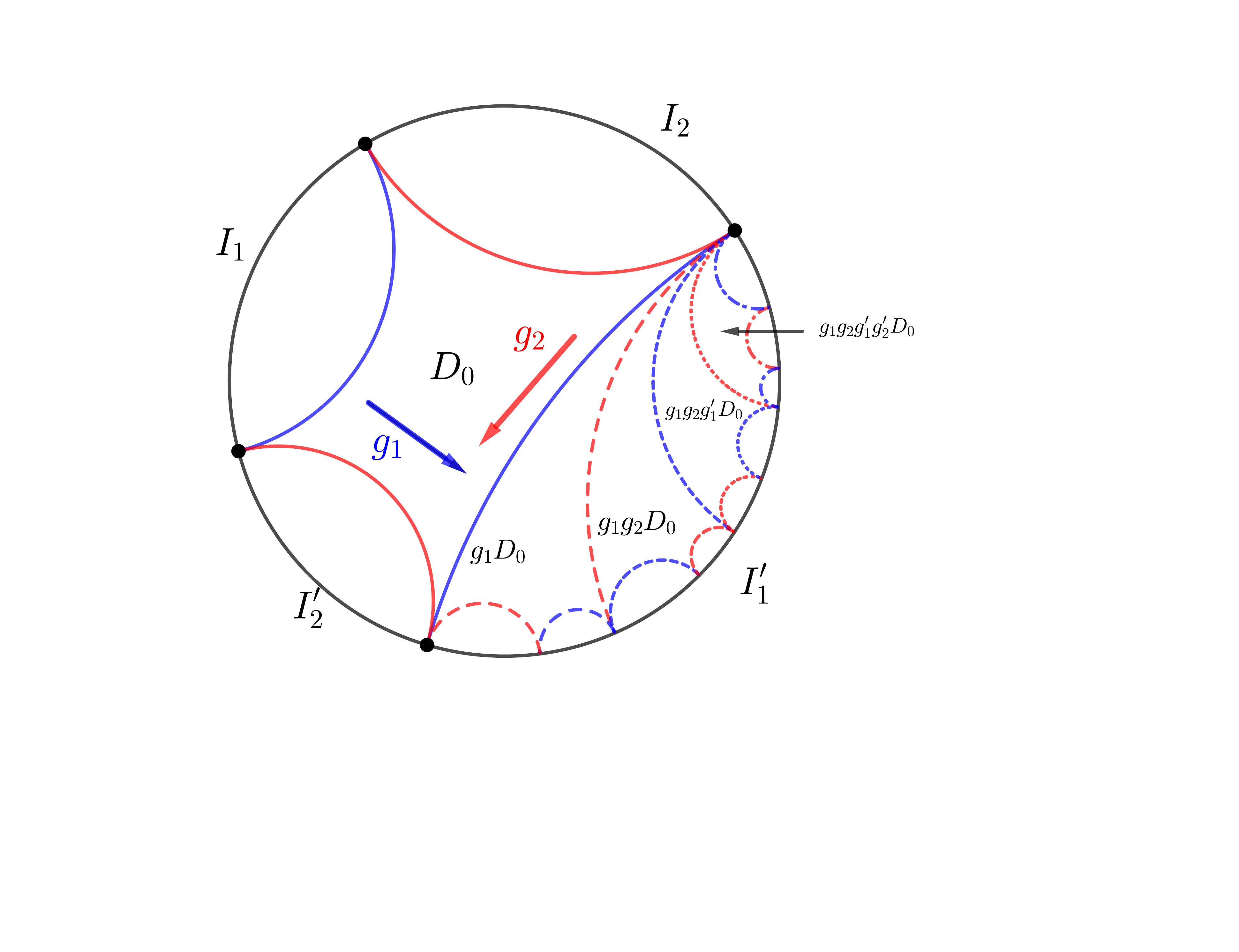}}

\textcolor{red}{\caption{Finite Area Surfaces with Cusps}
\label{Fig:schottky}}
\end{figure}

From now on, for the finite area hyperbolic surface $X$, we use the
generator given in the above theorem, and denote $\G=\langle g_{1},...,g_{k}\rangle$.
Roughly speaking, there are two steps to construct the Ledrappier-Sarig
coding. One first uses the generators $\{g_{1},...,g_{k}\}$ to derive
a Markov shift $(\Sigma_{1},\sigma_{1})$ (i.e., cutting sequences),
then modify $(\Sigma_{1},\sigma_{1})$ to get another Markov shift
$(\Sigma_{A},\sigma_{A})$ on which the first returning map has better
regularity. We will discuss their construction in detail below.

The shape of the fundamental fundamental $D_{0}$ plays a crucial
role in the Ledrappier and Sarig's coding. We start from looking at
vertices of $D_{0}$. Notice that for every vertex $v$ of $D_{0}$,
there exists a (shortest) cycle, say $l$ elements, of edge-pairing
isometries $g_{s_{i}}$ for $1\leq i\leq l$ such that $v$ is the
unique fixed point of $g_{s_{l}}g_{s_{l-1}}...g_{s_{2}}g_{s_{1}}$
provided $g_{s_{i}}...g_{s_{2}}g_{s_{1}}(D_{0})$ and $(g_{s_{1}}^{-1}g_{s_{2}}^{-1}...g_{s_{i}}^{-1})(D_{0})$
touch $\vbdy\D$ at $v$ for all $1\leq i\leq l$. We call $\underline{w}=(s_{1},...,s_{l})$
and $\underline{w'}=(s'_{l},....,s'_{1})$ the \textit{cycles} of
$v$. We denote the set of all vertex cycles by $\mathfrak{C}$, and
$N(\mathcal{\mathfrak{C}})$ is the least common multiplier of length
of cycles of all vertices (see Figure \ref{Fig:schottky}). 

\subsubsection{The Classical Coding}

\textcolor{red}{}
\begin{figure}
\textcolor{red}{\includegraphics[scale=0.35]{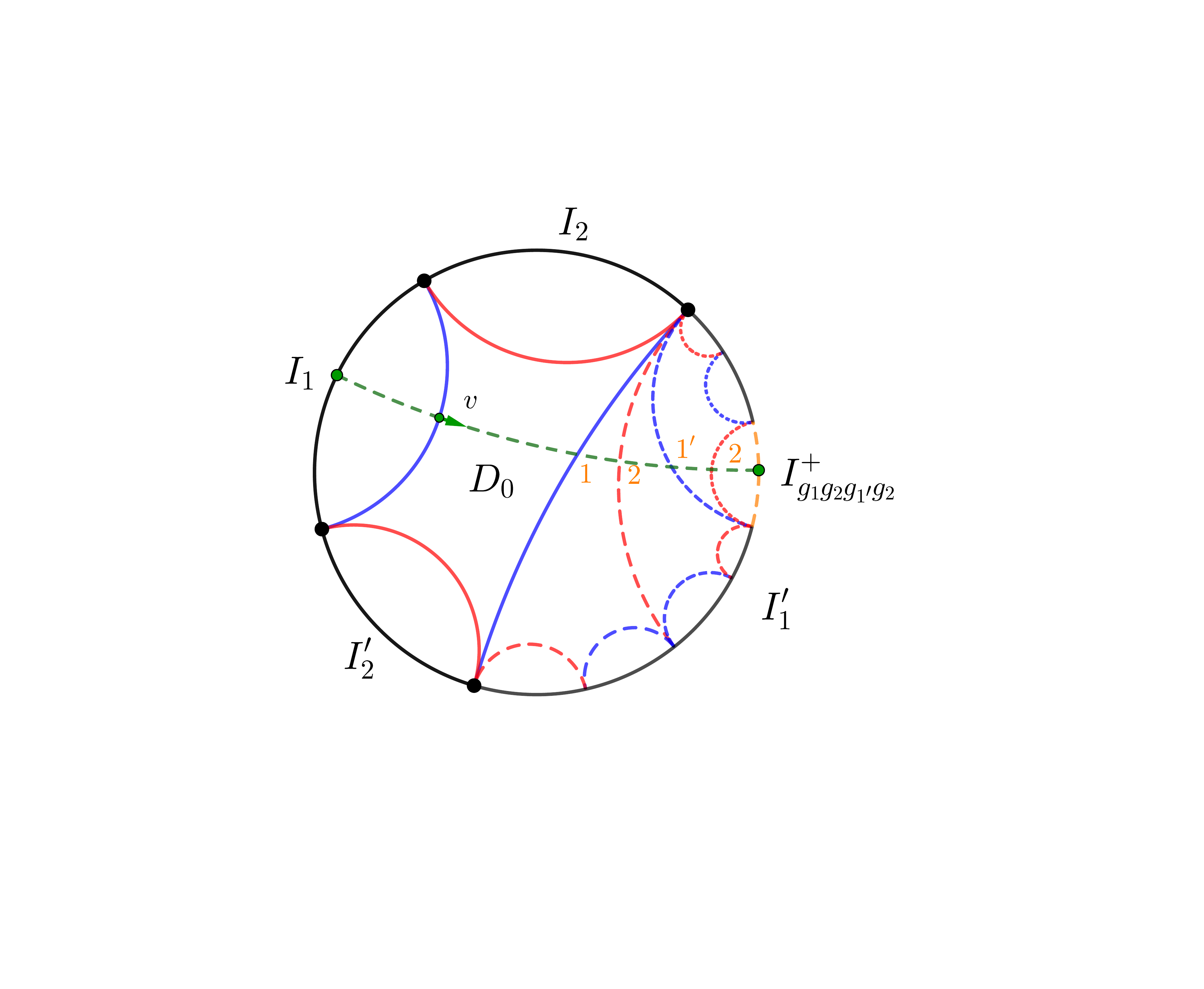}\caption{Classical Coding}
\label{Fig:Classical Coding}}
\end{figure}

Recall that a vector $v\in T^{1}X$ \textit{escapes to infinity} if
$g_{t}(v)$ leaves, eventually, all compact set $K\subset T^{1}M$
as $t\to\infty$ or $-\infty.$ Let $\Omega_{0}\subset T^{1}X$ be
the set of non-escaping vectors. It is clearly that $\Omega_{0}$
is a flow invariant set and contains most of the interesting dynamics. 

A unit vector $v\in T^{1}\D$ based at $\D\cap\partial D_{0}$ is
called \textit{inward pointing} if $g_{t}(v)\in{\rm int}(D_{0})$
for sufficiently small $t$. We denote by $(\partial D_{0})_{{\rm in}}$
the set of all inward pointing vectors. It is not hard to see $(\partial D_{0})_{{\rm in}}$
projects to a Poincar\'e section of $g_{t}:\Omega_{0}\to\Omega_{0}$,
by abusing the notation, we also denote this section by $(\partial D_{0})_{{\rm in}}$. 

In the following, we recall two equivalent methods of coding of geodesic
flows on $\Omega_{0}$: \textit{cutting sequences} and \textit{boundary
expansion}. To derive the coding, we first label edges of $D_{0}$
in the following manner. For each edge $e$ of $D_{0}$, it determines
a boundary interval $I_{s(e)}$ for some $s(e)\in{\cal S}$ such that
$I_{s(e)}$ has the same vertices as $e$ and is on the side of $e$
which does not contain $D_{0}$. We call $s=s(e)\in{\cal S}$ the
\textit{external label} of $e$, and $s'=s'(e)$ the \textit{internal
label} of $e$. See Figure \ref{Fig:Classical Coding} for an illustration. 

Now we are ready to state two canonical coding or Markov partition
associated to $(\partial D_{0})_{{\rm in}}$. For every $v\in(\partial D_{0})_{{\rm in}}$
it is determined by 

\begin{enumerate}[font=\normalfont]

\item \textbf{Cutting sequence} $(x_{k})\in{\cal S}^{\Z}$ : $x_{k}$
are the internal labels of the edges of $D_{0}$ cut by $g_{t}(v)$
where $k=1$ is the first cut in postive time and $k=0$ is the first
cut in non-negative time. 

\item \textbf{Boundary expansion} $(y_{k})\in{\cal S}^{\Z}$: the
lift $\widetilde{(g_{t}v)}\subset T^{1}\D$ is a geodesic on $T^{1}\D$
has a \textit{attracting limit point} (or the ending point) in $\bigcap_{k\geq1}I_{y_{1},...,y_{k}}^{+}$,
and a \textit{repelling limit point} (or the beginning point) in $\bigcap_{k\leq0}I_{y_{0},...,y_{k}}^{-}$
where $I_{s_{1},s_{2},...,s_{k}}^{+}=g_{s_{1}}g_{s_{2}}...g_{s_{k-1}}(I_{s'_{k}})$
and $I_{s_{1},s_{2},...,s_{k}}^{-}=g_{s_{1}}^{-1}g_{s_{2}}^{-1}...g_{s_{k-1}}^{-1}(I_{s_{k}})$.

\end{enumerate}

It is not hard to see $(x_{k})_{k\in\Z}=(y_{k})_{k\in\Z}$ because
all vertices of $D_{0}$ are on $\vbdy\D$. Thus we can and will interchange
in between these two perspectives. In sum, the classical coding means
that for every $v\in(\partial D_{0})_{{\rm in}}$, the geodesic $g_{t}(v)$
correspond to an element in 
\[
\Sigma_{1}:=\{(x_{k})\in\mathcal{S}^{\Z}:x_{k+1}\neq(x_{k})'\}
\]
and $\sigma_{1}$ is the left shift on $\Sigma_{1}$. 

\subsubsection{The Modified Coding}

As pointed out in \cite{Ledrappier:2008wq}, $(\Sigma_{1},\sigma_{1})$
is not ``good'' enough for our purpose. For example, the classcal
coding is not necessarly one to one, and the first returning map is
not regular enough to push the machinery. Thus we need to modify $(\Sigma_{1},\sigma_{1}$)
by looking at a smaller section of the flow $g_{t}:\Omega_{0}\to\Omega_{0}$. 

Fix a number $n^{*}$ large, set $N^{*}=4n^{*}N(\mathfrak{C})$, and
the set of length $N^{*}$ repeating vertiex cycles defined as 
\[
\mathfrak{C}^{*}:=\{\underset{N^{*}/|\underline{w}|\ {\rm copies}}{\underbrace{(\underline{w},\underline{w},...,\underline{w})}}:\ \underline{w}\in\mathfrak{C}\}.
\]
We write $N^{\#}:=\frac{1}{2}N^{*}-1$. Now consider the following
set 
\[
A:=\{y\in\Sigma_{1}:\ \underset{N^{*}}{\underbrace{(y_{-N^{\#},...,}y_{\frac{N^{*}}{2}})}}\notin\mathfrak{C}^{*}\}\subset\Sigma_{1}.
\]
The smaller section $S_{A}\subset(\partial D_{0})_{{\rm in}}$ is
given by 
\[
S_{A}:=\{v\in(\partial D_{0})_{{\rm in}}:\ {\rm the\ cutting\ sequence\ of\ }g_{t}(v)\ {\rm is\ in\ }A\}
\]
 (see Figure \ref{Fig:Smaller Section}).

\textcolor{red}{}
\begin{figure}
\textcolor{red}{\includegraphics[scale=0.25]{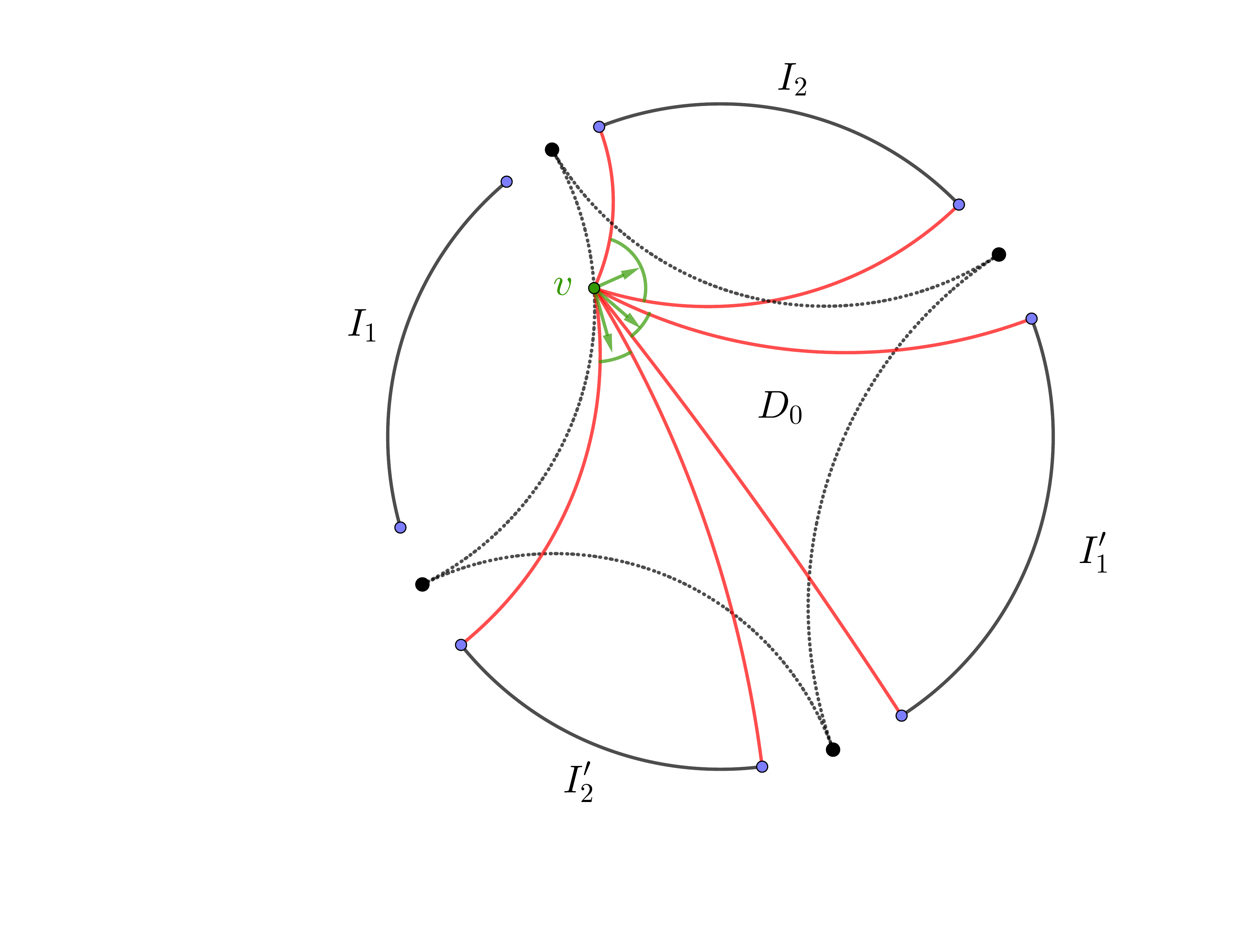}\caption{The Smaller Section}
\label{Fig:Smaller Section}}
\end{figure}
It is not hard to see that $S_{A}$ is a Poincar\'e section of $g_{t}:\Omega_{0}\to\Omega_{0}$.
Moreover, by the combinatorial property of $\mathfrak{C}$ pointed
in \cite[Section 2.1]{Ledrappier:2008wq}, we know for a geodesic
$g_{t}(v)$ with the cutting sequence $(x_{n})_{n\in\Z}$ which stops
returning to $A$ at some point, $(x_{n})$ will eventually repeating
an element $\underline{w}\in\mathfrak{C}^{*}$, i.e., $(x_{n})_{n\in\Z}=(....,x_{n},...,\underline{w},\underline{w},\underline{w},...)$. 

In other words, if $v$ does not escape to infinity, then the cutting
sequence of $g_{t}(v)$ always returns to $A$. More precisely, $\forall x\in A,$
there exists $N=N(x)\in\R$ such that $\sigma_{1}^{N}(x)\in A$. We
define the induced shift map on $A$ by $\sigma_{A}(x):=\sigma_{1}^{N_{A}(x)}$
where $N_{A}(x)=\min\{N\in\N:\ \sigma_{1}^{N}(x)\in A\}.$ 

Now, we are ready to describe a Markov partition of this modified
Markov shift $\sigma_{A}:A\to A$: 

\begin{enumerate}[font=\normalfont]

\item \textbf{Type I}, denoted by $\Sigma_{A}({\rm I})$: cylinders
of length $N^{*}+1$, namely $[x_{-N^{\#}},...,\dot{x}_{0},...,x_{\frac{N}{2}^{*}-1},x_{\frac{N}{2}^{*}}]$,
such that \\ $\underset{{\rm length}=N^{*}}{\underbrace{[x_{-N^{\#}},...,\dot{x}_{0},...,x_{\frac{N}{2}^{*}-1}]}}\subset A$
and $[x_{-N^{\#}+1},...,\dot{x}_{1},...,x_{\frac{N}{2}^{*}}]=\sigma_{1}([x_{-N^{\#}},...,\dot{x}_{0},...,x_{\frac{N}{2}^{*}-1}])\subset A$.
The \textit{shape} of $[\underline{e}]\in\Sigma_{A}({\rm I})$ is
defined as $\mathfrak{s}(\underline{e})=(\underline{e})$.

\item \textbf{Type II}, denoted by $\Sigma_{A}({\rm II})$: cylinders
of length bigger than $N^{*}+1$, namely
\[
B_{l,k}(a,\underline{w},c):=[x_{-N^{^{\#}}},w_{1},...,\dot{w}_{N^{\#}},...,w_{N^{*}},(\underline{w})^{l-1},w_{1},w_{2},...,w_{k},\underline{b}]
\]
 where $a:=x_{-N^{\#}},c\in{\cal S},$ $\underline{w}\in\mathfrak{C}^{*}$,
$l\geq0$, $0\leq k<N^{*}$ are not both zero, and 
\[
\underline{b}:=\begin{cases}
(w_{k+1},...,w_{N^{*}},w_{1},...,w_{k-1},c) & l=0,k\neq0\\
(w_{1},...,w_{N^{*}-1},c) & l\neq0,k=0\\
(w_{k+1},...,w_{N^{*}},w_{1},...,w_{k-1},c) & l,k\neq0
\end{cases}
\]
provided $B_{l,k}(a,\underline{w},c)\subset A$, $[\underline{b}]\subset A$.
The \textit{shape} of $[\underline{e}]\in\Sigma_{A}({\rm II})$ and
of the form $B_{l,k}(a,\underline{w},c)$ is deifned as 
\[
\mathfrak{s}(\underline{e}):=(k,a,\underline{w},c)\in\{0,...,N^{*}-1\}\times{\cal S}\times\mathfrak{C}^{*}\times{\cal S}.
\]

\end{enumerate}

\begin{prop}
[\cite{Ledrappier:2008wq}, Lemma 2.1] $\sigma_{A}:A\to A$ is topologically
mixing, and the Markov partition given by $\Sigma_{A}({\rm I})$ and
$\Sigma_{A}({\rm II})$ has the BIP property. 
\end{prop}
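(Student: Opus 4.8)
The plan is to derive both assertions from the corresponding facts about the finite-alphabet subshift $(\Sigma_{1},\sigma_{1})$ together with the combinatorial analysis of the vertex cycles $\mathfrak{C}$ carried out in \cite[Section 2.1]{Ledrappier:2008wq}. First observe that $(\Sigma_{1},\sigma_{1})$ is a topologically mixing subshift of finite type: its alphabet $\mathcal{S}$ has $\#\mathcal{S}=2k\geq 4$ letters and the only forbidden transitions are $x_{n+1}=(x_{n})'$, so for any $a,b\in\mathcal{S}$ and any $m\geq 3$ there is an admissible $\mathcal{S}$-word of length $m$ from $a$ to $b$ (interpolate near the end with a letter $c\notin\{a',b'\}$, which exists since $\#\mathcal{S}\geq 4$). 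Recall also that $\sigma_{A}$ is the first-return map of $\sigma_{1}$ to the cylinder-union $A$, and that its alphabet $\mathcal{A}$ is the disjoint union of the \emph{finite} family $\Sigma_{A}({\rm I})$ of length-$(N^{*}+1)$ cylinders over $\mathcal{S}$ and the family $\Sigma_{A}({\rm II})$ of blocks $B_{l,k}(a,\underline{w},c)$, in which the only unbounded parameter is the number $l$ of repetitions of the vertex cycle $\underline{w}\in\mathfrak{C}$.

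For topological mixing I would argue directly. Given states $\alpha,\beta\in\mathcal{A}$, I want for all large $n$ an admissible $\sigma_{A}$-word of length $n$ from $\alpha$ to $\beta$, that is, a $\sigma_{1}$-orbit segment joining the cylinder $[\alpha]$ to $[\beta]$ which returns to $A$ exactly $n-1$ times in between. Such a segment is produced by combining (i) the mixing of $\sigma_{1}$, which allows one to interpolate admissible $\mathcal{S}$-blocks of arbitrary length between prescribed endpoints, with (ii) the combinatorial lemmas of \cite[Section 2.1]{Ledrappier:2008wq} — available because $N^{*}=4n^{*}N(\mathfrak{C})$ is a large multiple of $N(\mathfrak{C})$ — which guarantee that inside a sufficiently long admissible block one can both force and avoid the window patterns $\mathfrak{C}^{*}$ at will. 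This yields enough control over the successive return times to $A$ (the short ``Type I'' returns and the long ``Type II'' cusp-excursion returns) to realize every total length $n\geq N_{\alpha\beta}$.

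For the BIP property I would take the finite distinguished collection to be $\Sigma_{A}({\rm I})$ (enlarged by finitely many auxiliary Type I cylinders if convenient) and check that every state $s\in\mathcal{A}$ admits an admissible transition from, and an admissible transition to, some Type I state. For a Type I state this follows from its defining condition, namely that $\sigma_{1}[s]$ again lies in $A$, together with the freedom afforded by $\#\mathcal{S}\geq 4$ to extend by one $\mathcal{S}$-letter while keeping the relevant length-$N^{*}$ window out of $\mathfrak{C}^{*}$. For a Type II state $B_{l,k}(a,\underline{w},c)$, its terminal block $\underline{b}$ ends in the free letter $c\in\mathcal{S}$ and already satisfies $[\underline{b}]\subset A$; appending a short $\mathcal{S}$-string after $c$ so that the next length-$N^{*}$ window avoids $\mathfrak{C}^{*}$ yields a Type I successor, and symmetrically, prepending a short $\mathcal{S}$-string before the initial letter $a=x_{-N^{\#}}$, which alters only a bounded prefix and hence stays in $A$, yields a Type I predecessor. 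Since $\Sigma_{A}({\rm I})$ is finite, BIP follows.

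The one genuinely delicate point — the reason this is precisely \cite[Lemma 2.1]{Ledrappier:2008wq} rather than a formality — is the combinatorics of vertex cycles: that a geodesic escaping along a cusp must eventually repeat a \emph{single} cycle $\underline{w}\in\mathfrak{C}$ (so that a Type II state carries only the one unbounded index $l$), that two distinct cycles cannot interleave inside a window of length $N^{*}$, and that such windows can be steered into or away from $\mathfrak{C}^{*}$ exactly as used above; all of these rely on $n^{*}$ being chosen large. Granting these lemmas, the mixing and BIP assertions assemble as sketched. I would therefore present the proof by quoting the Section 2.1 combinatorial lemmas of \cite{Ledrappier:2008wq} and then deducing mixing and BIP from them.
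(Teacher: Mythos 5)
The paper does not prove this proposition at all: it is imported verbatim as \cite[Lemma 2.1]{Ledrappier:2008wq}, so there is no in-paper argument to compare yours against. As a reconstruction of the cited proof, your sketch identifies the right ingredients and correctly isolates where the real work lies (the vertex-cycle combinatorics of \cite[Section 2.1]{Ledrappier:2008wq}); the reduction of mixing to ``pad with Type I states'' is sound, since Type I returns have $\sigma_{A}$-return time $1$, so interpolating admissible $\sigma_{1}$-blocks that stay in $A$ at every step realizes every sufficiently large $\sigma_{A}$-word length and not merely transitivity. Two points in your sketch are thinner than they should be if this were written out in full. First, for BIP the states of $\Sigma_{A}$ are cylinders occupying the coordinate window $[-N^{\#},\,\cdot\,]$, so an admissible transition $\alpha'\to s$ is not obtained by ``prepending a short string'' to $s$: one must exhibit a state $\alpha'$ whose block, shifted by its own return time, \emph{overlaps consistently} with the block of $s$ (the blocks share $N^{*}$ letters), and for a Type II state $s=B_{l,k}(a,\underline{w},c)$ the predecessor's block necessarily contains a long run of $\underline{w}$; the claim that a \emph{Type I} predecessor always exists therefore needs the combinatorial fact that the window immediately preceding the cusp excursion can be kept out of $\mathfrak{C}^{*}$, which is exactly the content of the cited lemmas and should be invoked explicitly rather than by analogy with free concatenation in $\Sigma_{1}$. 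Second, your interpolation step quietly assumes that one can keep \emph{every} intermediate length-$N^{*}$ window out of $\mathfrak{C}^{*}$ while connecting prescribed endpoints; this is true but is again precisely the nontrivial combinatorial input. Granting \cite[Section 2.1]{Ledrappier:2008wq}, as both you and the paper ultimately do, the assembly is correct.
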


Let $(\Sigma_{A},\sigma_{A})$ be the countable state Markov shift
derived by the Markov partition $\Sigma_{A}({\rm I})$ and $\Sigma_{A}({\rm II})$.
We write the alphabet set of $\TMS_{A}$ by
\[
{\cal S}_{A}:=\{\underline{e}\in\bigcup_{n\geq N^{*}+1}{\cal S}^{n}:\sigma_{1}^{\#}[\underline{e}]\in\Sigma_{A}({\rm I})\cup\Sigma_{A}({\rm II})\}.
\]
Let $\pi_{A}:\Sigma_{A}\mapsto A\subset\Sigma_{1}$ denote the natural
coding map. For $x\in\Sigma_{A}$, we use $x_{0}$ to denote the letter
in the zero-th coordinate. Notice that we can always write $x_{0}=(s_{-N^{\#}},...,s_{n-N^{\#}-1})$
in terms of ${\cal S}$ letters, and in this representation $n-N^{*}$
is the $\sigma_{1}-$return time of $[x_{0}]$. 

\begin{rem}
\ \begin{enumerate}[font=\normalfont]

\item $\Sigma_{A}({\rm I})$ is composed by return time 1 (i.e.,
$N_{A}=1)$ cylinders, and $\underset{n+1\ {\rm terms}}{\underbrace{[x_{-N^{\#},...,}x_{n-N^{\#}}]}}\in\Sigma_{A}({\rm II})$
has return time $n-N^{*}$. 

\item There are only finitely different shapes $\mathfrak{s}(\underline{a})$
for all $\underline{a}\in{\cal S}_{A}$. 

\item The length $|\underline{a}|$ for $\underline{a}\in{\cal S}_{A}$
is unbounded. 

\end{enumerate}
\end{rem}

Recall that every $x=(x_{k})\in\Sigma_{A}$ determines a point $\pi_{A}(x)=(s_{i})\in\Sigma_{1}$,
and $\pi_{A}(x)$ corresponds to a unit tangent vector $v(x)\in S_{A}\subset(\partial D_{0})_{{\rm in}}$.
We write $\xi(x)$ the attracting limit point of $v(x)$ and $\eta(x)$
the repelling fixed point of $v(x)$. Since $\xi(x)=\bigcap_{k\geq1}I_{s_{1},...,s_{k}}^{+}$and
$\eta(x)=\bigcup_{k\leq0}I_{s_{0},...,s_{k}}^{-}$ where $\pi_{A}(x)=(s_{i})_{i\in\Z}$,
we know that $\xi(x)$ only depends on $x^{+}=(x_{k})_{k\geq0}$ and
$\eta(x)$ only depends on $x^{-}=(x_{k})_{k\leq0}$ . 

\begin{defn}
The \textit{geometric potential} $\tau:\Sigma_{A}\to\R$ is defined
as 
\[
\tau(x):=B_{\xi(x)}(o,(g_{x_{0}})o)
\]
 where $o$ is the origin, $x_{0}=(s_{-N^{\#}},...,s_{n-N^{\#}-1})\in{\cal S}_{A}$,
and $g_{x_{0}}=g_{s_{1}}\circ...\circ g_{s_{n-N^{*}}}$.
\end{defn}

\begin{prop}
[Geometric Potential (I), \cite{Ledrappier:2008wq}, Lemma 2.2] Let
$(\Sigma_{A},\sigma_{A})$ be the Markov shift constructed above.
Then
\end{prop}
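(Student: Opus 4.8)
The plan is to extract each property of the geometric potential directly from the defining identity $\tau(x)=B_{\xi(x)}(o,g_{x_0}o)$, leaning only on the cocycle and equivariance relations for the Busemann function together with the uniform geometric contraction of the intervals $I^{+}_{s_1,\dots,s_k}$ that the passage to the induced shift $\sigma_A$ provides. First I would dispose of the coordinate dependence: $g_{x_0}$ depends only on the zeroth letter $x_0\in\mathcal{S}_A$, and, as was already observed above, $\xi(x)$ depends only on $x^{+}=(x_k)_{k\geq 0}$; hence $\tau$ is a function of $x^{+}$ alone, so that all of its thermodynamic formalism may be carried out on the one-sided shift.

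Next, for local H\"older continuity, take $x,y$ in a common $\Sigma_A$-cylinder $[x_0,\dots,x_n]$. Then $x_0=y_0$, hence $g_{x_0}=g_{y_0}$ and
\[
|\tau(x)-\tau(y)| = \bigl|B_{\xi(x)}(o,g_{x_0}o)-B_{\xi(y)}(o,g_{x_0}o)\bigr| \leq L\, d(\xi(x),\xi(y)),
\]
where I use that $\xi\mapsto B_{\xi}(p,q)$ is Lipschitz on $\vbdy\D$ with a constant depending continuously on $(p,q)$, and that the equivariance $B_{g\xi}(go,go')=B_{\xi}(o,o')$ lets one normalize so the arguments stay in a fixed compact set. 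It then remains to bound $d(\xi(x),\xi(y))$: along $\Sigma_A$-cylinders the intervals $I^{+}_{x_0,\dots,x_n}$ shrink at a uniform geometric rate $C\theta^{n}$, which is precisely what deleting the repeating vertex cycles $\mathfrak{C}^{*}$ and fixing the window size $N^{*}$ as in \cite{Ledrappier:2008wq} is designed to produce. This yields $V_n(\tau)\leq C'\theta^{n}$.

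For the lower bound $\inf\tau>0$ I would use the dynamical reading of $\tau$: it is the first-return time of the geodesic $g_t(v(x))$ to the section $S_A$, since flowing from $v(x)$ displaces the footpoint toward $\xi(x)$ by hyperbolic length equal to the elapsed time, and $B_{\xi(x)}(o,g_{x_0}o)$ is exactly that displacement at the moment the orbit is folded back into $(\partial D_0)_{\mathrm{in}}$ by the nontrivial element $g_{x_0}\in\Gamma$. As $o$ lies in the interior of $D_0$ the orbit must travel a definite length before leaving $D_0$, giving a uniform $\delta>0$ which I would make explicit from the inradius of $D_0$ at $o$. This also identifies the suspension flow $(\Sigma_{A,\tau},\phi_t)$ with $g_t\colon\Omega_0\to\Omega_0$ up to the finite-to-one coding map, and for $x\in\mathrm{Fix}^n(\sigma_A)$ the cocycle identity $B_{\xi}(o,gh\,o)=B_{\xi}(o,g\,o)+B_{g^{-1}\xi}(o,h\,o)$ telescopes $S_n\tau(x)$ to $B_{\gamma_+}(o,\gamma o)=l[\gamma]$, the translation length of the hyperbolic element $\gamma$ obtained by composing the corresponding generators --- i.e. the length of the associated closed geodesic.

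The step I expect to be the main obstacle is the uniform geometric contraction of the cylinders $I^{+}_{x_0,\dots,x_n}$ along $\Sigma_A$: for the unmodified coding $\sigma_1$ this genuinely fails near the parabolic vertex cycles, where the corresponding intervals shrink only polynomially, and establishing it requires the combinatorial analysis of $\mathfrak{C}$, $\mathfrak{C}^{*}$ and the choice of $N^{*}$ carried out in \cite{Ledrappier:2008wq}. Granting that geometric input, the rest is routine bookkeeping with the Busemann cocycle and equivariance.
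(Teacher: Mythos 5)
This proposition is quoted verbatim from \cite{Ledrappier:2008wq} (Lemma 2.2); the paper offers no proof of its own, so your proposal is really a reconstruction of the Ledrappier--Sarig argument. As such it is on the right track for items (2) and (3), and you correctly identify the genuine crux --- the uniform geometric contraction of the intervals $I^{+}_{x_0,\dots,x_n}$ along $\Sigma_A$-cylinders, which fails for $\sigma_1$ near the parabolic vertex cycles and is exactly what the passage to $\mathfrak{C}^{*}$, $N^{*}$ and the induced shift is engineered to restore. Two caveats, though. First, your Lipschitz bound $|B_{\xi(x)}(o,g_{x_0}o)-B_{\xi(y)}(o,g_{x_0}o)|\leq L\,d(\xi(x),\xi(y))$ cannot have a constant $L$ uniform over the infinite alphabet: in the disk model $B_\xi(o,y)=\ln\frac{1-|y|^2}{|\xi-y|^2}$ and $g_{x_0}o\to\vbdy\D$ as $|x_0|\to\infty$, so the $\xi$-derivative blows up unless one also uses that $\xi(x)$ is confined to $I^{+}_{x_0,\dots}$ at a controlled distance from $g_{x_0}o$. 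Your appeal to equivariance to ``normalize into a compact set'' gestures at this but does not resolve it; this estimate is again part of the content of the cited lemma, not routine bookkeeping.

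Second, and more substantively, your treatment of item (4) proves the wrong statement. You claim $\inf\tau>0$ via an inradius argument, but $\tau(x)=B_{\xi(x)}(o,g_{x_0}o)$ is \emph{not} pointwise equal to the first-return time to $S_A$; the two differ by the coboundary $B_{\xi}(p,o)-B_{g_{x_0}^{-1}\xi}(\sigma_A\text{-image of }p,o)$ coming from moving the basepoint from $o$ to the footpoint on $\partial D_0$. That is precisely why the proposition only asserts the eventual positivity of Birkhoff sums $S_n\tau\geq C$ for $n\geq K$, and why the paper's subsequent remark replaces $\tau$ by a \emph{cohomologous} $\tau'$ bounded away from zero. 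A correct argument for (4) bounds $S_n\tau(x)=B_{\xi(x)}(o,g_{x_0}\cdots g_{x_{n-1}}o)$ below by comparing it to the travel distance of the geodesic between its first and $n$-th returns to $S_A$, absorbing the basepoint coboundary (which is bounded on $S_A$) into the constant for $n$ large. Your telescoping identity for periodic points and the identification $B_{\gamma_+}(o,\gamma o)=l[\gamma]$ in item (1) are fine, though the uniqueness/``vice versa'' half needs the injectivity of the coding on $\Omega_0$, which you do not address.
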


\begin{enumerate}[font=\normalfont]

\item Suppose $v$ generates a closed geodesics, namely $g_{l(v)}v=v,$
then there exists a unique (up to permutations) $x=(\overline{x_{1}x_{2}...x_{m}})\in{\rm Fix}^{m}(\Sigma_{A})$
such that $l(v)=S_{m}\tau(x)$, and vice versa.

\item $\tau$ is locally H\"older continuous.

\item $\tau$ only depends on the future coordinates, i.e., if $x_{0}^{\infty}=y_{0}^{\infty}$
then $\tau(x)=\tau(y)$.

\item $\exists C,K>0$ such that $\tau(x)+\tau(\sigma(x))+...+\tau(\sigma^{n}(x))\geq C$
for all $n\geq K$.

\end{enumerate}

Since the geometric potential $\tau$ is only dependent on the future
coordinate, we can focus on $(\TMS_{A},\sigma_{A})$ the one-sided
countable Markov shift deduced from $(\Sigma_{A},\sigma_{A})$ by
forgetting the past coordinate.

\begin{prop}
[Geometric Potential (II), \cite{Ledrappier:2008wq}, Lemma 3.1]\label{prop:Geo_Poten_II}
On the one-sided countable Markov shift $(\TMS_{A},\sigma_{A})$,
we have
\end{prop}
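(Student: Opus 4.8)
\emph{Proof strategy.} The goal is to control the geometric potential $\tau$ on $\TMS_A$ one cylinder at a time and then read off the behaviour of the pressure function. By part~(3) of the preceding proposition $\tau$ depends only on the future coordinate, so all of this can be carried out on the one-sided shift; fix a letter $\underline a\in\mathcal S_A$ and let $[\underline a]$ be the corresponding one-sided cylinder, on which $\tau(x)=B_{\xi(x)}(o,g_{x_0}o)$ with $g_{x_0}=g_{s_1}\cdots g_{s_m}$ a composition of $m=|\underline a|-N^{*}$ generators and $\xi(x)\in\bigcap_{k\ge1}I^{+}_{s_1,\dots,s_k}$.

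First I would dispose of the Type~I letters and of regularity. If $\underline a\in\Sigma_A(\mathrm I)$ then $m$ is uniformly bounded, so $g_{x_0}$ runs over a finite set of M\"obius maps while $\xi(x)$ stays in a fixed compact sub-arc of $\vbdy\D$ bounded away from the ideal vertices of $D_0$; hence $\tau$ is bounded above and below on $[\underline a]$ by constants depending only on the finitely many shapes $\mathfrak s(\underline a)$. Local H\"older continuity of $\tau$ on $[\underline a]$, and the uniform bound $\sup_{[\underline a]}\tau-\inf_{[\underline a]}\tau\le C$, both follow from the uniform contraction of the generators $g_i$ on the nested intervals $I^{+}_{s_1,\dots,s_k}$ --- the same Schottky-type estimate behind the summable variations of $\tau$ in part~(2) of the preceding proposition.

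The heart of the matter --- and the step I expect to be the main obstacle --- is the Type~II estimate. Let $\underline a$ have shape $\mathfrak s(\underline a)=(k,a,\underline w,c)$ and let $l$ be the number of times the vertex cycle $\underline w$ is repeated, so $|\underline a|=|\underline w|\,l+O(1)$. The word $x_0$ is then a long run of $\underline w$ flanked by buffers of bounded length, so the combinatorics of vertex cycles from \cite[\S2.1]{Ledrappier:2008wq} yield a factorisation $g_{x_0}=\alpha\,p^{\,l}\,\beta$, where $p$ is the parabolic isometry fixing the ideal vertex $v$ of $D_0$ attached to $\underline w$ and $\alpha,\beta$ range over a fixed finite set. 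Geometrically, $\tau|_{[\underline a]}$ is the length of a geodesic excursion winding $l$ times around the cusp cut out by $v$; normalising $v=\infty$ and $p:z\mapsto z+1$ in the upper half-plane, such an excursion reaches hyperbolic depth $\log l+O(1)$ and therefore has length $2\log l+O(1)$, so by the cocycle property of the Busemann function
\[
\tau|_{[\underline a]}=B_{\xi(x)}(o,g_{x_0}o)=2\log l+O(1)=2\log|\underline a|+O(1),
\]
uniformly over all Type~II letters. Making this uniformity honest is the crux: one must (i) bound $\alpha$ and $\beta$ uniformly --- precisely what the buffer of length $N^{*}$ and the structure of $\mathfrak C$ are designed to guarantee --- and (ii) control the error from basing the Busemann function at the moving attracting point $\xi(x)\in I^{+}_{s_1,\dots,s_{|\underline w|l}}$ rather than at $v$, using $|\xi(x)-v|=O(l^{-1})$ (parabolic, hence polynomial, contraction) against the $O(\log l)$ size of the excursion.

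Finally I would extract the finiteness and the phase transition of the pressure. After replacing $\tau$ by a cohomologous strictly positive locally H\"older function --- possible since $S_n\tau\ge C>0$ for $n\ge K$ by part~(4) of the preceding proposition, and harmless since cohomologous functions have the same pressure and Gibbs states --- the Phase Transition theorem of Section~\ref{sec:Preliminary} gives a threshold $s_\infty$. To identify it, note that for each $l\ge1$ there are at most $\#\{(k,a,\underline w,c)\}<\infty$ Type~II letters with exactly $l$ repetitions; combining this with the Type~II estimate and the boundedness of $\tau$ on the finitely many Type~I shapes, the one-step partition function obeys, for $s>0$,
\[
Z_1(-s\tau):=\sum_{\underline a\in\mathcal S_A}e^{-s\inf_{[\underline a]}\tau}\ \asymp\ 1+\sum_{l\ge1}l^{-2s},
\]
which converges exactly when $s>\tfrac12$. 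Since $(\TMS_A,\sigma_A)$ is topologically mixing with the BIP property, $P_{\sigma_A}(-s\tau)<\infty$ iff $Z_1(-s\tau)<\infty$; hence $s_\infty=\tfrac12$, the pressure $s\mapsto P_{\sigma_A}(-s\tau)$ is real analytic and strictly decreasing on $(\tfrac12,\infty)$, and $-s\tau$ has a unique Gibbs state there. Verbatim the same counting applied to the weighted potentials $-(sa)\tau_{\rho_1}-(sb)\tau_{\rho_2}$ will underlie the analyticity statements of Sections~\ref{sec:Phase-Transitions}--\ref{sec:Manhattan-and-Rigidity}.
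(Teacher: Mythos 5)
Your reconstruction of item (4) is sound and follows essentially the same route as the paper and as \cite{Ledrappier:2008wq}: the Type I bound comes from the finiteness of $\Sigma_{A}({\rm I})$ together with the boundedness of $\xi\mapsto B_{\xi}(o,g_{s_{1}}o)$ for the finitely many interior points $g_{s_{1}}o$ (this is in fact the only part the paper proves itself; everything else is cited to \cite{Ledrappier:2008wq}, Lemma 3.1), and your Type II estimate is the standard $2\log l+O(1)$ cusp-excursion computation, for which the cleanest uniform control is $B_{\xi(x)}(o,g_{x_{0}}o)=d(o,g_{x_{0}}o)+O(1)$ for $\xi(x)$ in the shadow $I^{+}_{s_{1},\dots,s_{m}}$ of $g_{x_{0}}o$, rather than comparing base points $\xi(x)$ and $v$. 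Your tail count $Z_{1}(-s\tau)\asymp 1+\sum_{l}l^{-2s}$ is also correct, but note that it is really the content of Theorem \ref{thm:phase-transition}, not of this proposition.

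The genuine gap is that items (1)--(3) are not established. Your counting argument shows that $P(-s\tau)$ is finite and real analytic for $s>\tfrac12$, infinite for $s<\tfrac12$, and that $-s\tau$ admits a unique Gibbs state in the first regime; it says nothing about \emph{where} the pressure vanishes, so $P(-\tau)=0$ (item (3)) does not follow from anything you wrote. The way this is actually proved is inseparable from item (2), which your proposal does not address at all: one exhibits Lebesgue measure on $\vbdy\D$ as a conformal density of exponent $1$, so that the induced measure on the section is a Gibbs measure for $-\tau$ with zero pressure and lifts to the Liouville measure via $R$ and $\widetilde{\pi}_{A}$. (Appealing instead to Bowen's formula plus $\delta_{\Gamma}=1$ would be circular here, since Corollary \ref{cor:Bowen-formula-tau} is itself deduced from this proposition.) Finally, item (1) asks for a unique \emph{equilibrium} state with $\int\tau\dd m_{-\tau}<\infty$, not merely a Gibbs state; upgrading requires the additional check $\sum_{\underline{a}}\bigl(\sup_{[\underline{a}]}\tau\bigr)e^{-\inf_{[\underline{a}]}\tau}\asymp\sum_{l}l^{-2}\log l<\infty$, which is immediate from your estimates but must be stated.
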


\begin{enumerate}[font=\normalfont]

\item $\tau$ has a unique equilibrium state $m_{-\tau}$ and $\int_{\TMS_{A}}\tau\dd m_{-\tau}<\infty$.

\item The Liouville measure $m_{L}$ on $T^{1}X$ is given by $m_{L}=R(m_{-\tau})\circ\widetilde{\pi}_{A}^{-1}$
where $R:{\cal {\cal M}_{\sigma}\to{\cal M}_{\tau}}$ given in Theorem
\ref{thm:lif_measures}.

\item $P(-\tau)=0$.

\item \label{Prop:Geo_Poten_II_tau_bdd}$\tau$ is bounded on $\Sigma_{A}({\rm I})$,
and there exists $C_{1}>0$ such that $2\ln|x_{0}|-C_{1}\leq\tau(x)\leq2\ln|x_{0}|+C_{1}$
for all $x\in\Sigma_{A}({\rm II})$. 

\end{enumerate}

\begin{proof}
Everything is in \cite[Lemma 3.1]{Ledrappier:2008wq}, and only the
first assertion of (\ref{Prop:Geo_Poten_II_tau_bdd}) needs more explorations.
Let $[x_{0}]\in\Sigma_{A}({\rm I)}$ and $x\in[x_{0}]$. We can write
$x_{0}=(s_{-N^{\#}},...,s_{\frac{N^{*}}{2}-1},s_{\frac{N^{*}}{2}})$,
$g_{x_{0}}(x)=g_{s_{1}}$, and $\tau(x)=B_{\xi(x)}(o,g_{s_{1}}o)$
where $s_{i}\in{\cal S}$ for $i=-N^{\#},...,\frac{N^{*}}{2}$. Recall
that in the disc model, $B_{\xi}(o,y)=\ln\frac{1-|y|^{2}}{|\xi-y|^{2}}$.
Since $\xi(x)\in I_{s_{1},s_{2},...,s_{\frac{N^{*}}{2}}}^{+}$, it
is not hard to see $B_{\xi(x)}(o,g_{s_{1}}o)=\ln(\frac{1-|g_{s_{1}}o|^{2}}{|\xi(x)-g_{s_{1}}o|})$
is (uniformly) bounded for all $x\in[x_{0}]$. Notice that this bound
depends on $[x_{0}]\in\Sigma_{A}({\rm I}).$ We can find a universal
bound $\tau(x)$ on $\Sigma_{A}({\rm I})$ because $|\Sigma_{A}({\rm I})|<\infty$. 
\end{proof}

\begin{rem}
\

\begin{enumerate}[font=\normalfont]

\item By standard techniques in symbolic dynamics, we know $\tau$
is cohomologus to $\tau'$ which is locally H\"older and bounded
away from zero (cf. \cite[Lemma 3.8]{Kao:2018th}). From now on, we
will use $\tau'$ to replace $\tau$ whenever $\tau$ needs to be
bounded away from zero. Abusing the notation, we will continue denote
$\tau'$ by $\tau$. 

\item In \cite{Ledrappier:2008wq}, the constant $C_{1}$ in Proposition
\ref{prop:Geo_Poten_II} (\ref{Prop:Geo_Poten_II_tau_bdd}) depends
on the shape of $x_{0}$. Because there are only finitely many shapes,
we can replace it by a universal constant. 

\end{enumerate}
\end{rem}

\subsection{Type-preserving Finite Area Fuchsian Representations}

In the this subsection, we consider $\rho_{1},\rho_{2}$ two type-preserving
finite area Fuchsian representations. The Fenchel-Neilsen Isomorphism
Theorem (cf. Theorem \ref{thm:Fenchel-Neilsen-Thm}) shows that there
exists a bilipschitz map taking the limit set $\L(\rho_{1}(\pi_{1}S))$
and fundamental domain of $X_{\rho_{1}}$ to $\L(\rho_{2}(\pi_{1}S))$
and the fundamental domain of $X_{\rho_{2}}$, and hence $\Lambda_{0}(\rho_{1})$
to $\L_{0}(\rho_{2})$. Hence, the suspension flows corresponding
the geodesic flows on $\Omega_{0}(\rho_{1})$ and $\Omega_{0}(\rho_{2})$
correspond to the same Markov shift $(\TMS_{A},\s_{A})$ but different
roof functions $\tau_{\rho_{1}}$, $\tau_{\rho_{2}}$, respectively.
The following result shows that we have a nice control of these roof
functions. 

\begin{cor}
\label{cor:bdd-diff-loof-fcn}There exists $C>0$ such that $|\tau_{\rho_{1}}(x)-\tau_{\rho_{2}}(x)|<C$
for all $\TMS_{A}$.
\end{cor}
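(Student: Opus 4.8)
The plan is to read off the bounded difference directly from the uniform size estimates for each individual geometric potential recorded in Proposition~\ref{prop:Geo_Poten_II}, using the fact --- established in the paragraph preceding the statement --- that the type-preserving representations $\rho_{1},\rho_{2}$ are coded by the \emph{same} countable Markov shift $(\Sigma_{A},\sigma_{A})$, with the same alphabet ${\cal S}_{A}$ and the same ${\cal S}$-word-length $|\cdot|$ on symbols; only the roof functions $\tau_{\rho_{1}},\tau_{\rho_{2}}$ differ. Accordingly I would split $\Sigma_{A}$ into its Type~I and Type~II parts and estimate on each.

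On $\Sigma_{A}({\rm I})$ there are only finitely many cylinders, and Proposition~\ref{prop:Geo_Poten_II}(\ref{Prop:Geo_Poten_II_tau_bdd}) says each of $\tau_{\rho_{1}},\tau_{\rho_{2}}$ is bounded there, say by $M_{1}$ and $M_{2}$; hence $|\tau_{\rho_{1}}(x)-\tau_{\rho_{2}}(x)|\le M_{1}+M_{2}$ for $x\in\Sigma_{A}({\rm I})$. On $\Sigma_{A}({\rm II})$, the same proposition --- together with the remark that the implicit constant may be taken uniform over the finitely many shapes --- gives $C^{(1)},C^{(2)}>0$ with $|\tau_{\rho_{i}}(x)-2\ln|x_{0}||\le C^{(i)}$. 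Since the quantity $2\ln|x_{0}|$ depends only on the zeroth symbol of $x$ written as a word in ${\cal S}$, hence only on the common shift $(\Sigma_{A},\sigma_{A})$ and not on the representation, the triangle inequality yields $|\tau_{\rho_{1}}(x)-\tau_{\rho_{2}}(x)|\le C^{(1)}+C^{(2)}$ on $\Sigma_{A}({\rm II})$. Then $C:=\max\{M_{1}+M_{2},\,C^{(1)}+C^{(2)}\}+1$ works. I would also observe that passing from each $\tau_{\rho_{i}}$ to its cohomologous representative bounded away from zero (as in the remark after Proposition~\ref{prop:Geo_Poten_II}) changes the potential by a bounded function, so the bounded-difference conclusion is unaffected.

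I do not expect a genuine obstacle: the whole content sits in the Ledrappier--Sarig estimate $\tau_{\rho}(x)=2\ln|x_{0}|+O(1)$ on $\Sigma_{A}({\rm II})$ and boundedness on $\Sigma_{A}({\rm I})$, once one notes that the leading term $2\ln|x_{0}|$ is the \emph{same} for $\rho_{1}$ and $\rho_{2}$ because they share the coding. The tempting alternative --- comparing the Busemann expressions defining $\tau_{\rho_{1}}$ and $\tau_{\rho_{2}}$ through the equivariant bilipschitz boundary map of Theorem~\ref{thm:Fenchel-Neilsen-Thm} --- I would avoid, since a bilipschitz map distorts Busemann functions only multiplicatively, which controls Birkhoff sums over periodic orbits but not the pointwise additive difference wanted here.
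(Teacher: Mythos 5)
Your proof is correct and is essentially the argument the paper intends: the paper simply states that the corollary ``follows immediately from Proposition~\ref{prop:Geo_Poten_II}~(\ref{Prop:Geo_Poten_II_tau_bdd})'', and your Type~I/Type~II decomposition with the triangle inequality against the common $2\ln|x_{0}|$ term spells out exactly why. The aside about the cohomologous replacement and the caution about the bilipschitz route are sensible but not needed for the assertion.
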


\begin{proof}
It follows immediately from Proposition \ref{prop:Geo_Poten_II} (\ref{Prop:Geo_Poten_II_tau_bdd}).
\end{proof}

In the second part of this subsection, we discuss the intersection
number ${\rm I}(\rho_{1},\rho_{2})$ of $\rho_{1}$ and $\rho_{2}$
proposed by Thurston. Recall that ${\rm I}(\rho_{1},\rho_{2})$ of
$\rho_{1}$ and $\rho_{2}$ is defined as 
\[
{\rm I}(\rho_{1},\rho_{2}):=\lim_{n\to\infty}\frac{l_{2}[\g_{n}]}{l_{1}[\g_{n}]}
\]
 where $\{[\g_{n}]\}_{n=1}^{\infty}$ is a sequence of conjugacy classes
for which the associated closed geodesics $\g_{n}$ become equidistributed
on $\rho_{1}(\pi_{1}S)\backslash\H$ with respect to the Liouville
measure. However, it is unclear why ${\rm I}(\rho_{1},\rho_{2})$
is well-defined, especially, when $S$ has punctures. We will discuss
this issue in Proposition \ref{prop:intersection-dynamics} where
we give ${\rm I}(\rho_{1},\rho_{2})$ a dynamics characterization. 

We now can state and prove the main result of this subsection: characterizing
${\rm I}(\rho_{1},\rho_{2})$ by the symbolic model.

\begin{prop}
\label{prop:intersection-dynamics} Suppose $\rho_{1},\rho_{2}$ are
two type-preserving finite area Fuchsian representations. Then the
intersection ${\rm I}(\rho_{1},\rho_{2})$ is well-defined. Moreover,
if $\tau$, $\kappa$ are the geometric potential for $\rho_{1}$,
$\rho_{2}$, respectively, then 
\[
{\rm I(\rho_{1},\rho_{2})=}\frac{\int\kappa\dd m_{-\tau}}{\int\tau\dd m_{-\tau}}
\]
where $m_{-\tau}$ is the equilibrium state of $\tau$. 
\end{prop}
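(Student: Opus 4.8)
The plan is to use the symbolic model and the correspondence between closed geodesics and periodic orbits of $\sigma_A$ (Proposition on Geometric Potential (I)) to translate the equidistribution hypothesis defining ${\rm I}(\rho_1,\rho_2)$ into a statement about periodic orbits equidistributing toward the equilibrium state $m_{-\tau}$, and then apply the ergodic theorem. First I would fix the setup: $\rho_1$ and $\rho_2$ are type-preserving, so by the discussion preceding Corollary \ref{cor:bdd-diff-loof-fcn} they share the Markov shift $(\TMS_A,\sigma_A)$ with roof functions $\tau=\tau_{\rho_1}$ and $\kappa=\tau_{\rho_2}$, and Corollary \ref{cor:bdd-diff-loof-fcn} gives $\|\tau-\kappa\|_\infty<\infty$. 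A closed geodesic $\g_n$ on $X_{\rho_1}$ corresponds to a periodic point $x^{(n)}\in{\rm Fix}^{m_n}(\Sigma_A)$ with $l_1[\g_n]=S_{m_n}\tau(x^{(n)})$; the same periodic point, read with the $\rho_2$-potential, gives $l_2[\g_n]=S_{m_n}\kappa(x^{(n)})$ (this uses that the Fenchel--Nielsen bilipschitz map identifies the codings and that the translation length of the corresponding hyperbolic element in $\rho_2(\pi_1 S)$ is recovered by $S_{m_n}\kappa$). Hence
\[
\frac{l_2[\g_n]}{l_1[\g_n]}=\frac{S_{m_n}\kappa(x^{(n)})}{S_{m_n}\tau(x^{(n)})}=\frac{\frac{1}{m_n}S_{m_n}\kappa(x^{(n)})}{\frac{1}{m_n}S_{m_n}\tau(x^{(n)})}.
\]

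Next I would make precise the meaning of ``$\g_n$ becomes equidistributed with respect to the Liouville measure.'' Via Proposition \ref{prop:Geo_Poten_II}(2), the Liouville measure $m_L$ on $T^1X_{\rho_1}$ pulls back, through the suspension semiconjugacy $\widetilde\pi_A$, to $R(m_{-\tau})$, the flow-invariant measure on $\TMS_{A,\tau}$ induced by $m_{-\tau}$. A sequence of closed geodesics equidistributing toward $m_L$ corresponds to the sequence of periodic orbit measures $\mu_n:=\frac{1}{m_n}\sum_{j=0}^{m_n-1}\delta_{\sigma_A^j(x^{(n)})}$ on $\TMS_A$ converging weakly (after normalization by return time, i.e. as orbit measures for the suspension flow) to $m_{-\tau}$; equivalently $\frac{1}{m_n}S_{m_n}f(x^{(n)})\to\int f\dd m_{-\tau}$ for a suitable class of test functions $f$. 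Applying this to $f=\tau$ gives $\frac{1}{m_n}S_{m_n}\tau(x^{(n)})\to\int\tau\dd m_{-\tau}<\infty$ (finiteness from Proposition \ref{prop:Geo_Poten_II}(1)), and to $f=\kappa$ gives $\frac{1}{m_n}S_{m_n}\kappa(x^{(n)})\to\int\kappa\dd m_{-\tau}$; note $\int\kappa\dd m_{-\tau}\le\int\tau\dd m_{-\tau}+\|\tau-\kappa\|_\infty<\infty$ by Corollary \ref{cor:bdd-diff-loof-fcn}, so the ratio is a well-defined finite positive number. Dividing the two limits yields the asserted formula, and simultaneously shows the limit is independent of the equidistributing sequence chosen, proving ${\rm I}(\rho_1,\rho_2)$ is well-defined.

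The main obstacle is the justification that $\tau$ and $\kappa$ are legitimate test functions for the equidistribution statement: they are unbounded (they grow like $2\ln|x_0|$ on $\Sigma_A({\rm II})$ by Proposition \ref{prop:Geo_Poten_II}(\ref{Prop:Geo_Poten_II_tau_bdd})), so convergence of periodic orbit measures in the weak topology does not immediately give convergence of the integrals $\int\tau\dd\mu_n$ — one needs a uniform-integrability / no-escape-of-mass argument near the cusps. I expect to handle this exactly as the unboundedness of the roof function is handled elsewhere in the paper: the summability/regularity control on $\tau$ around the cusps (the logarithmic bound plus local H\"older continuity, and the fact that $m_{-\tau}$ has $\int\tau\dd m_{-\tau}<\infty$) is precisely what prevents escape of mass, so that $\int\tau\dd\mu_n\to\int\tau\dd m_{-\tau}$; the same bound transfers to $\kappa$ via $|\tau-\kappa|<C$. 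A secondary point to spell out is the precise sense in which the Liouville measure on $X_{\rho_1}$ forces the periodic orbit measures $\mu_n$ on $\TMS_A$ (rather than on the suspension) to converge to $m_{-\tau}$ — this is the content of Theorem \ref{thm:lif_measures} applied to orbit measures, relating the flow-orbit measure to its base measure by dividing by the average return time $\frac{1}{m_n}S_{m_n}\tau(x^{(n)})$, which closes the loop with the computation above.
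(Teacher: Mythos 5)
Your overall strategy (translate equidistribution into convergence of periodic-orbit measures toward $m_{-\tau}$ and read off the ratio of Birkhoff sums) is the right general frame, but you have correctly located, and then not closed, the decisive gap: $\tau$ and $\kappa$ are unbounded on $\TMS_A$, and weak-$*$ convergence of the orbit measures $\mu_n$ to $m_{-\tau}$ does \emph{not} imply $\int\tau\dd\mu_n\to\int\tau\dd m_{-\tau}$, even knowing $\int\tau\dd m_{-\tau}<\infty$ and the logarithmic growth of $\tau$ on $\Sigma_A({\rm II})$. To get that you would need a uniform tail estimate of the form $\sup_n\sum_{|x_0|>N}\tau|_{[x_0]}\cdot\mu_n([x_0])\to0$ as $N\to\infty$, i.e.\ a quantitative statement that the periodic orbits do not spend a stubborn fraction of their Birkhoff sum on ever-deeper cusp excursions; nothing in the weak convergence or in the regularity of $\tau$ supplies this for free, and your appeal to "how the unboundedness is handled elsewhere" does not point to an actual mechanism. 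Worse, your reduction from flow-level equidistribution to base-level equidistribution already requires $\frac{1}{m_n}S_{m_n}\tau(x^{(n)})\to\int\tau\dd m_{-\tau}$ (the normalization by average return time), which is precisely the unproved statement, so the argument is circular as written.

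The paper avoids all of this with one trick you are missing: it never integrates $\tau$ or $\kappa$ against the orbit measures. Instead it forms the reparametrization function $\psi(x,t)=\kappa(x)/\tau(x)$ on the suspension space $\TMS_{\tau}$, which is \emph{bounded} (since $|\tau-\kappa|<C$ by Corollary \ref{cor:bdd-diff-loof-fcn} and $\tau$ is, after a cohomologous replacement, bounded away from zero) and locally H\"older. The exact identity $\int_{\lambda_n}\psi\,\dd t=l_2[\g_n]$ gives $\frac{l_2[\g_n]}{l_1[\g_n]}=\frac{\langle\delta_{\lambda_n},\psi\rangle}{l(\lambda_n)}$ on the nose, so weak-$*$ convergence of the normalized flow orbit measures to $R(m_{-\tau})$ (which exists because $R(m_{-\tau})$ is the ergodic measure of maximal entropy of a topologically mixing flow) immediately yields convergence to $\int\psi\dd R(m_{-\tau})$, and Theorem \ref{thm:lif_measures} evaluates this as $\int\kappa\dd m_{-\tau}\big/\int\tau\dd m_{-\tau}$. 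No uniform integrability, no separate limits of numerator and denominator. If you want to salvage your base-level route you must either prove the tail bound above or restructure the argument around a bounded observable as the paper does.
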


\begin{proof}
Since the lifted measure $R(m_{-\tau})$ on $\Sigma_{\tau}^{+}$ is
the unique measure of maximum entropy, we know it ergodic. Plus $(\TMS_{A},\sigma_{A})$
is topologically mixing, we know $R(m_{-\tau})$ is the weak-start
limit of some period orbits $\{\lambda_{n}\}\subset\TMS_{\tau}$,
that is, $\frac{\delta_{\lambda_{n}}}{l(\lambda_{n})}\overset{weak^{*}}{\longrightarrow}R(m_{-\tau})$
where\textcolor{red}{{} }$\lambda_{n}=\{(x_{n},t)\in\TMS_{\tau}:\ 0\leq t\leq S_{m}\tau(x_{n}),\ x_{n}\in{\rm Fix}^{m}\ {\rm for}\ {\rm some\ }m\}$,
$\delta_{\lambda_{n}}$ is the Dirac measure on $\lambda_{n}$, and
$l(\lambda_{n}):=S_{m}\tau(x_{n})=l_{1}[\g_{n}]$ with $\g_{n}\in\pi_{1}S$
the hyperbolic elements corresponding to $\lambda_{n}$. 

Let us consider the (symbolic) reparametrization function $\psi(x,t)=\frac{\kappa(x)}{\tau(x)}$
for $0\leq t\leq\tau(x)$ .\textcolor{red}{{} }By Proposition \ref{prop:Geo_Poten_II}
and Corollary \ref{cor:bdd-diff-loof-fcn}, we know $\psi$ is a bounded
locally H\"{o}lder continuous function such that $\int_{\rho_{1}(\g_{n})}\psi(\phi_{t})\dd t=l_{2}[\g_{n}]$.
Thus , 
\[
\left.\frac{\langle\delta_{o_{n}},\psi\rangle}{l(o_{n})}\right|_{\TMS_{\tau}}=\frac{l_{2}[\g_{n}]}{l_{1}[\g_{n}]}\to\int_{\TMS_{\tau}}\psi\dd R(m_{-\tau})=\frac{\int_{\TMS}\kappa\dd m_{-\tau}}{\int_{\TMS}\tau\dd m_{-\tau}}
\]
where the last equality follows Theorem \ref{thm:lif_measures}.
\end{proof}

\section{Phase Transitions for Geodesic Flows\label{sec:Phase-Transitions}}

Through out this section, let $\rho_{1}$ and $\rho_{2}$ be two type-preserving
finite volume Fuchsian representations, and we write $X_{1}=\G_{1}\backslash\mathbb{D}$
and $X_{2}=\G_{2}\backslash\mathbb{D}$ where $\G_{1}=\rho_{1}(\pi_{1}(S))$
for $i=1,2$. Following the above section, let $(\TMS,\s)=(\TMS_{A},\s_{A})$
be the Markov shift associated with $X_{1}$ and $X_{2}$, and we
denote their geometric potentials by $\tau$ and $\kappa$, respectively. 

To derive the analyticity of pressure, we need to locate the place
where phase transition happens. As in \cite{Kao:2018th}, we have
the following observation. 

\begin{thm}
[Phase Transition] \label{thm:phase-transition}Suppose $a,b\geq0$,
$a+b\neq0$, and $\tau,\kappa$ are given above. Then we have 
\[
P(-t(a\tau+b\kappa))=\begin{cases}
{\rm analytic} & t>\frac{1}{2(a+b)},\\
\infty & t<\frac{1}{2(a+b)}.
\end{cases}
\]
Moreover, there exists a unique $t_{ab}\in(\frac{1}{2(a+b)},\infty)$
such that $P(-t_{a,b}(a\tau+b\kappa))=0$.
\end{thm}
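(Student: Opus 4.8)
The plan is to combine the Phase Transition Theorem for a single Hölder roof function (stated earlier, with threshold $s_\infty$) with the precise growth estimate on $\tau,\kappa$ from Proposition \ref{prop:Geo_Poten_II}(\ref{Prop:Geo_Poten_II_tau_bdd}), namely $2\ln|x_0|-C_1 \le \tau(x),\kappa(x) \le 2\ln|x_0|+C_1$ on $\Sigma_A(\mathrm{II})$ and boundedness on $\Sigma_A(\mathrm{I})$. Writing $g_{a,b}:=a\tau+b\kappa$, these bounds give $2(a+b)\ln|x_0| - (a+b)C_1 \le g_{a,b}(x) \le 2(a+b)\ln|x_0| + (a+b)C_1$ on $\Sigma_A(\mathrm{II})$, and $g_{a,b}$ bounded on $\Sigma_A(\mathrm{I})$; in particular $g_{a,b}>0$ (after replacing $\tau,\kappa$ by the cohomologous roof functions bounded away from zero, as in the Remark), so the Phase Transition Theorem applies and yields some threshold $s_\infty=s_\infty(a,b)$ with $P(-t g_{a,b})=\infty$ for $t<s_\infty$, real analytic for $t>s_\infty$, and a unique Gibbs state for $t>s_\infty$. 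The content of the theorem is the identification $s_\infty = \frac{1}{2(a+b)}$.

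First I would reduce the computation of $s_\infty$ to a series convergence criterion. For a topologically mixing BIP shift, $P(-tg_{a,b})<\infty$ iff the zeroth ``pressure-defining'' partition sum $Z_1(-tg_{a,b}) := \sum_{[x_0]} \sup_{x\in[x_0]} e^{-t g_{a,b}(x)}$ is finite (this is the Mauldin–Urbański/Sarig finiteness criterion for BIP systems), and $s_\infty$ is exactly the infimum of $t$ for which this holds. Splitting the sum over the finitely many states in $\Sigma_A(\mathrm{I})$ (a finite, hence harmless, contribution) and the states in $\Sigma_A(\mathrm{II})$, the bounds above show
\[
\sum_{[x_0]\in\Sigma_A(\mathrm{II})} e^{-t g_{a,b}(x)} \asymp \sum_{[x_0]\in\Sigma_A(\mathrm{II})} |x_0|^{-2t(a+b)},
\]
up to the multiplicative constant $e^{\pm t(a+b)C_1}$. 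So everything hinges on the counting function $\#\{[x_0]\in\Sigma_A(\mathrm{II}): |x_0|=N\}$, i.e. the number of alphabet letters of $\mathcal S_A$ of word-length $N$ (equivalently, return-time $N-N^*$) lying in the Type II part. Using the explicit description of Type II cylinders $B_{l,k}(a,\underline w,c)$: there are only finitely many shapes $\mathfrak s(\underline e)=(k,a,\underline w,c)$, and for each fixed shape the word-length grows linearly in the repetition parameter $l$, so the number of letters of length $N$ is bounded above and below by constants independent of $N$. Hence $\sum_{|x_0|=N}|x_0|^{-2t(a+b)} \asymp N^{-2t(a+b)}$, and $\sum_N N^{-2t(a+b)}$ converges iff $2t(a+b)>1$, i.e. iff $t > \frac{1}{2(a+b)}$. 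This gives $s_\infty = \frac{1}{2(a+b)}$, and divergence for $t<s_\infty$ forces $P(-tg_{a,b})=\infty$ there.

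For the last sentence, I would observe that on $(s_\infty,\infty)$ the function $t\mapsto P(-tg_{a,b})$ is real analytic, strictly decreasing (its derivative is $-\int g_{a,b}\,\dd\nu_{-tg_{a,b}}<0$ since $g_{a,b}>0$), and strictly convex; moreover $P(-s_\infty^+ g_{a,b})=+\infty$ and $P(-tg_{a,b}) \le \log\#\Sigma_A(\mathrm{I}) - 2t(a+b)\cdot(\text{lower bound}) \to -\infty$ as $t\to\infty$ (using $\sup(-g_{a,b})<\infty$ and that letters contribute arbitrarily large $g_{a,b}$), so by the intermediate value theorem there is a unique $t_{a,b}\in(s_\infty,\infty)=(\frac{1}{2(a+b)},\infty)$ with $P(-t_{a,b}g_{a,b})=0$. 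The main obstacle I anticipate is the counting estimate for the Type II alphabet letters of a given length: one must extract from the Ledrappier–Sarig combinatorics (finiteness of shapes, linear dependence of $|\underline e|$ on $l$) that $\#\{|x_0|=N\}$ stays comparable to a constant, since the convergence abscissa of the partition sum is entirely governed by this polynomial-versus-exponential growth dichotomy; the rest is bookkeeping with the uniform constant $C_1$.
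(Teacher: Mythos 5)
Your proposal is correct and follows essentially the same route as the paper: the Mauldin--Urba\'nski finiteness criterion $P(f)<\infty \iff Z_1(f)<\infty$, the split of $Z_1$ into the finite Type I contribution and the Type II sum controlled by the logarithmic bounds of Proposition \ref{prop:Geo_Poten_II}, identification of the abscissa $\frac{1}{2(a+b)}$, and then analyticity plus monotonicity (with $P\to+\infty$ at the threshold and $P<0$ for large $t$) for the unique zero. The only difference is that you spell out the counting of Type II letters of a given word-length (finitely many shapes, length linear in $l$), a point the paper's proof treats as clear.
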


\begin{proof}
By Theorem \ref{thm:analyticity-pressure}, we know it is sufficient
to show 
\[
P(-t(a\tau+b\kappa))=\begin{cases}
{\rm finite} & t>\frac{1}{2(a+b)},\\
\infty & t<\frac{1}{2(a+b)}.
\end{cases}
\]

Recall \cite[Theorem 2.19]{Mauldin:2003dn}, we know for any locally
H\"older continuous function $f$, $P(f)<\infty$ if and only if
\[
Z_{1}(f):=\sum_{x_{0}\in{\cal S}_{A}}e^{\sup\{f(x):x\in[x_{0}]\}}<\infty.
\]
By Proposition \ref{prop:Geo_Poten_II}, there exists constants $C_{1}$,
$C_{2}>0$ such that
\begin{alignat*}{1}
Z_{1}(-t(a\tau+b\kappa))= & \sum_{x_{0}\in\Sigma_{A}({\rm I})}e^{\sup\{-t(a\tau+b\kappa):x\in[x_{0}]\}}+\sum_{x_{0}\in\Sigma_{A}({\rm II})}e^{\sup\{-t(a\tau+b\kappa):x\in[x_{0}]\}}\\
\leq & C_{1}+\sum_{x_{0}\in\Sigma_{A}({\rm II})}\sum_{|x_{0}|=N^{*}+1}^{\infty}e^{-2t(a\tau+b\kappa))\log|x_{0}|+C}\\
= & C_{1}+C_{2}\sum_{x_{0}\in\Sigma_{A}({\rm II})}\sum_{|x_{0}|=N^{*}+1}^{\infty}e^{-2t(a\tau+b\kappa))\log|x_{0}|}.
\end{alignat*}
Similarly, there exists constants $C_{3}$, $C_{4}>0$ such that 
\[
Z_{1}(-t(a\tau+b\kappa))\geq C_{3}+C_{4}\sum_{x_{0}\in\Sigma_{A}({\rm II})}\sum_{|x_{0}|=N^{*}+1}^{\infty}e^{-2t(a\tau+b\kappa))\log|x_{0}|}.
\]
Thus, it is clear that $Z_{1}(-t(a\tau+b\kappa))<\infty$ if and only
if $t>\frac{1}{2(a+b)}$. 

Lastly, fix $a,b$ with $a,b\geq0,a+b\neq0$, then the computation
in \cite[Theorem 2.19]{Mauldin:2003dn} showed that, in our case,
$Z_{1}(-t(a\tau+b\kappa))\to\infty$ as $t\to\frac{1}{2(a+b)}$ implies
$P(-t(a\tau+b\kappa))\to\infty$ as $t\to\frac{1}{2(a+b)}$. In particular,
taking $t$ close to $\frac{1}{2(a+b)}$, we have $P(-t(a\tau+b\kappa))>0$.
Moreover, it is obvious that $P(-t(a\tau+b\kappa))<0$ when $t$ is
big enough. Hence, by the analyticity and the monotonicity of the
pressure, we know there exists a unique $t_{a,b}$ such that $P(-t_{a,b}(a\tau+b\kappa))=0$. 
\end{proof}

\begin{cor}
\label{cor:sol_analytic}The set $\{(a,b):\ a,b\geq0,a+b\neq0,{\rm \ and}\ P(-a\tau-b\kappa)=0\}$
is a real analytic curve. 
\end{cor}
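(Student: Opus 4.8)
The plan is to show that the zero set $\{(a,b):a,b\ge 0,\ a+b\neq 0,\ P(-a\tau-b\kappa)=0\}$ is, locally near each of its points, the graph of a real analytic function, and then to assemble these local graphs into a single analytic curve. First I would fix a point $(a_0,b_0)$ in the set and consider the two-variable function $F(a,b):=P(-a\tau-b\kappa)$ on the open region $\mathcal{U}=\{(a,b):a,b>0\}$ (near an interior point; the boundary cases $a_0=0$ or $b_0=0$ are handled separately, see below). By Theorem \ref{thm:phase-transition}, whenever $(a,b)\in\mathcal{U}$ the potential $-a\tau-b\kappa$ is a locally H\"older function with finite pressure provided $a+b>\tfrac{1}{2}$; and since $F(a_0,b_0)=0$ while $F$ is strictly decreasing along rays from the origin (by the monotonicity used in the proof of Theorem \ref{thm:phase-transition}), the point $(a_0,b_0)$ automatically satisfies $a_0+b_0>\tfrac{1}{2(a_0+b_0)}\cdot(a_0+b_0)$, i.e. lies in the finite-pressure regime, so $F$ is real analytic in a neighborhood of $(a_0,b_0)$ by the two-variable version of Theorem \ref{thm:analyticity-pressure} (the family $(a,b)\mapsto -a\tau-b\kappa$ is jointly real analytic in the parameters).

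Next I would compute the partial derivative $\partial F/\partial b$ at $(a_0,b_0)$. By the derivative formula in Theorem \ref{thm:analyticity-pressure},
\[
\frac{\partial}{\partial b}P(-a\tau-b\kappa)=-\int_{\TMS_A}\kappa\dd\nu,
\]
where $\nu$ is the Gibbs state for $-a_0\tau-b_0\kappa$. Since $\kappa$ is the geometric potential of a finite-area Fuchsian representation, Proposition \ref{prop:Geo_Poten_II}(4) gives $\kappa(x)\ge 2\ln|x_0|-C_1$, and on $\Sigma_A(\mathrm{I})$ it is bounded below as well, so $\kappa$ is bounded below by a constant and the integral $\int\kappa\dd\nu$ is a well-defined element of $(-\infty,+\infty]$; moreover $\kappa>0$ after replacing it by the cohomologous bounded-away-from-zero representative noted in the Remark following Proposition \ref{prop:Geo_Poten_II}, hence $\int\kappa\dd\nu>0$ and in particular $\partial F/\partial b<0$ (it cannot be $0$, and it cannot be $-\infty$ because $P(-a_0\tau-b_0\kappa)$ is finite and the derivative of a finite, real-analytic pressure function is finite). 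Therefore $\partial F/\partial b\neq 0$ at $(a_0,b_0)$, and the analytic implicit function theorem yields a real analytic function $b=\chi(a)$ defined on a neighborhood of $a_0$ with $F(a,\chi(a))=0$, whose graph coincides with the zero set near $(a_0,b_0)$.

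Finally I would patch the local graphs together. Because $F$ is strictly monotone decreasing in $b$ for fixed $a$ (again by the monotonicity of the pressure in the potential, exactly as used to get uniqueness of $t_{a,b}$ in Theorem \ref{thm:phase-transition}), for each admissible $a>0$ there is at most one $b$ with $F(a,b)=0$; combined with the local analytic graphs this shows the whole zero set is the graph of a single function $a\mapsto\chi(a)$ over an interval of $a$-values, which is real analytic since it is locally real analytic. The boundary endpoints where $a=0$ or $b=0$ correspond to $b_0=\delta_{\rho_2}$ or $a_0=\delta_{\rho_1}$ (the critical exponents), and analyticity up to and including these endpoints follows by the same implicit-function argument applied on a slightly larger open parameter region (the finite-pressure and nonvanishing-derivative conditions persist there, since $\tau,\kappa>0$ makes $\partial F/\partial a,\partial F/\partial b$ both strictly negative throughout). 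The main obstacle I anticipate is the verification that $\partial F/\partial b$ is genuinely finite and nonzero at points of the curve: one must rule out the degenerate possibility $\int\kappa\dd\nu=+\infty$, which is exactly where the careful control of $\kappa$ near the cusps from Proposition \ref{prop:Geo_Poten_II} and Corollary \ref{cor:bdd-diff-loof-fcn} (bounding $\kappa$ by $2\ln|x_0|+C$, together with the finiteness of $P(-a_0\tau-b_0\kappa)$ forcing the integral of $\kappa$ against the Gibbs state to converge) does the essential work.
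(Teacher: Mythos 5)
Your proposal is correct and follows essentially the same route as the paper: after confirming via Theorem \ref{thm:phase-transition} that the zero set lies in the finite-pressure (hence analytic) regime, one applies the implicit function theorem using $\partial_{b}P(-a\tau-b\kappa)=-\int\kappa\,\mathrm{d}\nu_{-a_{0}\tau-b_{0}\kappa}<-c<0$, where $c>0$ is a lower bound for the (cohomologously adjusted) potential $\kappa$. Your additional care about the finiteness of $\int\kappa\,\mathrm{d}\nu$ and the patching of local graphs only makes explicit what the paper leaves implicit.
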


\begin{proof}
The proof of \cite[Theorem 3.14]{Kao:2018th} applies to here. In
short, by Theorem \ref{thm:phase-transition}, it makes sense to discuss
solutions to $P(-a\tau-b\kappa)=0$. To see the solution set is a
real analytic curve one only needs to apply the Implicit Function
Theorem, because we know 
\[
\left.\partial_{b}P(-a\tau-b\kappa)\right|_{(a_{0},b_{0})}=-\int\kappa\dd\nu_{-a_{0}\tau-b_{0}\kappa}<-c
\]
where $c>0$ is a lower bound for $\kappa$ and $\nu_{-a_{0}\tau-b_{0}\kappa}$
is the Gibbs measure for $-a_{0}\tau-b_{0}\kappa$.
\end{proof}

\section{Manhattan Curves, Critical Exponents, and Rigidity\label{sec:Manhattan-and-Rigidity}}

In this section, we will prove Theorem \ref{Thm:manhattan-strictly-convex}
and Theorem \ref{thm-entropy-rigidity}. The ideas most follow \cite{Kao:2018th}.
In \cite{Kao:2018th}, the author used results of Paulin, Pollicott
and Schapira \cite{Pollicott:2012ud} to analyze the geometric Gurevich
pressure over the geodesics flow. The general frame work in \cite{Pollicott:2012ud}
includes finite area hyperbolic surfaces. Nevertheless, for the completeness,
we will give outlines of the proofs, and reader can find all details
in \cite{Kao:2018th}.

Following the notations in Section \ref{sec:Phase-Transitions}, let
$\rho_{1},\rho_{2}$ be two type-preserving finite area Fuchsian representations,
$X_{1}=X_{\rho_{1}}$ and $X_{2}=X_{\rho_{2}}$ be the corresponding
hyperbolic surfaces, and $\tau$,$\kappa$ be the corresponding geometric
potentials over the Markov shift $(\TMS,\sigma)=(\TMS_{A},\s_{A})$.\textcolor{red}{{}
}Recall that the Poincaré series $Q_{\rho_{1},\rho_{2}}^{a,b}(s)$
of the weighted Manhattan metric $d_{\rho_{1},\rho_{2}}^{a,b}$ is
defined by 
\[
Q_{\rho_{1},\rho_{2}}^{a,b}(s):=\sum_{\g\in\pi_{1}(S)}\exp(-s\cdot d_{\rho_{1},\rho_{2}}^{a,b}(o,\g o)),
\]
 $\delta_{\rho_{1},\rho_{2}}^{a,b}$ is the critical exponent of $Q_{\rho_{1},\rho_{2}}^{a,b}$,
and the Manhattan curved ${\cal C}(\rho_{1},\rho_{2})$ of $\rho_{1}$
and $\rho_{2}$ is the set $\{(a,b)\in\R_{\geq0}\times\R_{\geq0}\backslash(0,0):\ {\displaystyle \delta_{\rho_{1},\rho_{2}}^{a,b}=1\}}$.
For the brevity, we will drop the subscript $\rho_{1},\rho_{2}$ in
the rest of this subsection. 

The goal of this subsection is to prove the following theorem:

\begin{thm}
\cite[Section 4]{Kao:2018th} \label{prop:pressue_matchs} \label{cor:Bowen's_formula}
Suppose $a,b\geq0$ and $ab\neq0$, then 
\[
P(-\delta_{\rho_{1},\rho_{2}}^{a,b}(a\tau+b\kappa))=0.
\]
In particular, $(a,b)\in{\cal C}(\rho_{1},\rho_{2})$ if and only
if the pair $(a,b)$ satisfies $P(-a\tau-b\kappa)=0$. 
\end{thm}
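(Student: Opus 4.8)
The plan is to relate the Poincaré series $Q^{a,b}(s)$ to the symbolic dynamics on $(\TMS_A,\sigma_A)$ by identifying the weighted Manhattan length $d^{a,b}_{\rho_1,\rho_2}(o,\g o)$ of a conjugacy class with the ergodic sum $S_m(a\tau+b\kappa)(x)$ over the corresponding periodic orbit $x\in\mathrm{Fix}^m(\TMS_A)$. By the correspondence from Proposition on the geometric potential (I), every closed geodesic $\g$ on $X_{\rho_1}$ corresponds to a (unique up to cyclic permutation) periodic point $x\in\mathrm{Fix}^m$ with $l_1[\g]=S_m\tau(x)$, and simultaneously $l_2[\g]=S_m\kappa(x)$ since $\rho_1,\rho_2$ give the \emph{same} Markov shift with roof functions $\tau,\kappa$. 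Hence $d^{a,b}(o,\g o)\asymp a\,l_1[\g]+b\,l_2[\g]=S_m(a\tau+b\kappa)(x)$, up to a bounded error coming from the difference between translation length $l[\g]$ and displacement $d(o,\g o)$ (which is uniformly bounded in terms of how far $o$ sits from the axis — in fact the error is controlled and does not affect the critical exponent).

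First I would set $f:=a\tau+b\kappa$, which is locally Hölder, positive (after the cohomology adjustment making $\tau,\kappa$ bounded below), and to which Theorem on Phase Transition applies: $P(-sf)$ is finite and real analytic for $s>\frac{1}{2(a+b)}$, identically $+\infty$ below, strictly decreasing, with a unique zero $s=t_{a,b}$. Next I would write the Gurevich pressure of $-sf$ via periodic orbits, $P(-sf)=\lim_n\frac1n\log\sum_{x\in\mathrm{Fix}^n}e^{-sS_nf(x)}\mathds1_{[a_0]}(x)$, and compare this sum with the Poincaré series. The standard argument (as in \cite{Kao:2018th}, Section 4, following Paulin--Pollicott--Schapira) is: the series $\sum_{\g}e^{-s d^{a,b}(o,\g o)}$ converges iff $\sum_{n}\sum_{x\in\mathrm{Fix}^n}e^{-sS_nf(x)}$ converges iff $P(-sf)\le 0$ — where the passage from the full group sum to the primitive-conjugacy-class/periodic-orbit sum absorbs the multiplicity of cyclic permutations and the non-primitive powers into factors that are subexponential and hence invisible to the critical exponent. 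Therefore the abscissa of convergence $\delta^{a,b}_{\rho_1,\rho_2}$ of $Q^{a,b}$ equals the unique $s$ with $P(-sf)=0$, i.e. $P(-\delta^{a,b}_{\rho_1,\rho_2}(a\tau+b\kappa))=0$.

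The ``in particular'' statement then follows by a scaling observation: for $c>0$, $P(-c(a\tau+b\kappa))=P(-(ca)\tau-(cb)\kappa)$, and $\delta^{a,b}=1$ exactly means $s=1$ solves $P(-s(a\tau+b\kappa))=0$, which is the same as $P(-a\tau-b\kappa)=0$. Combined with Corollary \ref{cor:sol_analytic}, this realizes ${\cal C}(\rho_1,\rho_2)$ as the real-analytic zero set of $(a,b)\mapsto P(-a\tau-b\kappa)$, which is exactly the input needed for Theorem \ref{Thm:manhattan-strictly-convex}.

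The main obstacle, and the step requiring genuine care rather than routine bookkeeping, is the comparison between the group-theoretic Poincaré series indexed by $\g\in\pi_1(S)$ (with displacement $d(o,\g o)$) and the dynamical zeta-type sum indexed by periodic orbits of $\sigma_A$ (with ergodic sums of the roof function). There are three sources of slack to handle uniformly: (i) the discrepancy $d(o,\g o)-l[\g]$, bounded because $o$ and $\g o$ both lie in a fixed fundamental domain; (ii) the section $S_A\subset(\partial D_0)_{\mathrm{in}}$ is a \emph{smaller} section, so one must check that passing to non-escaping geodesics and to the induced map $\sigma_A$ does not change the critical exponent — this is where the parabolic/cusp excursions enter, but Proposition \ref{prop:Geo_Poten_II}(\ref{Prop:Geo_Poten_II_tau_bdd}), giving $\tau(x)=2\log|x_0|+O(1)$ on $\Sigma_A(\mathrm{II})$, is precisely the estimate that keeps the cusp contributions under control and matches the known growth $\delta>\tfrac12$; and (iii) counting primitive vs.\ imprimitive classes and cyclic multiplicity, which contributes only polynomial factors. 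Since the general framework of \cite{Pollicott:2012ud} already covers finite-area hyperbolic surfaces, I would cite it for (i) and (iii) and give the cusp estimate (ii) explicitly via the phase-transition threshold, exactly paralleling \cite[Section 4]{Kao:2018th}.
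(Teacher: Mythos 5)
Your proposal is correct and follows essentially the same route as the paper: both identify $\delta^{a,b}_{\rho_1,\rho_2}$ with the unique zero of $s\mapsto P(-s(a\tau+b\kappa))$ by passing from the group Poincar\'e series to periodic-orbit sums over $(\TMS_A,\sigma_A)$, deferring the delicate counting comparisons (displacement vs.\ translation length, primitive/imprimitive multiplicities, cusp excursions) to \cite[Section 4]{Kao:2018th} and \cite{Pollicott:2012ud}, and then deducing the ``in particular'' clause by the scaling $P(-c(a\tau+b\kappa))=P(-(ca)\tau-(cb)\kappa)$. The only difference is presentational: the paper's outline explicitly threads through the intermediate quantities $P^{a,b}_{\rm Geo}$, $\overline{\delta}^{a,b}$, and the suspension-flow pressure $P_\phi(-a-b\psi)$, which you compress into a direct comparison of the two series.
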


\begin{proof}
As we mentioned before results in \cite[Section 4]{Kao:2018th} are
applicable here. Here we give a brief outline of the proof. We consider
following growth rates and their relations:

\begin{itemize}[font=\normalfont]

\item the geometric Gurevich pressure $P_{{\rm Geo}}^{a,b}$ given
by growth rates of closed orbits on $T^{1}X_{1}$: 
\[
P_{{\rm Geo}}^{a,b}:={\displaystyle \limsup_{s\to\infty}\frac{1}{s}\log Z_{W}(s)}
\]
where $Z_{W}(s):={\displaystyle \sum_{\stackrel[\lambda\in{\rm Per}_{1}(s)]{\lambda\cap W\neq\phi}{}}e^{-al_{1}[\lambda]-bl_{2}[\lambda]}}$
where $W\subset T^{1}X_{1}$ is a relatively compact open set and
${\rm Per}_{1}(s):=\{\lambda:\ \mbox{\ensuremath{\lambda\ }is a closed orbit on }T^{1}X_{1}{\rm \ and\ }l_{1}[\lambda]\leq s\}$.

\item the critical exponent $\overline{\delta}^{a,b}$ proposed in
\cite{Pollicott:2012ud}: $\overline{\delta}^{a,b}$ is the critical
exponent of $Q_{{\rm PPS},x,y}^{a,b}(s):={\displaystyle \sum_{\g\in\pi_{1}(S)}e^{-d^{a,b}(x,\g y)-sd(x,\rho_{1}(\g)y)}}$
the \textit{Paulin-Pollicott-Schapira's (PPS) Poincaré series.}

\item Let $\psi(x,t):=\frac{\kappa(x)}{\tau(x)}:\TMS_{\tau}\to\R$
for $t\in[0,\tau(x)]$. Then \cite[Lemma 4.7]{Kao:2018th} showed
that $P_{\phi}(-a-b\psi)=0\iff P(-a\tau-b\kappa)=0$. 

\item \cite[Lemma 4.3, 4.4]{Kao:2018th} showed that $P_{{\rm Geo}}^{a,b}=\overline{\delta}^{a,b}=P_{\phi}(-a-b\psi).$

\item \cite[Lemma  4.5]{Kao:2018th} pointed out that $\overline{\delta}^{a,b}=0\iff\delta^{a,b}=1$.

\end{itemize}

In sum, we have 
\begin{alignat*}{1}
\delta^{a,b}=1 & \iff\overline{\delta}^{a,b}=0\\
 & \iff P_{{\rm Geo}}^{a,b}=0\\
 & \iff P_{\phi}(-a-b\psi)=0\\
 & \iff P(-a\tau-b\kappa)=0.
\end{alignat*}
Thus, $P(-t_{a,b}(a\tau+b\kappa))=0\iff\delta^{t_{a,b}a,t_{a,b}b}=1$
, i.e. $Q_{{\rm PPS},o,o}^{t_{a,b}a,t_{a,b}b}(s)={\displaystyle \sum_{\g\in\pi_{1}(S)}e^{-t_{ab}d^{a,b}(o,\g o)}}$
has critical exponent 1. Hence, $Q_{{\rm PPS},o,o}^{t_{a,b}a,t_{a,b}b}(s)={\displaystyle \sum_{\gamma\in\pi_{1}(S)}e^{-sd^{a,b}(o,\g o)}}$
has critical exponent $t_{a,b}$, and thus, $\delta^{a,b}=t_{a,b}$.
\end{proof}

By Corollary \ref{cor:sol_analytic} and the above theorem, we have: 

\begin{cor}
\label{cor:The-Manhattan-curve-pressure}The Manhattan curve ${\cal C}(\rho_{1},\rho_{2})$
is a real analytic curve give by, for $a,b\geq0$ and $a+b\neq0$,
\[
{\cal C}(\rho_{1},\rho_{2})=\{(a,b):\ P(-a\tau-b\kappa)=0\}.
\]
\end{cor}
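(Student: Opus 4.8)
The plan is to read the statement off from Corollary \ref{cor:sol_analytic} combined with Theorem \ref{prop:pressue_matchs}, once the two degenerate weights $a=0$ and $b=0$ have been disposed of. Indeed, for $a,b>0$ (equivalently $ab\neq0$) Theorem \ref{prop:pressue_matchs} already asserts the chain of equivalences $(a,b)\in{\cal C}(\rho_{1},\rho_{2})\iff\delta_{\rho_{1},\rho_{2}}^{a,b}=1\iff P(-a\tau-b\kappa)=0$, so the two sets coincide on the open quadrant $\{a,b>0\}$. It therefore remains only to check that they also have the same trace on the two coordinate half-axes.

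For $a=0$ and $b>0$, one has $d_{\rho_{1},\rho_{2}}^{0,b}(o,\g o)=b\cdot d(o_{2},\rho_{2}(\g)o_{2})$, so $Q_{\rho_{1},\rho_{2}}^{0,b}(s)$ is, after the substitution $s\mapsto s/b$, the classical Poincar\'e series of $\rho_{2}$; hence $\delta_{\rho_{1},\rho_{2}}^{0,b}=\delta_{\rho_{2}}/b$. Since $X_{\rho_{2}}$ is a finite area hyperbolic surface, $\delta_{\rho_{2}}=1$ (Otal--Peign\'e \cite{Otal:2004fn}), so $\delta_{\rho_{1},\rho_{2}}^{0,b}=1$ exactly when $b=1$. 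On the pressure side, $P(-a\tau-b\kappa)=P(-b\kappa)$ when $a=0$; by Proposition \ref{prop:Geo_Poten_II} we have $P(-\kappa)=0$, and by Theorem \ref{thm:phase-transition} (with $a=0$) the map $t\mapsto P(-tb\kappa)$ is real analytic on $(\tfrac{1}{2b},\infty)$ and has a unique zero, which must be $t=1/b$; so $P(-b\kappa)=0$ precisely when $b=1$. Thus the characterisation $(a,b)\in{\cal C}(\rho_{1},\rho_{2})\iff P(-a\tau-b\kappa)=0$ persists at $(0,1)$, and no other point of the half-axis $\{a=0\}$ lies on either set. The case $b=0$ is symmetric, using $\delta_{\rho_{1}}=1$ and $P(-\tau)=0$, and yields the point $(1,0)$.

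Putting the three ranges together, ${\cal C}(\rho_{1},\rho_{2})$ coincides with the full set $\{(a,b):a,b\geq0,\ a+b\neq0,\ P(-a\tau-b\kappa)=0\}$, and Corollary \ref{cor:sol_analytic} says precisely that this set is a real analytic curve, which is the claim. The corollary is essentially a bookkeeping statement, so there is no genuine obstacle; the only point requiring a little care is the passage to the degenerate weights, where Theorem \ref{prop:pressue_matchs} does not apply verbatim and one must instead invoke $P(-\tau)=P(-\kappa)=0$ together with the value $1$ of the critical exponent of a hyperbolic lattice.
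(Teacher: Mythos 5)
Your proof is correct and takes essentially the same route as the paper, which obtains the corollary by simply combining Theorem \ref{prop:pressue_matchs} with Corollary \ref{cor:sol_analytic}. Your explicit verification at the axis points $(0,1)$ and $(1,0)$ --- which Theorem \ref{prop:pressue_matchs}, stated only for $ab\neq 0$, does not literally cover --- is a careful extra step the paper glosses over; it amounts to $P(-\tau)=P(-\kappa)=0$ and $\delta_{\rho_1}=\delta_{\rho_2}=1$, i.e.\ the content of Corollary \ref{cor:Bowen-formula-tau}.
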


The following theorem is Bowen's formula which characterize the topological
entropy of the geodesic flow by the pressure and the geometric potential. 

\begin{cor}
\label{cor:Bowen-formula-tau}Suppose $\rho_{1}$ is a finite volume
Fuchsian representation $\rho_{1}$. Then
\[
P(-1\cdot\tau)=0
\]
 where $1$ is the critical exponent of $\rho_{1}(\pi_{1}(S))$.
\end{cor}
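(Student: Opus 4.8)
The plan is to combine two facts already available: Proposition \ref{prop:Geo_Poten_II}(3), which asserts $P(-\tau)=0$, and the classical computation of the critical exponent of a lattice. For $\Gamma=\rho_{1}(\pi_{1}S)$ a finite covolume Fuchsian group, the Poincar\'e series $\sum_{\gamma\in\Gamma}e^{-s\,d(o,\gamma o)}$ diverges at $s=1$ and converges for $s>1$, so $\delta_{\rho_{1}}=1$; equivalently, by Otal and Peign\'e \cite{Otal:2004fn}, $\delta_{\rho_{1}}$ equals the topological entropy of the geodesic flow on $T^{1}X_{\rho_{1}}$, which is $1$ for a finite-area hyperbolic surface. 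This is precisely the number denoted ``$1$'' in the statement, and since $P(-1\cdot\tau)=P(-\tau)=0$, the corollary follows. Replacing $\tau$ by the cohomologous, bounded-away-from-zero representative $\tau'$ does not affect the Gurevich pressure, by Theorem \ref{thm:props-of-Gibbs-measures}, so there is no ambiguity in $P(-\tau)$.

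I would also record, as a check that is internal to the symbolic model, the derivation from Theorem \ref{prop:pressue_matchs}. Taking $\rho_{2}=\rho_{1}$ makes $\kappa=\tau$, and for any $a,b>0$ the weighted Manhattan metric is $d_{\rho_{1},\rho_{1}}^{a,b}(o,\gamma o)=(a+b)\,d(o,\rho_{1}(\gamma)o)$, so $\delta_{\rho_{1},\rho_{1}}^{a,b}=\delta_{\rho_{1}}/(a+b)$. Theorem \ref{prop:pressue_matchs} then yields $0=P\big(-\delta^{a,b}(a\tau+b\tau)\big)=P(-\delta_{\rho_{1}}\tau)$. Because $t\mapsto P(-t\tau)$ is strictly decreasing on the interval where it is finite (a consequence of Theorem \ref{thm:phase-transition} together with the strict positivity of $\tau'$), its zero is unique; since it vanishes at $t=1$ by Proposition \ref{prop:Geo_Poten_II}(3), we again obtain $\delta_{\rho_{1}}=1$ and hence $P(-1\cdot\tau)=0$.

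The result is essentially a bookkeeping corollary, so there is no serious obstacle; the one point requiring care is the normalization, namely confirming that the ``$1$'' which is the critical exponent of $\rho_{1}(\pi_{1}S)$ coincides with the ``$1$'' at which the pressure function $t\mapsto P(-t\tau)$ vanishes. Either route above pins this down, and all the genuine work --- the Ledrappier--Sarig coding, the location of the phase transition, and the identity $P(-\tau)=0$ --- was already carried out in Section \ref{sec:Geodesic-Flows-Symbolic} and Theorem \ref{thm:phase-transition}.
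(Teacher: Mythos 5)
Your first paragraph is exactly the paper's proof: the paper cites Proposition \ref{prop:Geo_Poten_II} for $P(-\tau)=0$ and Otal--Peign\'e for the fact that the critical exponent of a finite-area Fuchsian group is $1$. The additional consistency check via Theorem \ref{prop:pressue_matchs} with $\rho_{2}=\rho_{1}$ is correct but not needed; the proposal is fine as it stands.
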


\begin{proof}
It follows Proposition \ref{prop:Geo_Poten_II} and the fact that
when $\rho_{1}$ is a finite area Fuchsian representation then the
critical exponent of $\rho_{1}(\pi_{1}(S))$ is 1 (cf. \cite{Otal:2004fn}). 
\end{proof}

Notice that by Bowen's formula and the Implicit Function Theorem,
we can prove that the pressure varies analytically when $\tau$ varies
analytically with $P(-\tau)=0$. 

Now we are ready to prove Theorem \ref{Thm:manhattan-strictly-convex}.

\begin{thm}
[Theorem \ref{Thm:manhattan-strictly-convex}]\label{thm:man-strctily-conv}
The ${\cal C}(\rho_{1},\rho_{2})$ is a convex real analytic curve.
Moreover, ${\cal C}(\rho_{1},\rho_{2})$ is strictly convex unless
$\rho_{1}$ and $\rho_{2}$ are conjugate in ${\rm PSL}(2,\R)$, in
such cases ${\cal C}(\rho_{1},\rho_{2})$ is a straight line.
\end{thm}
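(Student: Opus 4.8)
The plan is to realize the Manhattan curve as the zero set of the real-analytic function $F(a,b) := P(-a\tau - b\kappa)$ and to read off convexity from the concavity properties of pressure. By Corollary \ref{cor:The-Manhattan-curve-pressure} we already know ${\cal C}(\rho_1,\rho_2) = \{(a,b) : a,b\geq 0, a+b\neq 0, F(a,b) = 0\}$ and that this is a real-analytic curve, so what remains is (i) convexity and (ii) the rigidity dichotomy. First I would record the first-order information: by Theorem \ref{thm:analyticity-pressure}, $\partial_a F = -\int \tau \dd\nu_{-a\tau-b\kappa} < 0$ and $\partial_b F = -\int\kappa\dd\nu_{-a\tau-b\kappa} < 0$ everywhere on the relevant region (both $\tau,\kappa$ bounded below by positive constants after the cohomology adjustment), so along ${\cal C}(\rho_1,\rho_2)$ we may write $b = \chi(a)$ with $\chi'(a) = -\partial_a F/\partial_b F < 0$; the curve is a decreasing graph.

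Next I would establish that $F$ is concave on the positive quadrant. This is the standard fact that $(a,b)\mapsto P(-a\tau-b\kappa)$ is convex in the wrong sign — more precisely, pressure is a convex function of the potential, so $F$ is convex; wait, one must be careful about signs. The clean way: for a locally H\"older potential, $t\mapsto P(f + tg)$ has second derivative ${\rm Var}(g,\nu_f)\geq 0$ (Corollary \ref{cor:derivative-pressure}), so $P$ is convex along affine families of potentials. Since $(a,b)\mapsto -a\tau-b\kappa$ is affine, $F$ is a convex function of $(a,b)$. A convex function whose zero set is a decreasing graph $b=\chi(a)$ forces $\chi$ to be convex: restricting $F$ to the segment joining two points of the curve gives a convex function that is $\leq 0$ in between (since $F$ decreases in each variable and the curve bounds the superlevel region $\{F\geq 0\}$ from above), hence the chord of the graph lies above the graph. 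I would spell out this convex-analysis step carefully since the orientation of inequalities is the place one can slip.

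For the strict-convexity dichotomy, the key is the equality case of the variance bound. Strict convexity of $\chi$ fails at some point $(a_0,b_0)\in{\cal C}(\rho_1,\rho_2)$ precisely when $F$ is affine along the tangent direction there, which by the Hessian computation means ${\rm Var}(\tau - \lambda\kappa,\nu_{-a_0\tau-b_0\kappa}) = 0$ for the appropriate slope $\lambda = \int\tau\dd\nu/\int\kappa\dd\nu$, i.e. (by the last sentence of Corollary \ref{cor:derivative-pressure}) $\tau - \lambda\kappa$ is cohomologous to a constant. By Liv\v{s}ic (Theorem \ref{thm:props-of-Gibbs-measures}) this says $S_m\tau(x) = \lambda\, S_m\kappa(x) + mR$ for all periodic $x$, equivalently $l_1[\g] = \lambda\, l_2[\g] + R\cdot(\text{length})$ for all closed geodesics $\g$. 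Using that both $\tau$ and $\kappa$ are genuine geometric potentials with $P(-\tau) = P(-\kappa) = 0$ (Corollary \ref{cor:Bowen-formula-tau}), the constant $R$ must vanish, giving a genuine proportionality $l_1[\g] = \lambda\, l_2[\g]$ of marked length spectra. Then Marked Length Spectrum Rigidity (Theorem \ref{thm:proportinal marked length spectrum}), applicable since finite-area Fuchsian representations are Zariski dense, forces $\rho_1$ and $\rho_2$ to be conjugate in $\mathrm{PSL}(2,\R)$; and in that case $\kappa = \tau$ up to cohomology so $F(a,b) = P(-(a+b)\tau)$ depends only on $a+b$, whence ${\cal C}(\rho_1,\rho_2)$ is the straight line $a+b = 1$.

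The main obstacle I anticipate is not any single hard estimate but pinning down the cohomology-to-geometry translation in the equality case: one must check that "cohomologous to a constant on the symbolic space $(\TMS_A,\sigma_A)$" really does transfer to "proportional marked length spectra on the surface" including the vanishing of the additive constant, and that the hypothesis "sufficiently many yet finite $\g$" in Theorem \ref{thm:proportinal marked length spectrum} is met — here it is automatic because we get the relation for \emph{all} closed geodesics via the bijection between periodic orbits of $\sigma_A$ and conjugacy classes of hyperbolic elements (Proposition on Geometric Potential (I)). A secondary care point is ensuring the variance vanishing is with respect to the correct equilibrium measure and that $\tau-\lambda\kappa$ being locally H\"older (a difference of locally H\"older functions, with $\lambda>0$) is in the class where Theorem \ref{thm:props-of-Gibbs-measures} applies; both hold by Proposition \ref{prop:Geo_Poten_II} and Corollary \ref{cor:bdd-diff-loof-fcn}.
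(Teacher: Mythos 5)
Your proposal is correct in outline but takes a genuinely different route from the paper's in the rigidity direction. The paper argues through the \emph{first} derivative: a straight line must have slope $-1$ (both $(1,0)$ and $(0,1)$ lie on ${\cal C}(\rho_1,\rho_2)$ because finite-area Fuchsian groups have critical exponent $1$), which forces $\int\kappa\dd m_{-\tau}=\int\tau\dd m_{-\tau}$; this makes $m_{-\tau}$ an equilibrium state for $-\kappa$, so $m_{-\tau}=m_{-\kappa}$ by uniqueness of equilibrium states, and Theorem \ref{thm:props-of-Gibbs-measures} then gives $\tau\sim\kappa$ up to the constant $P(-\tau)-P(-\kappa)=0$, whence equal marked length spectra and conjugacy. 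You instead go through the \emph{second} derivative, i.e.\ the equality case of ${\rm Var}\geq 0$ in Corollary \ref{cor:derivative-pressure}. Both routes end at Liv\v{s}ic plus marked length spectrum rigidity; yours is the standard ``vanishing variance'' argument and generalizes more readily, while the paper's sidesteps the Hessian computation entirely. For convexity the two arguments are morally identical (H\"older's inequality is exactly how one proves convexity of pressure in the potential), though your convex-analysis deduction of convexity of the graph $b=\chi(a)$ from convexity and monotonicity of $F$ is more explicit than the paper's one line.

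Two soft spots need shoring up, both with the same fix. First, from $\tau\sim\lambda\kappa+R$ and Theorem \ref{thm:props-of-Gibbs-measures} you only get $R=P(-\tau)-P(-\lambda\kappa)=-P(-\lambda\kappa)$, which vanishes iff $\lambda=1$; the bare fact $P(-\tau)=P(-\kappa)=0$ does not kill $R$ for general $\lambda$. You must first note that the straight line passes through $(1,0)$ and $(0,1)$ and hence has slope $-1$, so $\lambda=\int\tau\dd\nu/\int\kappa\dd\nu=1$ at every point of the curve; then $R=0$ and the marked length spectra are equal, not merely proportional (which also disposes of the additive term $mR$, $m$ being the symbolic period rather than a geometric quantity). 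Second, Corollary \ref{cor:derivative-pressure} is stated for \emph{bounded} perturbations: in the direction $(1,\chi')$ the perturbation is $-(\tau-\lambda\kappa)$, which is bounded only when $\lambda=1$ (Corollary \ref{cor:bdd-diff-loof-fcn}, since $\kappa$ itself is unbounded). So you need $\lambda=1$ before invoking the variance formula, and for the general convexity of $F$ you should appeal to convexity of pressure via H\"older or the variational principle rather than to the second-order expansion in unbounded directions. Both fixes are immediate, so the proof goes through.
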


\begin{proof}
The analyticity of ${\cal C}$ is proved in Corollary \ref{cor:The-Manhattan-curve-pressure}.
To show the remaining parts, we first notice that by H\"older's inequality
the Manhattan curve ${\cal C}$ is always convex, and because ${\cal C}$
is real analytic we know ${\cal C}$ is either a straight line or
strictly convex. It is clear that if $\rho_{1}$ and $\rho_{2}$ are
conjugate then ${\cal C}$ is a straight line. We claim that if ${\cal C}$
is a straight line then $\rho_{1}$ and $\rho_{2}$ are conjugate
in ${\rm PSL}(2,\R)$.

To see this, suppose ${\cal C}$ is a straight line. Then the slope
of this line is $-1$ because $(1,0)$,$(0,1)\in{\cal C}$. In other
words, we have 
\begin{equation}
-1=-\frac{\int\tau\dd m_{-\tau}}{\int\kappa\dd m_{-\tau}}=-\frac{\int\tau\dd m_{-\kappa}}{\int\kappa\dd m_{-\kappa}}\label{eq:rigidity}
\end{equation}
 where $m_{-\tau}$, $m_{-\kappa}$ are the equilibrium states for
$-\tau$ and $-\kappa$, respectively. 

It is sufficient to show that $\tau$ and $\kappa$ are cohomologus,
because $\tau\sim\kappa$ implies that $X_{1}$ and $X_{2}$ has the
same marked length spectrum, and which implies that $\rho_{1}$ and
$\rho_{2}$ are conjugate in $\psl(2,\R)$ (cf. Theorem \ref{thm:proportinal marked length spectrum}). 

To see $\tau$ and $\kappa$ are cohomologus, by Theorem \ref{thm:props-of-Gibbs-measures},
it is enough to show $m_{-\tau}=m_{-\kappa}$. Notice that $m_{-\tau}$,
$m_{-\kappa}$ are the equilibrium state of $-\tau$, $-\kappa$,
respectively, we have 
\[
h_{\s}(m_{-\tau})-\int\tau\dd m_{-\tau}=P(-\tau)=0=P(-\kappa)=h_{\s}(m_{-\kappa})-\int\kappa\dd m_{-\kappa}.
\]
 Moreover, by equation $($\ref{eq:rigidity}$)$, we know $\int\kappa\dd m_{-\tau}=\int\tau\dd m_{-\tau}$.
Thus, we get 
\[
h_{\s}(m_{-\tau})+\int(-\kappa)\dd m_{-\tau}=0=P(-\kappa).
\]
In other words, $m_{-\tau}$ is a equilibrium state for $-\kappa$,
and by the uniqueness of equilibrium states, we get $m_{-\kappa}=m_{-\tau}$. 
\end{proof}

Using the strictly convexity of the Manhattan curve, we have the following
rigidity results. 

\begin{thm}
[Bishop-Steiger Rigidity; Theorem \ref{thm-entropy-rigidity}] Suppose
$\rho_{1},\rho_{2}$ is two type-preserving finite volume Fuchsian
representations. Then $h_{{\rm BS}}(\rho_{1},\rho_{2})\leq\frac{1}{2}$.
Moreover, the equality holds if and only if $\rho_{1}$ and $\rho_{2}$
are conjugate in $\psltr$.
\end{thm}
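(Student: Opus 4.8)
The plan is to translate the Bishop–Steiger entropy $h_{\mathrm{BS}}(\rho_1,\rho_2)$ into a statement about the Manhattan curve ${\cal C}(\rho_1,\rho_2)$, and then apply the strict convexity established in Theorem~\ref{thm:man-strctily-conv}. First I would observe that $h_{\mathrm{BS}}(\rho_1,\rho_2)$ is, by definition, the critical exponent of the Poincar\'e series $Q^{1,1}_{\rho_1,\rho_2}(s)=\sum_{\g\in\pi_1(S)}e^{-s\,d^{1,1}_{\rho_1,\rho_2}(o,\g o)}$ associated to the diagonal weighted Manhattan metric with $a=b=1$; indeed the counting function $\#\{[\g]:d_1+d_2\le T\}$ grows like $e^{h_{\mathrm{BS}}T}$ exactly when $\delta^{1,1}_{\rho_1,\rho_2}=h_{\mathrm{BS}}$, modulo the standard (and here harmless) passage between conjugacy classes and group elements, which works because of the Fenchel–Nielsen/bilipschitz control and the exponential estimates already available. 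Hence $h_{\mathrm{BS}}(\rho_1,\rho_2)=\delta^{1,1}_{\rho_1,\rho_2}$.

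Next I would invoke Theorem~\ref{prop:pressue_matchs}, which gives $P\bigl(-\delta^{a,b}_{\rho_1,\rho_2}(a\tau+b\kappa)\bigr)=0$ for all admissible weights; with $a=b=1$ this reads $P\bigl(-h_{\mathrm{BS}}(\tau+\kappa)\bigr)=0$. On the other hand, from Corollary~\ref{cor:The-Manhattan-curve-pressure} the point $(a,b)=(h_{\mathrm{BS}},h_{\mathrm{BS}})$ lies on ${\cal C}(\rho_1,\rho_2)$, i.e.\ it is the intersection of the curve with the diagonal $\{a=b\}$. Since $(1,0)$ and $(0,1)$ both lie on ${\cal C}(\rho_1,\rho_2)$ (each reduces to Bowen's formula $P(-\tau)=0$, resp.\ $P(-\kappa)=0$, of Corollary~\ref{cor:Bowen-formula-tau}), convexity of the curve forces the diagonal intersection point to satisfy $a=b\le \tfrac12$ — geometrically, a convex curve through $(1,0)$ and $(0,1)$ meets the line $a=b$ at or below the midpoint $(\tfrac12,\tfrac12)$ of the segment joining those two points. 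This yields $h_{\mathrm{BS}}(\rho_1,\rho_2)\le\tfrac12$.

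For the equality case, suppose $h_{\mathrm{BS}}(\rho_1,\rho_2)=\tfrac12$, so $(\tfrac12,\tfrac12)\in{\cal C}(\rho_1,\rho_2)$. Then the three collinear points $(1,0)$, $(\tfrac12,\tfrac12)$, $(0,1)$ all lie on the convex analytic curve ${\cal C}(\rho_1,\rho_2)$; by Theorem~\ref{thm:man-strctily-conv} a convex real-analytic curve containing three collinear points is a straight line, hence the whole of ${\cal C}(\rho_1,\rho_2)$ is the segment of slope $-1$ through $(1,0)$ and $(0,1)$. But Theorem~\ref{thm:man-strctily-conv} asserts that ${\cal C}(\rho_1,\rho_2)$ is a straight line precisely when $\rho_1$ and $\rho_2$ are conjugate in $\mathrm{PSL}(2,\R)$. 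Conversely, if $\rho_1\sim\rho_2$ then $\tau\sim\kappa$, the curve is the line $a+b=1$, the diagonal point is $(\tfrac12,\tfrac12)$, and $h_{\mathrm{BS}}=\tfrac12$.

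I expect the main obstacle to be the first step: carefully justifying that the Bishop–Steiger exponential growth rate of the \emph{orbit-counting} function equals the critical exponent $\delta^{1,1}$ of the Poincar\'e series in the presence of parabolics. In the convex cocompact case this is classical, but with cusps one must ensure the count of lattice points in the product metric is not distorted — this is exactly where the bounded-difference estimate of Corollary~\ref{cor:bdd-diff-loof-fcn} and Proposition~\ref{prop:Geo_Poten_II}(\ref{Prop:Geo_Poten_II_tau_bdd}) do the work, letting us compare $d^{1,1}_{\rho_1,\rho_2}$ to $2d_{\rho_1}$ up to a controlled error and thereby reduce the counting asymptotics to the symbolic model. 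Once that identification is in place, everything else is an immediate consequence of the convexity/analyticity dichotomy for ${\cal C}(\rho_1,\rho_2)$ already proved.
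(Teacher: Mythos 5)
Your proposal is correct and follows essentially the same route as the paper: identify $h_{\rm BS}$ with the critical exponent $\delta^{1,1}$ (which the paper likewise defers to a "standard" counting argument, citing \cite[Theorem 4.8]{Kao:2018th}), place $(h_{\rm BS},h_{\rm BS})$, $(1,0)$, $(0,1)$ on the Manhattan curve, and conclude by the convexity/straight-line dichotomy of Theorem \ref{thm:man-strctily-conv}. Your treatment of the equality case is in fact slightly more explicit than the paper's.
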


\begin{proof}
We first notice that it is a standard and well-known procedure (cf.
\cite[Theorem 4.8]{Kao:2018th}) to show
\[
\delta^{1,1}=h_{{\rm BS}}=\lim_{T\to\infty}\frac{1}{T}\ln\left(\#\{[\g]\in[\pi_{1}(S)]:d(o,\rho_{1}(o,\g o)+d(o,\rho_{2}(o,\g o)\leq T\}\right).
\]
Moreover, since $(\frac{1}{2},\frac{1}{2})$ is the middle point of
$(0,1),(1,0)\in{\cal C}(\rho_{1},\rho_{2})$, by Theorem \ref{thm:man-strctily-conv},
we know $(\frac{1}{2},\frac{1}{2})$ is above $\delta^{1,1}\cdot(1,1)\in{\cal C}(\rho_{1},\rho_{2})$
and $\delta^{1,1}=\frac{1}{2}$ if and only if ${\cal C}$ is a straight
line. 
\end{proof}

\begin{thm}
[Thurston's Rigidity; Theorem \ref{thm-entropy-rigidity}] Suppose
$\rho_{1},\rho_{2}$ is two type-preserving finite volume Fuchsian
representations. Then 
\[
{\rm I}(\rho_{1},\rho_{2})\geq1
\]
 and equals to 1 if and only if $\rho_{1}$ and $\rho_{2}$ are conjugate
in $\psltr$.
\end{thm}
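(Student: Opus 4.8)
The strategy is to import the intersection number into the picture of the Manhattan curve. By Proposition~\ref{prop:intersection-dynamics}, we have the dynamical formula
\[
{\rm I}(\rho_1,\rho_2)=\frac{\int\kappa\dd m_{-\tau}}{\int\tau\dd m_{-\tau}},
\]
so the inequality ${\rm I}(\rho_1,\rho_2)\geq 1$ becomes $\int\kappa\dd m_{-\tau}\geq\int\tau\dd m_{-\tau}$, with equality exactly when these two integrals agree. The first step is to recognize this ratio geometrically in terms of ${\cal C}(\rho_1,\rho_2)=\{(a,b):P(-a\tau-b\kappa)=0\}$: differentiating the relation $P(-a\tau-b\kappa)=0$ implicitly and using Theorem~\ref{thm:analyticity-pressure}, the slope of ${\cal C}$ at the point $(1,0)$ (which lies on ${\cal C}$ by Bowen's formula, Corollary~\ref{cor:Bowen-formula-tau}) is
\[
\left.\frac{\dd b}{\dd a}\right|_{(1,0)}=-\frac{\partial_a P}{\partial_b P}\Big|_{(1,0)}=-\frac{\int\tau\dd m_{-\tau}}{\int\kappa\dd m_{-\tau}}=-\frac{1}{{\rm I}(\rho_1,\rho_2)}.
\]

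**Main argument via convexity.** Now I invoke Theorem~\ref{thm:man-strctily-conv}: ${\cal C}(\rho_1,\rho_2)$ is a convex curve passing through $(1,0)$ and $(0,1)$. A convex curve through these two points has slope at $(1,0)$ no larger (i.e. no less steep) than the slope $-1$ of the chord joining $(1,0)$ to $(0,1)$; more precisely, convexity forces $\left.\dd b/\dd a\right|_{(1,0)}\leq -1$. Combining this with the computation above gives $-1/{\rm I}(\rho_1,\rho_2)\leq -1$, hence ${\rm I}(\rho_1,\rho_2)\geq 1$. For the equality case: if ${\rm I}(\rho_1,\rho_2)=1$, then the slope of ${\cal C}$ at $(1,0)$ is exactly $-1$, the slope of the chord; since ${\cal C}$ is real analytic, a convex analytic curve whose tangent at an endpoint coincides with the chord through its endpoints must be that straight chord itself (otherwise strict convexity between the endpoints, established in Theorem~\ref{thm:man-strctily-conv}, would push the curve strictly to one side of the tangent line, contradicting that it also passes through $(0,1)$ on that tangent line). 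So ${\cal C}(\rho_1,\rho_2)$ is the straight segment, and by Theorem~\ref{thm:man-strctily-conv} this happens if and only if $\rho_1$ and $\rho_2$ are conjugate in $\psltr$. Conversely, if $\rho_1,\rho_2$ are conjugate then $\tau$ and $\kappa$ are cohomologous, so $\int\kappa\dd m_{-\tau}=\int\tau\dd m_{-\tau}$ and ${\rm I}(\rho_1,\rho_2)=1$.

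**Main obstacle.** The delicate point is not the inequality itself — that is a one-line consequence of convexity once the slope is identified — but rather ensuring the endpoint slope computation is legitimate: I need ${\cal C}$ to be differentiable up to and at the point $(1,0)$, and I need $\partial_b P(-a\tau-b\kappa)|_{(1,0)}=-\int\kappa\dd m_{-\tau}$ to be finite and nonzero. Finiteness and nonvanishing of $\int\kappa\dd m_{-\tau}$ follow because (after replacing $\kappa$ by a cohomologous bounded-below potential as in the remark after Proposition~\ref{prop:Geo_Poten_II}) $\kappa\geq c>0$, and $\int\kappa\dd m_{-\tau}<\infty$ since $m_{-\tau}$ integrates $\tau$ and $|\tau-\kappa|$ is bounded by Corollary~\ref{cor:bdd-diff-loof-fcn}; the analyticity of the pressure near the relevant parameters (Theorem~\ref{thm:phase-transition}, since the point $(1,0)$ satisfies $1>\tfrac{1}{2(1+0)}=\tfrac12$) then licenses the implicit differentiation. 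Alternatively, one can bypass the endpoint subtlety entirely by working at the symmetric point: $(\tfrac12,\tfrac12)$ lies on or above the line $b=1-a$ by strict convexity, which via $\delta^{1,1}\leq\tfrac12$ and a Hölder/Cauchy–Schwarz argument on the Poincaré series recovers the same conclusion — but the slope-at-endpoint route is cleaner and ties directly to the already-established shape of ${\cal C}$.
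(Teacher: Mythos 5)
Your overall route --- reading off ${\rm I}(\rho_1,\rho_2)$ from the slope of the Manhattan curve at $(1,0)$ and then invoking convexity --- is genuinely different from the paper's argument, which never touches the curve: the paper applies the variational principle directly, using $m_{-\tau}$ as a test measure for $P(-\kappa)=0$ to get $h_{\sigma}(m_{-\tau})-\int\kappa\dd m_{-\tau}\leq 0$, and combines this with $h_{\sigma}(m_{-\tau})=\int\tau\dd m_{-\tau}$ (from $P(-\tau)=0$) to conclude $\int\tau\dd m_{-\tau}\leq\int\kappa\dd m_{-\tau}$, i.e.\ ${\rm I}\geq 1$. Your endpoint slope computation and your handling of the equality case (tangent equal to chord forces the curve to be the chord, then Theorem \ref{thm:man-strctily-conv}) are in principle sound, and the technical caveats you raise about differentiability and finiteness at $(1,0)$ are legitimate and adequately addressed.

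However, the central inequality step as written is broken by two sign errors that happen to cancel. Write the curve as $b=\chi(a)$; since $\{(a,b):P(-a\tau-b\kappa)\leq 0\}$ is a convex set lying \emph{above} the curve, $\chi$ is a convex function with $\chi(0)=1$, $\chi(1)=0$, so $\chi(a)\leq 1-a$ on $[0,1]$ and the one-sided derivative at the \emph{right} endpoint satisfies $\chi'(1^-)\geq -1$: the secant slopes $\frac{0-\chi(a)}{1-a}\geq -1$ increase to it, so the curve is \emph{less} steep than the chord at $(1,0)$, not steeper. Your claim that $\left.\dd b/\dd a\right|_{(1,0)}\leq -1$ is therefore false in general (that is the correct inequality at the \emph{other} endpoint $(0,1)$). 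You then compound the error: from $-1/{\rm I}\leq -1$ one actually gets $1/{\rm I}\geq 1$, i.e.\ ${\rm I}\leq 1$, the opposite of what you assert. The two flips cancel, so your conclusion is correct while neither intermediate statement is. The repaired chain is: $\chi'(1^-)=-\int\tau\dd m_{-\tau}\big/\int\kappa\dd m_{-\tau}=-1/{\rm I}\geq -1$, hence ${\rm I}\geq 1$; and equality forces the tangent at $(1,0)$ to coincide with the chord, so the convex curve is squeezed between its tangent line and the chord and must equal the straight segment, at which point Theorem \ref{thm:man-strctily-conv} gives conjugacy. With that correction your proof is complete.
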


\begin{proof}
By Proposition \ref{prop:intersection-dynamics}, we know 
\[
{\rm I}(\rho_{1},\rho_{2})=\frac{\int\kappa\dd m_{-\tau}}{\int\tau\dd m_{-\tau}}.
\]

Recall that $m_{-\kappa}$ is the equilibrium state of $-\kappa$
and $m_{-\tau}\in{\cal M}_{\s}$ we have 
\[
0=P(-\kappa)=h_{\s}(m_{-\kappa})-\int\kappa\dd m_{-\kappa}\geq h_{\s}(m_{-\tau})-\int\kappa\dd m_{-\tau}.
\]
Notice that $m_{-\tau}$ is the equilibrium state of $-\tau$, i.e.,
$0=P(-\tau)=h_{\s}(m_{-\tau})-\int\tau\dd m_{-\tau}$, we have 
\[
\int\tau\dd m_{-\tau}=h_{\sigma}(m_{-\tau})\leq\int\kappa\dd m_{-\tau}.
\]
The rigidity part has proved in Theorem \ref{thm:man-strctily-conv}.
More precisely, we proved in Theorem \ref{thm:man-strctily-conv}
that if $1=\frac{\int\kappa\dd m_{-\tau}}{\int\tau\dd m_{-\tau}}$
then $\rho_{1}\sim\rho_{2}$ in $\psl(2,\R)$. 
\end{proof}

\section{The Pressure Metric \label{sec:The-Pressure-Metric}}

\subsection{The Pressure Metric and Thurston's Riemannian Metric}

The aim of this subsection is to construct a Riemannian metric for
the Teichm\"uller space of surfaces with punctures. Using the symbolic
model of geodesics flows discussed in Section \ref{sec:Geodesic-Flows-Symbolic},
we can relate the Teichm\"uller space with the space of geometric
potentials. 

Recall that $S=S_{g,n}$ is an orientable surface of $g$ genus and
$n$ punctures and with negative Euler characteristic. The Teichm\"uller
space ${\cal T}(S)$ is the space of conjugacy classes of finite area
type-preserving Fuchsian representations. By Section \ref{sec:Geodesic-Flows-Symbolic},
we know that for every $\rho\in{\cal T}(S)$, the geodesics flow on
a smaller section $\Omega_{0}\subset T^{1}X_{\rho}$ conjugates the
suspension flow over a Markov shift $(\TMS,\s)=(\TMS_{A},\sigma_{A})$
with a unique (up to cohomologacy) locally H\"older continuous roof
function $\tau$. We want to point out again that the Markov shift
$(\TMS,\s)$ is constructed through the shape of fundamental domain.
Since type-preserving Fuchsian representations have the same shape
of fundamental domain, we know the suspension flow models for all
$\rho\in{\cal T}(S)$ have the same base space $(\TMS,\s)$ yet with
different roof functions. 

Let $\mathtt{\mathbf{P}}$ be the set of pressure zero locally H\"older
continuous functions on $\TMS$, that is,
\[
\mathbf{\mathbf{P}}:=\{\tau\in C(\TMS):\tau{\rm \ is\ locally\ H\ddot{o}lder,\ }P(-\tau)=0\}.
\]
In the following, we will discuss the relations between ${\cal T}(S)$
and $\mathtt{\mathbf{P}}$. Notice that since ${\cal T}(S)$ is composed
by representations in $\psl(2,\R)$, it inherits a natural analytic
structure from $\psl(2,\R)$ (see \cite{Hamenstadt:2003wn} for more
details). The following proposition indicates that there exists an
analytic thermodynamic mapping $\Phi:{\rm Hom_{tp}^{F}}(\pi_{1}(S),{\rm PSL}(2,\R))\to\mathbf{\mathbf{P}}$. 

\begin{prop}
[Thermodynamic Mapping]\label{prop:thermodynamic_map} Let $0<\ep\ll1$
and $\{\rho_{t}\}_{t\in(-\ep,\ep)}\subset{\cal T}(S)$ be an analytic
one-parameter family in ${\cal T}(S)$, then $\Phi(\{\rho_{t}\})=\{\tau_{t}\}\subset\mathbf{\mathbf{P}}$
is an analytic one-parameter family in $\mathbf{\mathbf{P}}$.\textcolor{red}{{} }
\end{prop}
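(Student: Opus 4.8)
The plan is to show that the geometric potential $\tau_t$ depends analytically on $t$ by tracing through the explicit construction of $\tau_t$ from the Fuchsian representation $\rho_t$. First I would recall that, fixing the combinatorial data of the fundamental domain (which is constant along the path by the type-preserving hypothesis and the Fenchel--Nielsen Isomorphism Theorem), the generators $g_i^{(t)} = \rho_t(g_i) \in \psltr$ depend analytically on $t$, since $t \mapsto \rho_t$ is analytic and the coding map uses the chosen free generators $\langle g_1,\dots,g_k\rangle$. Consequently, for each alphabet letter $\underline a \in {\cal S}_A$, the associated word $g_{\underline a}^{(t)} = g_{s_1}^{(t)}\circ\cdots\circ g_{s_{n-N^*}}^{(t)}$ is an analytic $\psltr$-valued function of $t$, and the attracting limit point $\xi(x)$ is determined purely combinatorially (by the cylinder containing $x$) and is independent of $t$. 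Then on each cylinder the value $\tau_t(x) = B_{\xi(x)}(o, g_{x_0}^{(t)} o)$ is given in the disk model by the explicit formula $\ln\frac{1-|g_{x_0}^{(t)}o|^2}{|\xi(x)-g_{x_0}^{(t)}o|^2}$, which is a composition of analytic functions of $t$ (with no vanishing denominator, since $g_{x_0}^{(t)}o$ stays in the interior of $\D$ and $\xi(x)$ on the boundary), hence $t\mapsto \tau_t(x)$ is real analytic for each fixed $x$.

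The second ingredient is to upgrade this pointwise analyticity to analyticity of the family in the Banach space of locally H\"older continuous functions (with the relevant norm from the thermodynamic formalism, e.g.\ $\|f\| = \|f\|_\infty + \sup_n {\rm V}_n(f)/\theta^n$ or the analogous norm used in Theorem \ref{thm:analyticity-pressure}). Here I would derive uniform (in $t$, locally) estimates: the H\"older constants and the sup-bounds in Proposition \ref{prop:Geo_Poten_II} (\ref{Prop:Geo_Poten_II_tau_bdd}) — namely $|\tau_t(x) - 2\ln|x_0|| \le C_1$ on $\Sigma_A({\rm II})$ and boundedness on $\Sigma_A({\rm I})$ — hold with constants that can be taken uniform for $t$ in a compact subinterval, because they ultimately depend only on the finitely many shapes of alphabet letters and on the compact data $\{g_i^{(t)}\}$. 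The same applies to the difference quotients $\frac{\tau_{t+h}-\tau_t}{h}$ and to higher derivatives, obtained by differentiating the explicit Busemann formula; each $\partial_t^k \tau_t$ is again locally H\"older with uniformly controlled norm, and the Taylor expansion of $t\mapsto \tau_t(x)$ converges in norm on a fixed neighborhood of each $t_0$. This gives that $t\mapsto\tau_t$ is an analytic curve into the space of locally H\"older functions.

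Finally I would observe that the image indeed lies in $\mathbf P$: by Proposition \ref{prop:Geo_Poten_II}(3), or equivalently Corollary \ref{cor:Bowen-formula-tau}, each finite area representation $\rho_t$ has $P(-\tau_t) = 0$, so $\tau_t \in \mathbf P$ for all $t$. Thus $\Phi(\{\rho_t\}) = \{\tau_t\}$ is an analytic one-parameter family in $\mathbf P$, where the analytic structure on $\mathbf P$ is the one induced as a (codimension-one, by the pressure) analytic submanifold of the Banach space of locally H\"older functions — one checks via Theorem \ref{thm:analyticity-pressure} and the Implicit Function Theorem that $P(-\cdot) = 0$ cuts out such a submanifold, since $\frac{d}{dt}P(-\tau_t) = \int \dot\tau_t \, \dd\nu_{-\tau_t}$ is a nonzero (indeed, bounded below by a positive constant times $\int\dot\tau_t$ in the relevant directions) continuous linear functional.

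I expect the main obstacle to be the passage from pointwise analyticity to analyticity in the function-space norm: one must verify that the remainder terms in the Taylor expansion of $\tau_t$ are controlled \emph{uniformly across all cylinders}, i.e.\ that the bounds coming from Proposition \ref{prop:Geo_Poten_II} degrade at worst like $\ln|x_0|$ and with $t$-independent constants, so that the series $\sum_k \frac{1}{k!}\partial_t^k\tau_{t_0}\, h^k$ converges in the locally H\"older norm for $|h|$ small. The finiteness of the set of shapes $\mathfrak s(\underline a)$ and the explicit $\ln|x_0|$-type control are exactly what make this work, but writing the uniform estimates carefully is the technical heart of the argument.
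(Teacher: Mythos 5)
There is a genuine gap in your argument: you assert that the attracting limit point $\xi(x)$ is ``determined purely combinatorially (by the cylinder containing $x$) and is independent of $t$.'' This is false. The symbolic label of $x$ (its cutting/boundary sequence) is indeed constant along the path, but the \emph{geometric} endpoint $\xi_t(x)=\bigcap_{k\geq 1} I^+_{s_1,\dots,s_k}(t)$ is a nested intersection of boundary intervals constructed from the generators $g_i^{(t)}=\rho_t(g_i)$ and from the ideal fundamental polygon $D_0(t)$, both of which move with $t$. So $\xi_t(x)$ depends on $t$, and your closed-form expression $\tau_t(x)=\ln\frac{1-|g^{(t)}_{x_0}o|^2}{|\xi(x)-g^{(t)}_{x_0}o|^2}$ is wrong: the correct formula, used in the paper, is $\tau_t(x)=\ln\frac{1-|\rho_t(g_{x_0})o|^2}{|\,\mathrm{b}_t\circ\xi_0(x)-\rho_t(g_{x_0})o|^2}$, where $\mathrm{b}_t$ is the equivariant boundary homeomorphism between $\Lambda(\rho_0(\pi_1 S))$ and $\Lambda(\rho_t(\pi_1 S))$ and $\xi_t=\mathrm{b}_t\circ\xi_0$.

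This missing ingredient is precisely where the paper concentrates its effort. Establishing that $t\mapsto\mathrm{b}_t$ is real analytic is not elementary: $\xi_0(x)$ is generically \emph{not} a fixed point of any $\rho_0(\gamma)$, so one cannot just observe that ``the orbit of $\xi_0(x)$ moves analytically.'' The paper instead passes to the quasi-Fuchsian neighborhood $QF(S)$, observes that for the dense set of fixed points of nontrivial group elements the motion is holomorphic, and then invokes Slodkowski's generalized $\lambda$-lemma (as in McMullen and in Bridgeman--Canary) to conclude that the full boundary map $\overline{\mathrm b}_t$ varies complex analytically, hence $\mathrm b_t$ real analytically. Only after this step does one substitute into the Busemann formula. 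Your second concern --- upgrading pointwise analyticity of $t\mapsto\tau_t(x)$ to analyticity of $t\mapsto\tau_t$ in the locally H\"older Banach norm, with estimates uniform over the unbounded alphabet --- is a legitimate point (and the paper's phrase ``from the above expression we know $\tau_t$ also varies real analytically'' is terse on exactly this), but it is secondary; the primary gap is that you have suppressed the $t$-dependence of $\xi$ and hence skipped the $\lambda$-lemma argument entirely.
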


\begin{proof}
We first notice that if $\{\rho_{t}\}\subset{\cal T}(S)$ is analytic
then the boundary map (derived in Theorem \ref{thm:Fenchel-Neilsen-Thm})
${\rm b}_{t}:\vbdy\H=\L(\rho_{0}(\pi_{1}S))\to\L(\rho_{t}(\pi_{1}S))=\vbdy\H$
is real analytic (see \cite[Section 2]{McMullen:2008eh} or \cite[Proposition 4.1]{Bridgeman:2017jy}).
For the completeness, we summarize the proof of this fact. The idea
of \cite[Section 2]{McMullen:2008eh}, as well as \cite[Proposition 4.1]{Bridgeman:2017jy},
is a complex analytic approach, namely, using holomorphic motions
and the $\lambda-$lemma. 

Let us denote $QF(S)$ the space conjugacy classes of quasi-Fuchsian
(i.e., the limit set is a Jordan curve) representations of $\pi_{1}(S)\to{\rm \psl(2,\mathbb{C})}$.
Recall that $QF(S)$ is an open neighborhood of ${\cal T}(S)$ in
the $\psl(2,\mathbb{C})-$character variety of $\pi_{1}(S)$. Let
$\rho_{t}$ vary in $QF(S)$, then there exist an embedding $\overline{{\rm b}}_{t}:\vbdy\H\to\L(\rho_{t}(\pi_{1}S)\subset\hat{\C}$.
Notice that $\rho_{0}\in{\cal T}(S)$ is fixed. It is clear that if
$\xi\in\L(\rho_{0}(\pi_{1}S))$ is fixed by a nontrivial element $\rho_{0}(\g)$,
then $b_{t}(\xi)$ varies holomorphically. Thus by Slodkowski's generalized
$\lambda-$lemma (cf. \cite{Slodkowski:1991hu}), we know $\overline{{\rm b}}_{t}$
varies complex analytically when $\rho_{t}$ varies in $QF(S)$; hence,
${\rm b}_{t}(={\rm \overline{b}_{t}})$ varies real analytically when
$\rho_{t}$ varies in ${\cal T}(S)$. 

To see $\{\tau_{t}\}$ is real analytic, by definition 
\[
\tau_{t}(x)=B_{\xi_{t}(x)}(o,\rho_{t}(g_{x_{0}})o)
\]
 where $x=x_{0}...$, and $\xi_{t}={\rm b}_{t}\circ\xi_{0}:\TMS\to\Lambda(\rho_{t}(\pi_{1}S))$.
Recall that in the disk model, we know $B_{\xi}(x,y)=\ln(\frac{1-|y|^{2}}{|\xi-y|^{2}}\frac{|\xi-x|^{2}}{1-|x|^{2}})$.
Thus we have, without loss of generality, taking $o$ to be the origin,
\[
\tau_{t}(x)=B_{{\rm b}_{t}\circ\xi_{0}}(o,\rho_{t}(g_{x_{0}})o)=\ln\frac{1-|\rho_{t}(g_{x_{0}})o|^{2}}{|{\rm b}_{t}\circ\xi_{0}(x)-\rho_{t}(g_{x_{0}})o|^{2}}.
\]
 Since both $\rho_{t}$ and $b_{t}$ vary real analytically, from
the above expression we know $\tau_{t}$ also varies real analytically. 
\end{proof}

By Corollary \ref{cor:bdd-diff-loof-fcn} we know that $\tau_{\rho_{1}}$
is locally H\"older continuous and $|\Phi(\rho_{1})-\Phi(\rho_{0})|=|\tau_{\rho_{1}}-\tau_{\rho_{0}}|$
is bounded for all $\rho_{0},\rho_{1}\in{\cal {\cal T}}(S)$. Thus,
consider an analytic path $\rho_{t}\subset{\cal T}(S)$, and we write
out the analytic path $\tau_{t}=\Psi(\rho_{t})$, in terms of Taylor
expansion, $\tau_{t}=\tau_{0}+t\cdot\dot{\tau}_{0}+...$. We know
the perturbation $\dot{\tau}_{0}$ is a bounded locally H\"older
continuous function. Therefore, it is sufficient to consider $T_{\tau_{0}}\mathbf{P}$,
the corresponding tangent space of $T_{\rho_{0}}{\cal T}(S)$, as
\[
T_{\tau_{0}}\mathbf{P}:=\{f\in C(\TMS):\int_{\TMS}f\dd m_{-\tau_{0}}=0,f{\rm \ is\ locally\ H\ddot{o}lder\ and\ bounded}\}\subset{\rm Ker}D_{-\tau_{0}}P.
\]
Moreover, we are interested in the pressure norm $||\cdot||_{P}$
on $\mathbf{P}$ given by 
\[
||f||_{{\rm P}}:=\frac{{\rm Var}(f,m_{-\tau_{0}})}{\int\tau_{0}\dd m_{-\tau_{0}}}.
\]
Notice that this norm degenerates precisely when $f\sim0$. In the
theorem below, we prove that one can define the \textit{pressure metric}
$||\cdot||_{{\rm }}$ on ${\cal T}(S_{g,n})$ through $||\cdot||_{P}$:

\begin{thm}
[Theorem \ref{thm:pressure metric}] \label{thm:Proof of ThmD}Suppose
$0<\ep\ll1$ and $\rho_{t}\in{\cal T}(S_{g,n})$ is an analytic path
for $t\in(-\ep,\ep)$. Then ${\rm I}(\rho_{0},\rho_{t})$ is real
analytic and 
\[
||\dot{\rho}_{0}||_{{\rm }}^{2}:=||d\Psi(\dot{\rho}_{0})||_{{\rm P}}^{2}=\left.\frac{{\rm d}^{2}{\rm I}(\rho_{0},\rho_{t})}{{\rm d}t^{2}}\right|_{t=0}
\]
defines a Riemannian metric on ${\cal T}(S_{g,n})$.
\end{thm}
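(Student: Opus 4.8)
The plan is to run the McMullen--Bridgeman--Canary--Labourie--Sambarino argument in the present symbolic setting, where the whole statement reduces to the first and second variation of the Gurevich pressure. Fix $\rho_0\in\mathcal{T}(S_{g,n})$, write $\tau_0=\Psi(\rho_0)$, and let $m_0=m_{-\tau_0}$ be its equilibrium state; throughout I replace each geometric potential by a cohomologous locally H\"older representative that is bounded away from $0$ (as in the remark after Proposition~\ref{prop:Geo_Poten_II}), which changes neither equilibrium states, nor variances, nor the integrals $\int\tau\dd m_0$, and which makes $0<\int_{\TMS}\tau_0\dd m_0<\infty$. Along an analytic path $\rho_t$, Proposition~\ref{prop:thermodynamic_map} together with Corollary~\ref{cor:bdd-diff-loof-fcn} says that $\tau_t:=\Psi(\rho_t)$ is an analytic family of locally H\"older functions with $\tau_t-\tau_0$ uniformly bounded on $(-\ep,\ep)\times\TMS$; by Cauchy estimates all the Taylor coefficients $\dot\tau_0,\ddot\tau_0,\dots$ are then bounded locally H\"older functions, and (from the proof of Proposition~\ref{prop:thermodynamic_map}, where $\Psi$ depends analytically on $\rho$, not just along paths) $d\Psi(\dot\rho_0)=\dot\tau_0$ is a well-defined linear function of $\dot\rho_0$. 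By Proposition~\ref{prop:intersection-dynamics},
\[
{\rm I}(\rho_0,\rho_t)=\frac{\int_{\TMS}\tau_t\dd m_0}{\int_{\TMS}\tau_0\dd m_0}=1+\frac{1}{\int_{\TMS}\tau_0\dd m_0}\int_{\TMS}(\tau_t-\tau_0)\dd m_0,
\]
and since $m_0$ is fixed, $t\mapsto\int_{\TMS}(\tau_t-\tau_0)\dd m_0$ is a bounded linear functional composed with an analytic Banach-space-valued curve, hence real analytic; this gives the real analyticity of ${\rm I}(\rho_0,\rho_t)$.

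For the identification with the pressure norm, note that each $\tau_t$ lies in $\mathbf{P}$, i.e.\ $P(-\tau_t)\equiv 0$ with $P(-\tau_t)<\infty$ throughout (Corollary~\ref{cor:Bowen-formula-tau}), so Theorem~\ref{thm:analyticity-pressure} applies to the analytic family $-\tau_t$. Differentiating $P(-\tau_t)\equiv 0$ once gives $0=-\int_{\TMS}\dot\tau_0\dd m_0$, so $\dot\tau_0\in T_{\tau_0}\mathbf{P}$ and ${\rm I}'(0)=0$. Differentiating twice and using the standard second-variation formula for the Gurevich pressure --- obtained by applying Theorem~\ref{thm:analyticity-pressure} to the two-parameter affine family $-\tau_0-s\dot\tau_0-u\ddot\tau_0$ and combining it with the expansion of Corollary~\ref{cor:derivative-pressure} (which applies since $\dot\tau_0$ is bounded) --- yields
\[
0=\left.\frac{{\rm d}^2}{{\rm d}t^2}\right|_{t=0}P(-\tau_t)={\rm Var}(\dot\tau_0,m_0)-\int_{\TMS}\ddot\tau_0\dd m_0,
\]
so $\int_{\TMS}\ddot\tau_0\dd m_0={\rm Var}(\dot\tau_0,m_0)$. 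Dividing by $\int_{\TMS}\tau_0\dd m_0$ and differentiating under the integral sign in the displayed formula for ${\rm I}(\rho_0,\rho_t)$ gives
\[
\left.\frac{{\rm d}^2{\rm I}(\rho_0,\rho_t)}{{\rm d}t^2}\right|_{t=0}=\frac{{\rm Var}(\dot\tau_0,m_0)}{\int_{\TMS}\tau_0\dd m_0}=\frac{{\rm Var}(d\Psi(\dot\rho_0),m_0)}{\int_{\TMS}\tau_0\dd m_0}=\|d\Psi(\dot\rho_0)\|_{{\rm P}}^{2},
\]
which is the asserted identity. The right-hand side is a nonnegative quadratic form in $\dot\rho_0$ because $d\Psi$ is linear and ${\rm Var}(\cdot,m_0)$ is a nonnegative quadratic form on locally H\"older functions; in particular $\left.\tfrac{{\rm d}^2}{{\rm d}t^2}\right|_0{\rm I}(\rho_0,\rho_t)\geq 0$, consistently with ${\rm I}(\rho_0,\rho_t)\geq 1$ having a minimum along the diagonal.

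It remains to check positive-definiteness. If $\|\dot\rho_0\|^2=0$ then ${\rm Var}(\dot\tau_0,m_0)=0$, so by Corollary~\ref{cor:derivative-pressure} the function $\dot\tau_0$ is cohomologous to $0$; by the Liv\v{s}ic criterion in Theorem~\ref{thm:props-of-Gibbs-measures}, $S_m\dot\tau_0(x)=0$ for every $x\in{\rm Fix}^m$, and since $S_m\tau_t(x)=l_{\rho_t}[\g]$ for the closed geodesic $\g$ corresponding to $x$ (Proposition~\ref{prop:Geo_Poten_II}(1)), this says that $\frac{{\rm d}}{{\rm d}t}l_{\rho_t}[\g]=0$ at $t=0$ for every conjugacy class $[\g]$ in $\pi_1(S)$. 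The derivative of the marked length spectrum is injective on $\mathcal{T}(S_{g,n})$ --- finitely many geodesic length functions furnish real-analytic local coordinates, a classical fact that is the infinitesimal counterpart of the rigidity statement of Theorem~\ref{thm:proportinal marked length spectrum} and can be obtained via quasi-Fuchsian deformations as in the proof of Proposition~\ref{prop:thermodynamic_map} --- so the vanishing of all these derivatives forces $\dot\rho_0=0$. Finally, $\rho_0\mapsto(\tau_0,m_0,{\rm Var}(\cdot,m_0))$ varies real-analytically (Proposition~\ref{prop:thermodynamic_map} and Theorem~\ref{thm:analyticity-pressure}), so $\|\cdot\|$ is a genuine real-analytic Riemannian metric on $\mathcal{T}(S_{g,n})$.

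The \emph{main obstacle} is precisely the positive-definiteness: once the pressure-analyticity and second-variation inputs are in place the rest is routine thermodynamic bookkeeping, but passing from ``${\rm Var}(\dot\tau_0,m_0)=0$'' to ``$\dot\rho_0=0$'' requires the injectivity of the derivative of the marked length spectrum on $\mathcal{T}(S_{g,n})$, which for punctured surfaces has to be imported from Teichm\"uller theory rather than derived within the symbolic machinery. A secondary technical point, pervasive throughout, is that the potentials $\tau_t$ are unbounded near the cusps, so one must work with the \emph{differences} $\tau_t-\tau_0$ (uniformly bounded by Corollary~\ref{cor:bdd-diff-loof-fcn}); this is exactly what legitimizes differentiation under the integral sign and the use of Corollary~\ref{cor:derivative-pressure}, whose hypothesis is boundedness of the perturbation.
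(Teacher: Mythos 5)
Your proposal is correct and follows essentially the same route as the paper: the dynamical formula for ${\rm I}(\rho_{0},\rho_{t})$ from Proposition \ref{prop:intersection-dynamics}, the first and second variation of $P(-\tau_{t})\equiv 0$ to identify $\int\ddot{\tau}_{0}\dd m_{-\tau_{0}}$ with ${\rm Var}(\dot{\tau}_{0},m_{-\tau_{0}})$, and non-degeneracy via Liv\v{s}ic plus the fact that finitely many geodesic length functions give local coordinates on ${\cal T}(S_{g,n})$. Your added care about working with the bounded differences $\tau_{t}-\tau_{0}$ near the cusps is a welcome refinement of the same argument, not a different proof.
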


\begin{proof}
Follows Proposition \ref{prop:intersection-dynamics} and Proposition
\ref{prop:thermodynamic_map}, we know ${\rm I}(\rho_{0},\rho_{t})$
is real analytic. Thus, it is sufficient to show that $\left.\frac{{\rm d}^{2}{\rm I}(\rho_{0},\rho_{t})}{{\rm d}t^{2}}\right|_{t=0}=||d\Psi(\dot{\rho}_{0})||_{{\rm P}}^{2}$
and $||d\Psi(\dot{\rho}_{0})||_{{\rm P}}^{2}>0$ when $\dot{\rho}_{0}\neq0$. 

By Proposition \ref{prop:intersection-dynamics} and Proposition \ref{prop:thermodynamic_map},
we know 
\[
\left.\frac{{\rm d}^{2}{\rm I}(\rho_{0},\rho_{t})}{{\rm d}t^{2}}\right|_{t=0}=\left.\frac{{\rm d}^{2}}{{\rm d}t^{2}}(\frac{\int\tau_{t}\dd m_{-\tau_{0}}}{\int\tau_{0}\dd m_{-\tau_{0}}})\right|_{t=0}=\frac{\int\ddot{\tau}_{0}\dd m_{-\tau_{0}}}{\int\tau_{0}\dd m_{-\tau_{0}}}
\]
where $\tau_{t}=\Psi(\rho_{t})$. Moreover, by Corollary \ref{cor:derivative-pressure},
we know 
\[
0=\left.\frac{{\rm d}^{2}P(-\tau_{t})}{{\rm d}t^{2}}\right|_{t=0}=(D_{-\tau_{0}}P)(-\ddot{\tau}_{0})+(D_{-\tau_{0}}^{2}P)(-\dot{\tau}_{0})=-\int\ddot{\tau}_{0}\dd m_{-\tau_{0}}+{\rm Var}(-\dot{\tau}_{0},m_{-\tau_{0}})
\]
and ${\rm Var}(-\dot{\tau}_{0},m_{-\tau_{0}})=0$ if and only if $\dot{\tau}_{0}\sim0$. 

To see the non-degeneracy, suppose $\dot{\tau}_{0}\sim0$ and let
$h$ be any hyperbolic element. Then $l(\rho_{t}[h])=S_{m}\tau_{t}(x)$
for some $x\in{\rm Fix}^{m}$, and thus 
\[
\left.\frac{{\rm d}}{{\rm d}t}\right|_{t=0}l(\rho_{t}[h])=S_{m}\dot{\tau}_{0}(x)=0.
\]
Moreover, since ${\cal T}(S)$ is can be parametrized by finitely
many (simple) closed geodesics (cf., for example, \cite{Hamenstadt:2003wn}),
 $\left.\frac{{\rm d}}{{\rm d}t}\right|_{t=0}l(\rho_{t}[h])=0$ for
all $h$ is hyperbolic implies $\dot{\rho}_{0}=0$. Hence, we have
$||\dot{\rho}_{0}||_{{\rm }}^{2}:=||d\Psi(\dot{\rho}_{0})||_{{\rm P}}^{2}=\left.\frac{{\rm d}^{2}{\rm I}(\rho_{0},\rho_{t})}{{\rm d}t^{2}}\right|_{t=0}$
and $||\dot{\rho}_{0}||_{{\rm }}^{2}=0$ if and only $\dot{\rho}_{0}=0$
in $T_{\rho_{0}}{\cal T}(S_{g,n})$. 
\end{proof}

\subsection{The Pressure Metric and Manhattan Curves}

In this subsection, we will prove Theorem \ref{thm:pressure-metric-manhattan-curve},
which points out that one can recover the Thurston's Riemannian metric
through varying the Manhattan curves. Let $\{\rho_{t}\}\in{\cal T}(S_{g,n})$
be an analytic path, and ${\cal C}(\rho_{0},\rho_{t})$ be the Manhattan
curve of $\rho_{0},\rho_{t}$. By Theorem \ref{Thm:manhattan-strictly-convex},
we know ${\cal C}(\rho_{0},\rho_{t})$ is a real analytic curve. Thus
we can parametrize ${\cal C}(\rho_{0},\rho_{t})$ by writing ${\cal C}(\rho_{0},\rho_{t})=\{(s,\chi_{t}(s)):s\in[0,1]\}$
where $\chi_{t}(s)$ is a real analytic function. See Figure 6.1.

\begin{figure}

\includegraphics[scale=1.5]{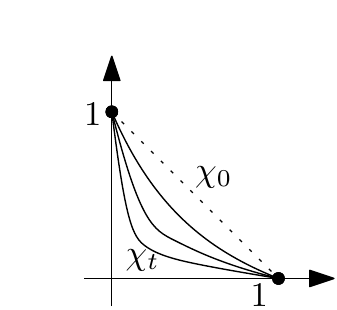}\caption{Manhattan Curves}

\end{figure}

\begin{thm}
[Theorem \ref{thm:pressure-metric-manhattan-curve}] Following the
above notations, we have for all $s\in(0,1)$
\[
\left.\frac{{\rm d}^{2}\chi_{t}(s)}{{\rm d}t^{2}}\right|_{t=0}=s(s-1)\cdot||\dot{\rho}_{0}||_{{\rm }}^{2}.
\]
\end{thm}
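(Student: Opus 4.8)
The plan is to compute the second derivative of the implicitly-defined function $\chi_t(s)$ using the characterization of the Manhattan curve from Corollary~\ref{cor:The-Manhattan-curve-pressure}, namely that $(s,\chi_t(s))$ lies on ${\cal C}(\rho_0,\rho_t)$ precisely when $P(-s\tau_0-\chi_t(s)\kappa_t)=0$, where $\tau_0=\Psi(\rho_0)$ and $\kappa_t=\Psi(\rho_t)$ are the geometric potentials over the common Markov shift $(\TMS,\sigma)$. The key observation is that since all $\rho_t$ are type-preserving finite area Fuchsian representations, they share the same base shift, so $\kappa_t$ is a real analytic family in $\mathbf{P}$ by Proposition~\ref{prop:thermodynamic_map}, with $\kappa_0=\tau_0$ (up to cohomology, which is all that matters for pressure). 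Then $\chi_t(s)$ is real analytic in $t$ by the analyticity of pressure (Theorem~\ref{thm:analyticity-pressure}) together with the Implicit Function Theorem, as in Corollary~\ref{cor:sol_analytic}; moreover $\chi_0(s)=1-s$ since ${\cal C}(\rho_0,\rho_0)$ is the straight line from $(1,0)$ to $(0,1)$.

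Concretely, I would differentiate the identity $P\big(-s\tau_0-\chi_t(s)\kappa_t\big)=0$ twice in $t$ at $t=0$, holding $s$ fixed. Writing $f_t:=-s\tau_0-\chi_t(s)\kappa_t$, we have $\dot f_0=-\dot\chi_0(s)\,\tau_0-\dot\kappa_0$ (using $\chi_0(s)=1-s$, $\kappa_0=\tau_0$) and $\ddot f_0=-\ddot\chi_0(s)\,\tau_0-2\dot\chi_0(s)\dot\kappa_0-\ddot\kappa_0$. Using the first and second derivative formulas for pressure from Theorem~\ref{thm:analyticity-pressure} and Corollary~\ref{cor:derivative-pressure}, namely $\frac{d}{dt}P(f_t)\big|_0=\int\dot f_0\,dm_{-\tau_0}$ and $\frac{d^2}{dt^2}P(f_t)\big|_0=\int\ddot f_0\,dm_{-\tau_0}+{\rm Var}(\dot f_0,m_{-\tau_0})$, the first-order equation gives $\dot\chi_0(s)\int\tau_0\,dm_{-\tau_0}+\int\dot\kappa_0\,dm_{-\tau_0}=0$. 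Since $\kappa_t\in\mathbf{P}$ means $P(-\kappa_t)=0$ for all $t$, differentiating that gives $\int\dot\kappa_0\,dm_{-\tau_0}=0$, hence $\dot\chi_0(s)=0$; this is the crucial simplification. Then the second-order equation collapses to
\[
-\ddot\chi_0(s)\int\tau_0\,dm_{-\tau_0}-\int\ddot\kappa_0\,dm_{-\tau_0}+{\rm Var}\big(-s\tau_0-\dot\kappa_0,m_{-\tau_0}\big)=0.
\]
Since $\dot\chi_0(s)=0$, the variance term is ${\rm Var}(-s\tau_0-\dot\kappa_0,m_{-\tau_0})$, and because $\int\tau_0\,dm_{-\tau_0}=h_\sigma(m_{-\tau_0})$ one can also use $-\tau_0\sim 0$ only in the relevant cohomological sense—but more directly, expanding the variance and using that differentiating $P(-\kappa_t)=0$ twice gives $\int\ddot\kappa_0\,dm_{-\tau_0}={\rm Var}(\dot\kappa_0,m_{-\tau_0})$ (by Corollary~\ref{cor:derivative-pressure} applied to the family $-\kappa_t$), the $\int\ddot\kappa_0$ term cancels against part of the expanded variance.

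Carrying out that cancellation: ${\rm Var}(-s\tau_0-\dot\kappa_0,m_{-\tau_0})={\rm Var}(\dot\kappa_0)+2s\,{\rm Cov}(\tau_0,\dot\kappa_0)+s^2{\rm Var}(\tau_0)$, and since $P(-\kappa_t)\equiv 0$ forces $\int\ddot\kappa_0\,dm_{-\tau_0}={\rm Var}(\dot\kappa_0,m_{-\tau_0})$, the displayed equation becomes $-\ddot\chi_0(s)\int\tau_0\,dm_{-\tau_0}+2s\,{\rm Cov}(\tau_0,\dot\kappa_0)+s^2\,{\rm Var}(\tau_0,m_{-\tau_0})=0$. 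The remaining task is to identify ${\rm Cov}(\tau_0,\dot\kappa_0)$ and ${\rm Var}(\tau_0,m_{-\tau_0})$: since $\tau_0$ has pressure zero and $m_{-\tau_0}$ is its equilibrium state, one shows $\tau_0+\dot\kappa_0$ relates to $\dot\tau_0$ in a way that lets the covariance be rewritten—in fact, differentiating $P(-\kappa_t)=0$ at $t=0$ shows $\dot\kappa_0$ has mean zero against $m_{-\tau_0}$, and I expect ${\rm Cov}(\tau_0,\dot\kappa_0)=-{\rm Var}(\dot\tau_0)$ or a similar identity, so that the whole expression reorganizes into $-s(s-1){\rm Var}(\dot\tau_0,m_{-\tau_0})$ after noting the pressure-zero normalization makes $\tau_0$ itself play the role of a derivative direction. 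Then dividing through by $\int\tau_0\,dm_{-\tau_0}$ yields $\ddot\chi_0(s)=s(s-1)\cdot\frac{{\rm Var}(\dot\tau_0,m_{-\tau_0})}{\int\tau_0\,dm_{-\tau_0}}=s(s-1)\|d\Psi(\dot\rho_0)\|_{\rm P}^2=s(s-1)\|\dot\rho_0\|^2$, using the definition of the pressure norm and Theorem~\ref{thm:Proof of ThmD}.

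\textbf{Main obstacle.} The genuinely delicate point is bookkeeping the variance/covariance terms correctly—specifically, extracting the factor $s(s-1)=s^2-s$ from the combination $2s\,{\rm Cov}(\tau_0,\dot\kappa_0)+s^2\,{\rm Var}(\tau_0)$ (plus whatever survives from $\int\ddot\kappa_0$), which requires the precise relationship between $\tau_0$, $\dot\kappa_0$, and the derivative direction $\dot\tau_0$ at $t=0$. The resolution is that along the Manhattan curve the constraint $P(-s\tau_0-\chi_t(s)\kappa_t)=0$ couples the $s$-direction and the $t$-direction, and the identity $\chi_0(s)=1-s$ together with $\kappa_0=\tau_0$ means the "effective perturbation" seen by the variance is $(1-s)\dot\kappa_0$ rescaled, which after using $P(-\kappa_t)\equiv 0$ and the cocycle/cohomology structure of these potentials produces exactly $-s(1-s){\rm Var}(\dot\tau_0,m_{-\tau_0})$. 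One must be careful that ${\rm Var}$ is a true quadratic form (so cross terms are covariances) and that all measures of integration are $m_{-\tau_0}$, not $m_{-\kappa_t}$; keeping $m_{-\tau_0}$ fixed throughout is exactly what makes the $t$-derivatives of the constraint tractable via Corollary~\ref{cor:derivative-pressure}.
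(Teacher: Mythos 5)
Your overall plan is the same as the paper's: differentiate the constraint $P(-s\tau_0 - \chi_t(s)\tau_t) = 0$ twice in $t$, use the derivative-of-pressure formulas, and exploit $P(-\tau_t)\equiv 0$ to trade $\int\ddot\tau_0\,dm_{-\tau_0}$ for ${\rm Var}(\dot\tau_0,m_{-\tau_0})$. You also correctly get $\dot\chi_0(s)=0$. However, there is a computational slip that derails the calculation: when you differentiate $f_t = -s\tau_0 - \chi_t(s)\kappa_t$ in $t$, the product rule gives $\dot f_0 = -\dot\chi_0(s)\tau_0 - \chi_0(s)\dot\kappa_0 = -\dot\chi_0(s)\tau_0 - (1-s)\dot\kappa_0$, whereas you wrote $-\dot\chi_0(s)\tau_0 - \dot\kappa_0$, dropping the prefactor $\chi_0(s)=1-s$. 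The same prefactor is missing from $\ddot f_0$. This $(1-s)$ is precisely what produces the $s(s-1)$ in the answer.

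The error then compounds: after setting $\dot\chi_0=0$ the variance term should simply be ${\rm Var}\bigl(-(1-s)\dot\kappa_0,\,m_{-\tau_0}\bigr) = (1-s)^2\,{\rm Var}(\dot\kappa_0,m_{-\tau_0})$. Instead you write ${\rm Var}(-s\tau_0-\dot\kappa_0,m_{-\tau_0})$ — the $-s\tau_0$ has no business being there, since $s\tau_0$ does not depend on $t$ and contributes nothing to $\dot f_0$. This spurious $\tau_0$ forces you into an uncontrolled tangle of ${\rm Cov}(\tau_0,\dot\kappa_0)$ and ${\rm Var}(\tau_0)$, and you end up guessing an unproved identity (``I expect ${\rm Cov}(\tau_0,\dot\kappa_0)=-{\rm Var}(\dot\tau_0)$'') rather than closing the computation. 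Once the $(1-s)$ prefactors are restored, the second-order equation reads
\[
0 = (1-s)^2\,{\rm Var}(\dot\tau_0,m_{-\tau_0}) - \ddot\chi_0(s)\int\tau_0\,dm_{-\tau_0} - (1-s)\int\ddot\tau_0\,dm_{-\tau_0},
\]
and substituting $\int\ddot\tau_0\,dm_{-\tau_0}={\rm Var}(\dot\tau_0,m_{-\tau_0})$ gives $\ddot\chi_0(s) = \bigl((1-s)^2-(1-s)\bigr)\frac{{\rm Var}(\dot\tau_0,m_{-\tau_0})}{\int\tau_0\,dm_{-\tau_0}} = (s^2-s)\|\dot\rho_0\|^2$ directly, with no covariance identities needed.
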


\begin{proof}
For a fixed $t$, ${\cal C}(\rho_{0},\rho_{t})$ can be identified
as 
\[
\{(a,b):a,b\geq0,a+b\neq0,{\rm and\ }P(-a\tau_{0}-b\tau_{t})=0\}=\{(s,\chi_{t}(s)):\ P(-s\tau_{0}-\chi_{t}(s)\tau_{t})=0,s\in[0,1]\}.
\]
For convenience, let us denote $\varphi_{t}(s):=-s\tau_{0}-\chi_{t}(s)\tau_{t}$.
Since when $t=0,$ $\chi_{0}$ is a straight line satisfying $s+\chi_{0}(s)=1$,
we have $\vp_{0}=-\tau_{0}.$ Thus, we know $\dot{\vp}_{0}=-\dot{\chi}_{0}\tau_{0}-\chi_{0}\dot{\tau}_{0}$
and $\ddot{\vp}_{0}=-\ddot{\chi_{t}}\tau_{0}-2\dot{\chi}_{0}\dot{\tau}_{0}-\chi_{0}\ddot{\tau}_{0}$.
By Corollary \ref{cor:derivative-pressure}, we get
\begin{alignat*}{1}
0=\left.\frac{{\rm d}}{{\rm d}t}P(\vp_{t})\right|_{t=0} & =\int\dot{\vp}_{0}\dd m_{\vp_{0}}\\
 & =\int-\dot{\chi}_{0}\tau_{0}-\chi_{0}\dot{\tau}_{0}\dd m_{-\tau_{0}}\\
 & =-\dot{\chi}_{0}(s)\int\tau_{0}\dd m_{-\tau_{0}}-\chi_{0}(s)\int\dot{\tau}_{0}\dd m_{-\tau_{0}}.
\end{alignat*}
Since $\int\dot{\tau}_{0}\dd m_{-\tau_{0}}=0$ (because $P(-\tau_{t})=0$)
and $\int-\tau_{0}\dd m_{-\tau_{0}}<0$, we have $\dot{\chi}_{0}(s)=0,\ \forall s\in[0,1]$.
Furthermore, by taking the second derivative of pressure (as in the
proof of Theorem \ref{thm:Proof of ThmD}) we get\textcolor{black}{
\begin{flalign*}
0=\left.\frac{{\rm d}^{2}}{{\rm d}t^{2}}P(\vp_{t})\right|_{t=0} & ={\rm Var}(\dot{\vp}_{0},m_{\vp_{0}})+\int\ddot{\vp}_{0}\dd m_{\vp_{0}}\\
 & ={\rm Var}(\underset{\overset{\shortparallel}{0}}{\underbrace{-\dot{\chi}_{0}}}\tau_{0}-\chi_{0}\dot{\tau}_{0},m_{-\tau_{0}})-\int\left(\ddot{\chi_{0}}\tau_{0}+2\underset{\overset{\shortparallel}{0}}{\underbrace{\dot{\chi_{0}}}}\dot{\tau}_{0}+\chi_{0}\ddot{\tau}_{0}\right)\dd m_{-\tau_{0}}\\
 & =(\chi_{0}(s))^{2}\cdot{\rm Var(}\dot{\tau}_{0},m_{-\tau_{0}})-\ddot{\chi_{0}}\int\tau_{0}\dd m_{-\tau_{0}}-\chi_{0}\int\ddot{\tau}_{0}\dd m_{-\tau_{0}}.
\end{flalign*}
Notice that $P(-\tau_{t})=0,$ similarly we have}
\[
0=\left.\frac{{\rm d}^{2}P(-\tau_{t})}{{\rm d}t^{2}}\right|_{t=0}=-\int\ddot{\tau}_{0}\dd m_{-\tau_{0}}+{\rm Var}(-\dot{\tau}_{0},m_{-\tau_{0}}).
\]
Therefore, we have 
\begin{alignat*}{1}
\ddot{\chi}_{0}(s) & =(\chi_{0}(s)^{2}-\chi_{0}(s))\frac{{\rm Var(}\dot{\tau}_{0},m_{-\tau_{0}})}{\int\tau_{0}\dd m_{-\tau_{0}}}\\
 & =\left((1-s)^{2}-(1-s)\right))\frac{{\rm Var(}\dot{\tau}_{0},m_{-\tau_{0}})}{\int\tau_{0}\dd m_{-\tau_{0}}}=(s^{2}-s)||\dot{\rho}_{0}||_{{\rm }}^{2}.
\end{alignat*}
\end{proof}

\bibliographystyle{amsalpha}
\bibliography{/Users/nyima/Dropbox/TEX/Bibtex/BIB}

\end{document}